\newtheorem{thm}{Theorem}[section]
\newtheorem{prop}[thm]{Proposition}
\newtheorem{lem}[thm]{Lemma}
\newtheorem{cor}[thm]{Corollary}
\newtheorem{conj}[thm]{Conjecture}
\theoremstyle{definition}
\newtheorem{defn}[thm]{Definition}
\newtheorem{exa}[thm]{Example}
\theoremstyle{remark}
\newtheorem{rem}[thm]{Remark}
\newcommand{\op}{\operatorname}
\newcommand{\Gr}{\operatorname{Gr}}
\newcommand{\Char}{\op{char}}
\newcommand{\Span}{\op{Span}}
\newcommand{\Rep}{\op{Rep}}
\newcommand{\rank}{\op{rank}}
\newcommand{\Split}{\op{Split}_k}
\newcommand{\ed}{\op{ed}}
\newcommand{\Id}{\op{Id}}
\newcommand{\Spec}{\op{Spec}}
\newcommand{\trdeg}{\op{trdeg}}
\newcommand{\rk}{\op{rk}}
\newcommand{\GL}{\op{GL}}
\newcommand{\PGL}{\op{PGL}}
\newcommand{\Gal}{\op{Gal}}
\newcommand{\sep}[1]{#1_{\op{sep}}}
\newcommand{\alg}[1]{#1_{\op{alg}}}
\newcommand{\pcl}[1]{#1^{(p)}}
\newcommand{\Aut}{\op{Aut}}
\newcommand{\Br}{\op{Br}}
\newcommand{\Gm}{\op{\mathbb{G}}_m}
\newcommand{\Hom}{\op{Hom}}
\newcommand{\ind}{\op{ind}}
\newcommand{\Ex}{\op{Ex}}
\newcommand{\Ext}{\widetilde{\op{Ex}}}
\newcommand{\ZZ}{\mathbb{Z}}
\newcommand{\FF}{\mathbb{F}}
\newcommand{\Fields}{\op{Fields}}
\newcommand{\Sets}{\op{Sets}}
\newcommand{\NN}{\mathbb{N}}
\newcommand{\Diag}{\op{Diag}}
\newcommand{\QQ}{\mathbb{Q}}
\newcommand{\R}{\op{R}}
\newcommand{\LL}{(\mathbb{L})}
\newcommand{\Stab}{\op{Stab}}
\def\mathbi#1{\textbf{\em #1}}
\begin{document}
\title{Essential $\mathbi{p}$-dimension of algebraic tori}
\author[R. L\"otscher]{Roland L\"otscher$^{(1)}$}
\thanks{$^{(1)}$ Roland L\"otscher was partially supported by the Swiss National Science Foundation (Schweizerischer Nationalfonds).}
\author[M. M{\tiny{ac}}Donald]{Mark M{\tiny{ac}}Donald}
\author[A. Meyer]{Aurel Meyer$^{(2)}$}
\thanks{$^{(2)}$ Aurel Meyer was partially supported by
a University Graduate Fellowship at the University of British Columbia}
\author[Z. Reichstein]{Zinovy Reichstein$^{(3)}$}
\thanks{$^{(3)}$ Zinovy Reichstein was partially supported by
NSERC Discovery and Accelerator Supplement grants}

\subjclass[2000]{20G15}


\keywords{Essential dimension, algebraic torus, twisted finite group,
lattice}

\begin{abstract}
The essential dimension is a numerical invariant of an algebraic
group $G$ which may be thought of as a measure of complexity 
of $G$-torsors
 over fields.  A recent theorem of N. Karpenko and A. Merkurjev
 gives a simple formula for the essential dimension of
 a finite $p$-group. We obtain similar formulas
 for the essential $p$-dimension of a broader class
 of groups, which includes all algebraic tori.
\end{abstract}

\maketitle
\tableofcontents

\section{Introduction}
\label{sec:introduction} Throughout this paper $p$ will denote a
prime integer, $k$ a base field of characteristic $\ne p$ and $G$ 
a (not necessarily smooth)
algebraic group defined over $k$. Unless otherwise specified, all
fields are assumed to contain $k$ and all morphisms between them are
assumed to be $k$-homomorphisms.

We begin by recalling the notion of essential dimension
of a functor from~\cite{BF}. Let $\Fields_k$ be the category of
field extensions $K/k$, $\Sets$ be the category of sets,
and $F\colon\Fields_k \to \Sets$ be a covariant functor. 
As usual, given a field extension $k \subset
K_0 \subset K$, we will denote the image of
$\alpha \in F(K)$ under the natural map $F(K) \to F(L)$ by $\alpha_L$.

An object $\alpha \in F(K)$ is said to
\emph{descend} to an intermediate field $k \subseteq K_0 \subseteq K$ 
if $\alpha$ is in the image of the induced map $F(K_0) \to F(K)$.
The \emph{essential dimension} $\ed_k(\alpha)$ 
is defined as the minimum of the transcendence degrees $\trdeg_{k}(K)$ 
taken over all fields $k \subseteq K_0 \subseteq K$ such 
that $\alpha$ descends to $K_0$.  The
essential dimension $\ed_k(F)$ of the functor $F$ is defined as the maximal
value of $\ed_k(\alpha)$, where the maximum is taken over all fields
$K/k$ and all $\alpha \in F(K)$.

Of particular interest to us will be the Galois cohomology functor
$F_G \colonequals H^1(*, G)$, which associates 
to every $K/k$ the set of isomorphism classes of $G$-torsors over $\Spec(K)$.
The essential dimension of this functor is usually
called the {\em essential dimension of} $G$ and is denoted by
the symbol $\ed_k(G)$.  Informally speaking, this number may be thought of
a measure of complexity of $G$-torsors over fields. For example,
if $k$ is an algebraically closed field of characteristic $0$
then groups $G$ of essential dimension $0$ are precisely
the so-called {\em special groups}, i.e., algebraic groups $G/k$ with
the property that every $G$-torsor over $\Spec(K)$ is split,
for every field $K/k$. These groups were classified
by A. Grothendieck~\cite{grothendieck}.

For many groups the essential dimension is hard to compute, even
over the field $\mathbb C$ of complex numbers. The following related
notion is often more accessible. 
Let $F\colon\Fields_k \to \Sets$ be
a covariant functor and $p$ be a prime integer, as above.
The {\em essential $p$-dimension} of
$\alpha \in F(K)$, denoted $\ed_k(\alpha;p)$, 
is defined as the minimal value of
$\ed_k(\alpha_{K'})$, where $K'$ ranges over all finite field
extensions of $K$ whose degree is prime to $p$. 
The essential $p$-dimension of $F$, $\ed_k(F;
p)$ of $F$ is once again, defined as the maximal value of
$\ed_k(\alpha; p)$, where the maximum is taken over all fields $K/k$
and all $\alpha \in F(K)$, and once again we will write
$\ed_k(G; p)$ in place of $\ed_k(F_G; p)$, where
$F_G \colonequals H^1(\ast, G)$ is the Galois cohomology functor.

Note that $\ed_k(\alpha)$, $\ed_k(F)$, $\ed_k(G)$, $\ed_k(\alpha; p)$, etc.,
depend on $k$.
We will write $\ed$ instead of $\ed_k$ if the
reference to $k$ is clear from the context.
For background material on essential dimension
we refer the reader to~\cite{BR, Re, RY, BF, Me1}.

We also remark that in the case of the Galois cohomology functor
$F_G$, the maximal value of $\ed_k(\alpha)$ and $\ed_k(\alpha; p)$
in the above definitions is attained in the case where $\alpha$ is
a versal $G$-torsor in the sense of~\cite[Section I.5]{gms}.
Since every generically free linear representation $\rho \colon G \to \GL(V)$
gives rise to a versal $G$-torsor (see~\cite[Example I.5.4]{gms}), we
obtain the inequality
\begin{equation} \label{e.upper-bound}
\ed_k(G;p) \leq \ed_k(G)\leq \dim(V) - \dim(G) \, ;
\end{equation}
see~\cite[Therem 3.4]{Re} or~\cite[Lemma 4.11]{BF}.
(Recall that $\rho$ is called {\em generically free} if there exists
a $G$-invariant dense open subset $U \subset V$ such that
the scheme-theoretic stabilizer of every point of $U$ is trivial.)

N. Karpenko and A. Merkurjev~\cite{KM} recently showed that
the inequality~\eqref{e.upper-bound} is in fact sharp
for finite constant $p$-groups.

\begin{thm} \label{thm.km} Let $G$ be a constant $p$-group and
$k$ be a field containing a primitive $p$th root of unity.  Then
\[ \ed_k(G; p) = \ed_k(G) = \min \, \dim(V) \, , \]
where the minimum is taken over all faithful
$k$-representations $G \hookrightarrow \GL(V)$.
\end{thm}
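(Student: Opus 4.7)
The upper bound $\ed_k(G) \leq \min \dim V$ follows directly from \eqref{e.upper-bound}. Since $G$ is finite, $\dim G = 0$; and for any faithful representation $\rho\colon G \hookrightarrow \GL(V)$, the fixed subspace $V^g$ is a proper linear subspace of $V$ for every $g \neq 1$ (using $\Char k \neq p$), so the complement of the finite union $\bigcup_{g \neq 1} V^g$ is a dense open $G$-invariant subset of $V$ with trivial stabilizers, showing that $\rho$ is generically free.

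The real content is the matching lower bound $\ed_k(G;p) \geq \min \dim V$. My plan is to produce, from a versal $G$-torsor $E$ over some field $K$, a family of Brauer classes whose indices simultaneously bound $\ed_k(E;p)$ from below and, when the family is chosen optimally, equal $\min \dim V$. Let $C$ denote the center of $G$, which is a nontrivial $p$-group. To each character $\chi\colon C \to \mathbb{G}_m$ one associates the pushout central extension
$$1 \to \mathbb{G}_m \to G_\chi \to G/C \to 1,$$
built from $1 \to C \to G \to G/C \to 1$, with connecting map $\partial_\chi\colon H^1(K, G/C) \to H^2(K, \mathbb{G}_m) = \Br(K)$. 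Precomposing with $H^1(K,G) \to H^1(K,G/C)$ yields a Brauer class $\alpha_\chi(E) \in \Br(K)$ for each $G$-torsor $E/K$. Since $k$ is only assumed to contain a primitive $p$-th root of unity (not necessarily higher $p$-power roots), this construction should be read Galois-equivariantly, using Galois orbits of characters of $C$.

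The heart of the argument, and the step I expect to be the main obstacle, is the inequality
$$\ed_k(E;p) \;\geq\; \sum_{\chi \in S} \ind(\alpha_\chi(E))$$
for a versal $E$ and a suitable Galois-stable finite subset $S \subset \Hom(C, \mathbb{G}_m)$. The natural route is a canonical $p$-dimension / incompressibility argument, reducing this to a statement about the canonical $p$-dimension of a product of Severi--Brauer varieties associated with the $\alpha_\chi(E)$; the delicate point is showing that these varieties are sufficiently ``independent'' that their canonical dimensions add up.

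Once this Brauer-theoretic lower bound is in hand, the conclusion is representation-theoretic bookkeeping. For a versal $E$ one has $\dim V_\chi = \ind(\alpha_\chi(E))$, where $V_\chi$ is the irreducible $G$-representation with central character $\chi$; and a direct sum $V = \bigoplus_{\chi \in S} V_\chi$ is a faithful $G$-representation precisely when the characters in $S$ jointly separate points of $C$. Minimizing the right-hand side over such Galois-stable sets $S$ therefore recovers $\min \dim V$, completing the lower bound and matching the upper bound established at the outset.
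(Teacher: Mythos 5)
Your upper bound is complete and correct, and your outline of the lower bound faithfully reproduces the skeleton of the Karpenko--Merkurjev argument. Note, however, that the paper does not prove Theorem~\ref{thm.km} at all: it quotes it from \cite{KM}, and the only related machinery it develops (the proof of Theorem~\ref{thm:lowerbound}) invokes the key inequality $\ed_k(G;p)\ge\sum_i\ind\beta(\chi_i)-\dim G$ as a citation to \cite[Theorem 4.8]{Me1} together with \cite[Theorem 4.4]{KM}. That inequality is exactly the step you flag as ``the main obstacle'' and then do not prove, and it is the entire mathematical content of the theorem. Establishing it requires (i) Merkurjev's reduction of the essential $p$-dimension of the fiber of $H^1(-,G)\to H^1(-,G/C)$ (a gerbe banded by $C\simeq\mu_p^r$) to the canonical $p$-dimension of the product of Severi--Brauer varieties of the $\beta(\chi_i)$, and (ii) Karpenko's incompressibility theorem computing that canonical $p$-dimension as $\sum_i(\ind\beta(\chi_i)-1)$ when the $\chi_i$ are chosen so that the classes are ``$p$-independent''; the latter rests on Steenrod operations on Chow groups and is not something one can wave at via ``sufficient independence.'' As written, the proposal is a correct roadmap with the central theorem missing.

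Two smaller points you would need to repair even granting that inequality. First, $C$ must be taken to be the $p$-torsion of the center, $Z(G)[p]\simeq\mu_p^r$ (the socle), not the full center: Merkurjev's lower bound is stated for $C\simeq\mu_p^r$, and the passage from ``faithful on $G$'' to ``the central characters span $C^\ast$'' uses that every nontrivial normal subgroup of a $p$-group meets $Z(G)[p]$. Second, the identity you want is $\ind\beta(\chi)=\gcd\{\dim\phi:\phi\in\Rep^{\chi}\}$, not ``$=\dim V_\chi$'' for a single irreducible; to convert the $\gcd$ into a $\min$ you must observe that all irreducible representations of a $p$-group over a field containing $\zeta_p$ have $p$-power dimension (this is precisely the hypothesis of Theorem~\ref{thm:lowerbound}, verified in the paper's Lemma~\ref{lem3.2a}). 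Finally, the bookkeeping step that the minimum of $\sum_{i=1}^r\dim\rho_i$ over bases $(\chi_1,\dots,\chi_r)$ of $C^\ast$ equals the minimum of $\dim V$ over all faithful representations is itself a lemma (compare Lemma~\ref{lem.additive}), not a tautology, since an arbitrary faithful representation may have more than $r$ irreducible summands.
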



The goal of this paper is to prove similar formulas
for a broader class of groups $G$. To state our first result, 
let
\begin{equation} \label{e.exact-sequence}
1 \to C \to G \to Q \to 1
\end{equation}
be an exact sequence of algebraic groups over $k$ such that
$C$ is central in $G$ and isomorphic to $\mu_p^r$ for
some $r \ge 0$.
Given a character $\chi \colon C \to \mu_p$, we will, following \cite{KM}, 
denote by $\Rep^{\chi}$ the set of irreducible 
representations $\phi \colon G \to \GL(V)$, defined 
over $k$, such that $\phi(c) = \chi(c) \Id_V$ for every $c \in C$.

\begin{thm} \label{thm:lowerbound}
Assume that $k$ is a field of characteristic $\ne p$
containing a primitive $p$th root of unity. 
Suppose a sequence of $k$-groups of the
form~\eqref{e.exact-sequence} satisfies the following condition:
\[ \gcd \{ \dim(\phi) \, | \, \phi \in \Rep^{\chi} \}
= \min \{ \dim(\phi) \, | \, \phi \in \Rep^{\chi} \} \]
for every character $\chi \colon C \to \mu_p$. (Here, as usual,
$\gcd$ stands for the greatest common divisor.) Then
\[ \ed_k(G;p) \geq \min \dim(\rho) - \dim G \, , \]
where the minimum is taken over
all finite-dimensional $k$-representations $\rho$ of $G$
such that $\rho_{| \, C}$ is faithful.
\end{thm}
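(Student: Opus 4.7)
The plan is to follow the Karpenko--Merkurjev strategy behind Theorem~\ref{thm.km}, suitably adapted to handle a central $\mu_p^r$ (rather than the full center of a constant $p$-group), a possibly positive-dimensional quotient $Q$, and the gcd hypothesis. The first step is to reduce to a single versal $G$-torsor $T$: fix a generically free representation $V$ of $G$ and take $T$ to be the generic $G$-torsor over $K = k(V)^G$. Since every $G$-torsor over a field containing $k$ is a specialization of $T$, it suffices to prove $\ed_k(T;p) \ge \min \dim \rho - \dim G$ for this particular $T$.

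Next, for each nontrivial character $\chi \colon C \to \mu_p$ I would attach a Brauer class $\beta_\chi(T) \in \Br(K)[p]$ built from the pushout extension $1 \to \mu_p \to G_\chi \to Q \to 1$ and the projective representations $\overline\phi \colon Q \to \PGL(V_\phi)$ arising from each $\phi \in \Rep^\chi$. The defining property should be that $\ind(\beta_\chi(T))$ divides $\dim \phi$ for every $\phi \in \Rep^\chi$ defined over $K$; combined with the hypothesis that $\gcd = \min$, this gives $\ind(\beta_\chi(T)) \le \min\{\dim \phi : \phi \in \Rep^\chi\}$, and a genericity argument should upgrade the divisibility bound to an equality for the versal torsor. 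Fixing characters $\chi_1, \ldots, \chi_r$ forming a basis of $C^* \cong (\ZZ/p)^r$, the resulting classes $\beta_{\chi_1}, \ldots, \beta_{\chi_r}$ will be independent in $\Br(K)[p]$.

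The lower bound then follows by applying Karpenko's incompressibility theorem for products of Severi--Brauer varieties of $p$-primary index to $X = \op{SB}(\beta_{\chi_1}) \times \cdots \times \op{SB}(\beta_{\chi_r})$: any compression of $T$ to a subfield $K_0 \subseteq K$ restricts all $\beta_{\chi_i}$ to $K_0$, bounding $\trdeg_k(K_0)$ below by $\cdim_p(X) = \sum_i \ind(\beta_{\chi_i}) - r$. A dimension count accounting for the $\dim Q = \dim G$ continuous parameters coming from the fiber over $H^1(K,Q)$ supplies the $-\dim G$ correction. A combinatorial optimization then identifies $\sum_i \ind(\beta_{\chi_i})$ (minimized over the choice of basis of $C^*$) with $\min \dim \rho$ as $\rho$ ranges over representations with $\rho|_C$ faithful, yielding the required inequality.

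The main technical obstacle I expect is the construction of $\beta_\chi(T)$ and the verification that its index equals $\min\{\dim \phi : \phi \in \Rep^\chi\}$ for the versal torsor. The naive connecting map $H^1(K, Q) \to H^2(K, \mu_p)$ vanishes on the image of $H^1(K, G)$, so one must use a more refined construction (for example a secondary cohomological invariant attached to the versal torsor, or a cohomological class arising from $BG$ viewed as a $\mu_p^r$-gerbe over $BQ$). A second subtle point is the degenerate case in which some $\min\{\dim \phi\}$ equals $1$, so that the corresponding Severi--Brauer variety is a point and contributes nothing; to recover the full bound one must then incorporate classes in $H^1(K,\mu_p)$ in addition to Brauer classes. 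Finally, tracking the $-\dim G$ correction rigorously in the passage from $\cdim_p(X)$ to $\ed_k(T;p)$ requires care to ensure that exactly $\dim G$ and no more is lost in the dimension count.
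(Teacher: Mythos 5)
Your overall strategy---Brauer classes indexed by a basis of $C^*$, an index computation, and the $\gcd=\min$ hypothesis to convert indices into dimensions of actual representations---is indeed the Karpenko--Merkurjev strategy that the paper's proof rests on. But there is a genuine gap at the foundational step, and you have put your finger on it yourself: the classes $\beta_\chi$ cannot be attached to a versal $G$-torsor. The composite $H^1(K,G)\to H^1(K,Q)\xrightarrow{\partial} H^2(K,C)$ is zero by exactness, so any Brauer class obtained this way from a torsor in the image of $H^1(K,G)$ is trivial, and your compression argument (``any compression of $T$ to $K_0$ restricts all $\beta_{\chi_i}$'') has nothing to restrict. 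The correct move, which the paper makes, is to start instead from a versal \emph{$Q$-torsor} $E\to\Spec K$ and set $\beta(\chi)=\chi_*(\partial E)$; these classes are nontrivial precisely because $E$ need not lift to $G$. The link back to $\ed_k(G;p)$ is then not a compression of a versal $G$-torsor but Merkurjev's theorem on the essential dimension of the gerbe given by the fiber of $H^1(*,G)\to H^1(*,Q)$ over $E$: \cite[Theorem 4.8, Example 3.7]{Me1} yields directly that for a suitable basis $\chi_1,\dotsc,\chi_r$ of $C^*$ one has $\ed_k(G;p)\ge\sum_i\ind\beta(\chi_i)-\dim G$. Your parenthetical suggestion of viewing $BG$ as a $\mu_p^r$-gerbe over $BQ$ is the right idea, but it does not patch your argument---it replaces it---and it is left undeveloped.

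Two of your other worries dissolve once this is done, and one remains. The $-\dim G$ correction and the degenerate index-one case are both absorbed into Merkurjev's statement: in the gerbe formulation the count comes out to $\sum_i\ind\beta(\chi_i)$, so a split class still contributes $1$ rather than $0$, as it must, since a character $\chi_i$ admitting a one-dimensional $\phi\in\Rep^{\chi_i}$ still forces a one-dimensional summand into $\rho$. What does remain essential is the exact index computation $\ind\beta(\chi)=\gcd\{\dim\phi \mid \phi\in\Rep^{\chi}\}$, which you defer to ``a genericity argument''; this is \cite[Theorem 4.4, Remark 4.5]{KM} and is the nontrivial computational input---the divisibility $\ind\beta(\chi)\mid\dim\phi$ alone gives only $\ind\le\gcd$, which is the wrong direction for a lower bound on $\ed$. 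With these two citations in place the paper's proof is three lines: choose $\rho_i\in\Rep^{\chi_i}$ with $\dim\rho_i=\ind\beta(\chi_i)$ (possible since $\gcd=\min$), note that $\rho_1\oplus\dotsb\oplus\rho_r$ is faithful on $C$ because the $\chi_i$ form a basis of $C^*$, and conclude $\sum_i\ind\beta(\chi_i)=\dim(\rho_1\oplus\dotsb\oplus\rho_r)\ge\min\dim\rho$.
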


Of particular interest to us will be extensions of finite 
$p$-groups by algebraic tori, i.e., $k$-groups $G$ which
fit into an exact sequence of the form
\begin{equation} \label{e.basic-sequence}
 1 \to T \to G \to F \to 1 \, , 
\end{equation}
where $F$ is a finite $p$-group and $T$ is a torus over $k$.
Note that in this paper we will view finite groups $F$ 
as algebraic groups over $k$, and will not assume they are constant, 
which is to say, the absolute Galois group of $k$ may 
act non-trivially on the separable points of $G$. 
For the sake of computing $\ed_k(G; p)$ we may assume that $k$ is a
$p$-closed field (as in Definition \ref{def.p-closure}); see
Lemma~\ref{lem:edPrimeToPClosure}. 
In this situation we will show that

\smallskip
(i) there is a natural choice of a split central subgroup $C \subset G$
in the sequence~\eqref{e.exact-sequence} such that

\smallskip
(ii) the conditions of Theorem~\ref{thm:lowerbound} are always
satisfied. 

\smallskip
(iii) Moreover, if $G$ is isomorphic to the direct product
of a torus and a finite twisted $p$-group, then
a variant of~\eqref{e.upper-bound} yields
an upper bound, matching the lower bound of
Theorem~\ref{thm:lowerbound}.

\smallskip
This brings us to the main result of this paper.
We will say that a representation $\rho\colon G \to \GL(V)$
of an algebraic group $G$ is $p$-faithful if its kernel
is finite and of order prime to $p$.

\begin{thm}
\label{thm:MainTheorem1} Let $G$ be an extension of a (twisted)
finite $p$-group $F$ by an algebraic torus $T$
defined over a field $k$ (of characteristic not $p$).
In other words, we have an exact sequence
\[ 1 \to T \to G \to F \to 1 \, . \]
Denote a $p$-closure of $k$ by
$\pcl{k}$  (see Definition~\ref{def.p-closure}).
Then 

\smallskip
(a) $\ed_k(G;p) \ge \min \dim (\rho) - \dim G$,
where the minimum is taken over all $p$-faithful linear
representations $\rho$ of $G_{\pcl{k}}$ over $\pcl{k}$.

\smallskip
\noindent
Now assume that $G$ is the direct product of $T$ and $F$.
Then 

\smallskip
(b) equality holds in (a), and

\smallskip
(c) over $k^{(p)}$ the absolute essential dimension of $G$
and the essential $p$-dimension coincide:
\[ \ed_{k^{(p)}}(G_{k^{(p)}})=\ed_{k^{(p)}}(G_{k^{(p)}};p)=\ed_k(G;p) .\]
\end{thm}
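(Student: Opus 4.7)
My plan is to apply Theorem~\ref{thm:lowerbound} to a carefully chosen split central $p$-subgroup $C$ of $G$, convert the resulting lower bound into one involving $p$-faithful representations, and finally produce a matching upper bound in the product case.

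For part~(a), I would first invoke Lemma~\ref{lem:edPrimeToPClosure} to reduce to the case $k = \pcl{k}$, so that $T$ is split. The natural candidate for $C$ is the maximal split central $p$-torsion subgroup of $G_{\pcl{k}}$, which one can identify with the $F$-invariants of $T[p] \cong \mu_p^{\dim T}$ (together with any central $p$-torsion coming from $F$). The crucial input is the gcd hypothesis of Theorem~\ref{thm:lowerbound}: decomposing $\Rep^\chi$ via Clifford theory relative to the normal torus $T$, any irreducible $\phi \in \Rep^\chi$ restricts to $T$ as a sum of characters forming a single $F$-orbit, and since $F$ is a $p$-group every such orbit has size a power of $p$. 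Consequently every dimension in $\Rep^\chi$ is a $p$-power multiple of the smallest, so $\gcd = \min$. Theorem~\ref{thm:lowerbound} then gives $\ed_k(G;p) \ge \min_\rho \dim(\rho) - \dim G$, where the minimum is over representations of $G_{\pcl{k}}$ faithful on $C$. It remains to identify this minimum with the minimum over $p$-faithful representations. One inclusion is automatic since $C$ is a $p$-group; for the converse, I would argue that a minimal representation faithful on $C$ may be taken $p$-faithful by showing that any residual $p$-part of $\ker \rho$ (necessarily mapping nontrivially to $F$) can be eliminated without enlarging the dimension, using the structure of $G$ as an extension of a $p$-group by a torus.

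For part~(b), take a minimal $p$-faithful representation $\rho$ of $G_{\pcl{k}}$ of dimension $d$. Its kernel $K$ lies in $T$ and has order prime to $p$, so the induced representation of $G_{\pcl{k}}/K = (T/K) \times F$ is faithful of dimension $d$. Since $G_{\pcl{k}}/K$ is again a product of a torus and a finite $p$-group, a standard argument upgrades this to a generically free representation of the same dimension, and~\eqref{e.upper-bound} yields $\ed_{\pcl{k}}(G_{\pcl{k}}/K) \le d - \dim G$. The central isogeny $G \to G/K$ of degree prime to $p$ then gives
\[
\ed_k(G;p) \;=\; \ed_{\pcl{k}}(G_{\pcl{k}};p) \;=\; \ed_{\pcl{k}}(G_{\pcl{k}}/K;p) \;\le\; \ed_{\pcl{k}}(G_{\pcl{k}}/K) \;\le\; d - \dim G,
\]
matching the lower bound from~(a).

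For part~(c), one observes that over the $p$-closed field $k^{(p)}$ the construction in (b) can be refined to produce a generically free representation of $G_{k^{(p)}}$ itself of dimension $d$ (for instance by checking that, for $G = T \times F$ over a $p$-closed field, a minimal $p$-faithful representation is already faithful and generically free, since any prime-to-$p$ kernel sitting in the split torus $T$ can be eliminated by rechoosing characters without enlarging the dimension). This yields $\ed_{k^{(p)}}(G_{k^{(p)}}) \le d - \dim G$, and combined with $\ed_k(G;p) = \ed_{k^{(p)}}(G_{k^{(p)}};p) \le \ed_{k^{(p)}}(G_{k^{(p)}})$ and the lower bound from~(a), all three values coincide. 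The main obstacle I anticipate is the passage from ``faithful on $C$'' to ``$p$-faithful'' in part~(a), and, relatedly, ensuring that the upper-bound representations in parts~(b) and~(c) are genuinely generically free rather than merely faithful.
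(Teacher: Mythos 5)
Your overall strategy for (a) and (b) tracks the paper's, but there are concrete gaps. The recurring false premise is that $T$ becomes split over $\pcl{k}$: a $p$-closed field still admits Galois extensions of $p$-power degree, so $T$ need not split there (this is the entire content of Theorem~\ref{thm:MainTheorem2}). This undermines your Clifford-theoretic verification of $\gcd=\min$: over $\pcl{k}$ the restriction of an irreducible $\phi$ to $T$ need not decompose into characters defined over the base field, and even after passing to $\sep{k}$ you control only the size of the orbit of characters (a power of $p$), not the common multiplicity $e$ or the Schur index, so ``every dimension in $\Rep^\chi$ is a $p$-power multiple of the smallest'' does not follow from what you wrote. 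The paper instead proves the stronger Proposition~\ref{prop3.2} (every irreducible representation of $G$ over $\pcl{k}$ has $p$-power dimension) by exhibiting a Zariski-dense ascending union of finite $p$-subgroups of $G$; this rests on the nontrivial Lemma~\ref{lem.schneider} that the extension $1\to T\to G\to F\to 1$ contains a finite $p$-subgroup mapping onto $F$, an ingredient entirely absent from your sketch. Likewise, the passage from ``faithful on $C$'' to ``$p$-faithful'' is not a matter of eliminating a residual $p$-part of the kernel: Proposition~\ref{prop.C(G)} shows there is none — any normal subgroup meeting $C(G)$ trivially is automatically finite of order prime to $p$ — and proving this requires a fixed-point count for the action of the $p$-group $G/G^0$ together with Schneider's splitting theorem. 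You correctly identify this as the main obstacle but supply no mechanism for it.

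Part (c) as proposed does not work. The claim that a minimal $p$-faithful representation of $T\times F$ over a $p$-closed field is already faithful is refuted by the paper's own Remark~\ref{rem.p-faithful}: for $I$ a non-principal ideal of $\ZZ[\zeta_p]$, the torus $\Diag(I)$ has a $p$-dimensional $p$-faithful representation but no $p$-dimensional faithful one, so the prime-to-$p$ kernel cannot in general be ``eliminated by rechoosing characters.'' The paper's actual route is different: the character lattices of $T$ and of $T/\ker\rho$ are isomorphic after localization at $p$ and hence lie in the same genus (as $\Gal(\sep{k}/\pcl{k})$ is pro-$p$), and tori whose lattices lie in the same genus have equal \emph{absolute} essential dimension by Ro\v{\i}ter's theorem together with an auxiliary torus of essential dimension $0$ (Proposition~\ref{prop.equalED}, Corollary~\ref{cor.absoluteED}); combining this with the additivity Theorem~\ref{thm.additive} gives (c). Part (b) of your proposal is essentially the paper's argument (Lemma~\ref{lem.rigid} plus the $p$-isogeny invariance of Theorem~\ref{thm:edQuotient}) and is fine once (a) is in place.
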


If $G$ is a $p$-group, a representation $\rho$ is
$p$-faithful if and only if it is faithful. However, for
an algebraic torus, ``$p$-faithful" cannot be replaced by
``faithful"; see~Remark~\ref{rem.p-faithful}.

Theorem~\ref{thm:MainTheorem1} appears to be new
even in the case where $G$ is a twisted cyclic $p$-group,
where it extends earlier work of Rost~\cite{Ro}, Bayarmagnai~\cite{Ba} and
Florence~\cite{Fl}; see Corollary~\ref{cor.cyclic} 
and Remark~\ref{rem.cyclic}.

%

\smallskip 
If $G$ a direct product of a torus and an abelian 
$p$-group, the value of $\ed_k(G; p)$ given by Theorem~\ref{thm:MainTheorem1}
can be rewritten in terms of the character module $X(G)$; see
Corollary~\ref{cor.lattice}. In particular, we obtain the following 
formula for the essential dimension of a torus.

\begin{thm} \label{thm:MainTheorem2} 
Let $T$ be an algebraic torus defined over a $p$-closed
field $k = \pcl{k}$ of characteristic
$\ne p$. Suppose $\Gamma = \Gal(\sep{k}/k)$ acts 
on the character lattice $X(T)$ via a finite quotient 
$\overline{\Gamma}$. Then
\[ \ed_k(T) = \ed_k(T; p) = \min \rank(L) \, , \]
where the minimum is taken over all exact sequences
of $\ZZ_{(p)}\overline{\Gamma}$-lattices of the form
\[ (0) \to L \to P \to X(T)_{(p)} \to (0)  \, , \] 
where $P$ is permutation and  
$X(T)_{(p)}$ stands for $X(T) \otimes_{\ZZ} \ZZ_{(p)}$.
\end{thm}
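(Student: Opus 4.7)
The plan is to deduce Theorem~\ref{thm:MainTheorem2} directly from Theorem~\ref{thm:MainTheorem1} applied to $G=T$, viewed as an extension of the trivial finite $p$-group $F=1$ by the torus $T$. Then $G$ is a (trivial) direct product $T\times F$, so parts (b) and (c) of that theorem apply and give
\[
\ed_k(T)=\ed_k(T;p)=\min\dim(\rho)-\dim(T),
\]
where the minimum ranges over all $p$-faithful representations $\rho$ of $T$ over $k=\pcl{k}$. The remaining task is to translate this minimum into the minimum of $\rank(L)$ over resolutions of $X(T)_{(p)}$ by permutation $\ZZ_{(p)}\overline{\Gamma}$-lattices.

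For this, I would use the standard dictionary between $T$-representations and character modules. Since $k=\pcl{k}$, the Galois group $\Gamma=\Gal(\sep{k}/k)$ acts on $X(T)$ through $\overline{\Gamma}$, and a $k$-representation $\rho\colon T\to \GL(V)$ of dimension $n$ is equivalent to a multiset of $n$ weights in $X(T)$ stable under $\overline{\Gamma}$. Equivalently, $\rho$ corresponds to a permutation $\ZZ\overline{\Gamma}$-lattice $\widetilde{P}$ of rank $n$ together with a $\overline{\Gamma}$-equivariant homomorphism $\widetilde{\phi}\colon\widetilde{P}\to X(T)$ sending each basis element to the corresponding weight. The kernel of $\rho$ is the diagonalizable group with character module $X(T)/\Ima(\widetilde{\phi})$, so $\rho$ is $p$-faithful if and only if the induced map $\widetilde{\phi}\otimes\ZZ_{(p)}\colon \widetilde{P}_{(p)}\to X(T)_{(p)}$ is surjective. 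Setting $L\colonequals\ker(\widetilde{\phi}\otimes\ZZ_{(p)})$, we have $\rank(L)=\rank(\widetilde{P})-\dim T$, so the proof reduces to the identity
\[
\min\{\rank(\widetilde{P})\}=\min\{\rank(P)\},
\]
where the left minimum runs over permutation $\ZZ\overline{\Gamma}$-lattices $\widetilde{P}$ admitting a $\overline{\Gamma}$-equivariant map to $X(T)$ which becomes surjective after $-\otimes\ZZ_{(p)}$, and the right minimum runs over permutation $\ZZ_{(p)}\overline{\Gamma}$-lattices $P$ admitting a surjection onto $X(T)_{(p)}$.

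The inequality $\ge$ is immediate by tensoring with $\ZZ_{(p)}$. For the reverse inequality, which I expect to be the only technical point, I would start with $P=\bigoplus_i \ZZ_{(p)}[\overline{\Gamma}/H_i]$ and a surjection $\phi\colon P\twoheadrightarrow X(T)_{(p)}$, encoded by elements $z_i\colonequals\phi(eH_i)\in (X(T)_{(p)})^{H_i}=X(T)^{H_i}\otimes\ZZ_{(p)}$ (using that $H_i$-invariants commute with flat localization). Writing $z_i=y_i/n_i$ with $y_i\in X(T)^{H_i}$ and $n_i$ prime to $p$, I set $\widetilde{P}=\bigoplus_i \ZZ[\overline{\Gamma}/H_i]$ and define $\widetilde{\phi}(eH_i)=y_i$. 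Since each $n_i$ is a unit in $\ZZ_{(p)}$, the image of $\widetilde{\phi}\otimes\ZZ_{(p)}$ coincides with the $\ZZ_{(p)}\overline{\Gamma}$-span of $\{n_i z_i\}=\{z_i\}$, hence equals $X(T)_{(p)}$; so $\widetilde{\phi}$ produces a $p$-faithful representation of the same dimension $\rank(\widetilde{P})=\rank(P)$. The main obstacle is essentially this denominator-clearing step, which must simultaneously preserve $\overline{\Gamma}$-equivariance, the permutation structure, and surjectivity after localization; the argument above handles all three because each basis orbit can be rescaled independently by a unit of $\ZZ_{(p)}$. Combining these identifications with part (a) of Theorem~\ref{thm:MainTheorem1} yields the claimed formula.
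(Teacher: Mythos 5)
Your proposal is correct and follows essentially the same route as the paper: the paper deduces Theorem~\ref{thm:MainTheorem2} from Theorem~\ref{thm:MainTheorem1} (via Corollaries~\ref{cor.absoluteED} and~\ref{cor.lattice}) using exactly the same dictionary between $p$-faithful representations of $T$ and $\overline{\Gamma}$-stable weight multisets, i.e.\ permutation lattices mapping to $X(T)$ with cokernel finite of order prime to $p$. Your per-orbit denominator-clearing step ($z_i=y_i/n_i$ with $n_i$ prime to $p$) is the same device the paper uses with a single common denominator $m$, so the two arguments coincide in substance.
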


In many cases Theorem~\ref{thm:MainTheorem2} renders the value of 
$\ed_k(T)$ computable by known representation-theoretic methods,
e.g., from~\cite{CR}. We will give several examples of such computations 
in Sections~\ref{sect.ed<=1} and~\ref{sect.splitsquare}.
Another application was recently given by Merkurjev (unpublished),
who used Theorem~\ref{thm:MainTheorem2}, in combination 
with techniques from~\cite{Me2}, to show that 
\[ \ed_k(\PGL_{p^r}; p) \ge (r-1)p^r + 1 \]
for any $r \ge 1$.  
(For $r = 2$ the above inequality is the main result of~\cite{Me2}.) 
This represents dramatic improvement over the best previously known 
lower bounds on $\ed_k(\PGL_{p^r})$.  The question of 
computing $\ed_k(\PGL_{p^r})$ is a long-standing open problem; 
for an overview, see~\cite{mr1, mr2}.


%
%
It is natural to try to extend the formula 
of Theorem~\ref{thm:MainTheorem1}(b)
to all $k$-groups $G$, whose connected component $G^0$ is a torus.
For example, the normalizer of a maximal torus in 
any reductive $k$-group is of this form.  For the purpose 
of computing $\ed_k(G; p)$ we may assume that $k$ is 
$p$-closed and $G/G^0$ is a $p$-group; in other words,
$G$ is as in Theorem~\ref{thm:MainTheorem1}(a).  Then
\begin{equation} \label{e.conjecture}
\min \dim \mu - \dim(G) \le \ed(G;p) \le \min \dim \rho - \dim G \, ,  
\end{equation}
where the two minima are taken over all $p$-faithful 
representations $\mu$, and
$p$-generically free representations $\rho$,
respectively. Here we say that a representation 
$\rho$ of $G$ is $p$\textit{-generically free} if
the $\ker(\rho)$ is finite of order prime to $p$, and $\rho$ descends
to a generically free representation of $G/\ker(\rho)$. The 
upper bound in~\eqref{e.conjecture}
follows from \eqref{e.upper-bound}, in combination with
Theorem~\ref{thm:edQuotient}; the lower bound 
is Theorem~\ref{thm:MainTheorem1}(a).
If $G$ is a direct product of a torus and a $p$-group, 
then every $p$-generically free representation 
is $p$-faithful (see Lemma~\ref{lem.rigid}). In this case 
the lower and upper bounds of~\eqref{e.conjecture} 
coincide, yielding the exact value of $\ed_k(G; p)$
of Theorem~\ref{thm:MainTheorem1}(b).
However, if we only assume $G$ is a $p$-group 
extended by a torus, then faithful
$G$-representations no longer need to be generically free. 
We do not know how to bridge the gap between the upper 
and the lower bound in~\eqref{e.conjecture} in this generality;
however, in all of the specific
examples we have considered, the upper bound turned out to be sharp.
We thus put forward the following conjecture.

\begin{conj} \label{conjecture}
Let $G$ be an extension of a $p$-group by a torus, defined over
a field $k$ of characteristic $\ne p$. 
Then \[ \ed(G;p)=\min \dim \rho - \dim G, \] 
where the minimum is taken over all $p$-generically free
representations $\rho$ of $G_{\pcl{k}}$ over $\pcl{k}$.
\end{conj}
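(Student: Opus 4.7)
After reducing to a $p$-closed field via Lemma~\ref{lem:edPrimeToPClosure}, the upper bound
\[ \ed(G;p) \le \min \dim \rho - \dim G \]
over $p$-generically free representations of $G_{\pcl k}$ is already in hand: it is the right-hand inequality in~\eqref{e.conjecture}, deduced from~\eqref{e.upper-bound} together with Theorem~\ref{thm:edQuotient}. The remaining task is the matching lower bound. Theorem~\ref{thm:MainTheorem1}(a) supplies a lower bound of exactly the same shape, but with the minimum taken over the strictly larger class of $p$-\emph{faithful} representations. Writing
\[ m_{\mathrm{f}} = \min\{\dim\mu : \mu \text{ is } p\text{-faithful}\}, \qquad m_{\mathrm{gf}} = \min\{\dim\rho : \rho \text{ is } p\text{-generically free}\}, \]
we therefore have $m_{\mathrm{f}} \le m_{\mathrm{gf}}$ and
\[ m_{\mathrm{f}} - \dim G \;\le\; \ed(G;p) \;\le\; m_{\mathrm{gf}} - \dim G. \]
The conjecture is thus equivalent to the purely representation-theoretic equality $m_{\mathrm{f}} = m_{\mathrm{gf}}$.

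My plan is to prove this equality by starting with a $p$-faithful representation $\mu\colon G \to \GL(V)$ of minimal dimension, setting $N = \ker\mu$ (a finite $k$-group of order prime to $p$), and analyzing the generic stabilizer $H \subset G/N$ of the resulting faithful linear action on $V$. Since $G/N$ is again an extension of a finite $p$-group $F$ by the torus $T$, and a torus always acts with trivial generic stabilizer on any faithful linear representation, the composition $H \hookrightarrow G/N \twoheadrightarrow F$ is injective; consequently $H$ is a finite $p$-group. If $H$ is trivial, $\mu$ is already $p$-generically free and we are done. Otherwise, the idea is to produce a quotient representation $V \twoheadrightarrow V'$ realizing a nontrivial central character of $H$, and to argue that the resulting representation of $G$ on $V'$ is still $p$-faithful yet has strictly smaller dimension, contradicting the minimality of $\mu$.

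In the split case $G = T \times F$ covered by Theorem~\ref{thm:MainTheorem1}(b), no such delicate surgery is needed: Lemma~\ref{lem.rigid} shows a priori that every $p$-faithful representation is automatically $p$-generically free, so $m_{\mathrm{f}} = m_{\mathrm{gf}}$ is immediate. For a non-split extension, however, the rigidity argument fails, and the main obstacle is precisely that a minimal $p$-faithful $\mu$ may have a nontrivial generic stabilizer $H$ with no obvious route to shrinking the representation while retaining $p$-faithfulness—the offending $H$-invariants need not be shavable without losing faithfulness on some component of the torus $T$ or on some character of the central $\mu_p^r$ exploited in Theorem~\ref{thm:lowerbound}. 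Bridging this gap likely demands either a sharpening of the Karpenko--Merkurjev-style lower bound in Theorem~\ref{thm:lowerbound} that directly incorporates generic-freeness on the central character strata, or an extension of the lattice-theoretic description of $\ed(G;p)$ underlying Theorem~\ref{thm:MainTheorem2} to the nonabelian setting, neither of which is currently available; for this reason the statement remains a conjecture rather than a theorem.
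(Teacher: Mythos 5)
The statement you were asked to prove is Conjecture~\ref{conjecture}, and the paper does not prove it: immediately before stating it, the authors record precisely the sandwich~\eqref{e.conjecture} — the $p$-faithful minimum below, the $p$-generically free minimum above — and write that they ``do not know how to bridge the gap between the upper and the lower bound in this generality.'' Your proposal reconstructs this state of affairs accurately: the upper bound from~\eqref{e.upper-bound} combined with Theorem~\ref{thm:edQuotient}, the lower bound from Theorem~\ref{thm:MainTheorem1}(a), the observation that Lemma~\ref{lem.rigid} closes the gap when $G = T\times F$, and an honest admission that the general case is open. Since there is no proof in the paper to compare against, your refusal to claim one is the correct outcome, and your identification of where the difficulty lies matches the authors' own discussion.

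Two corrections to your framing. First, the conjecture is not \emph{equivalent} to the equality $m_{\mathrm{f}} = m_{\mathrm{gf}}$; that equality is sufficient but not necessary. If $m_{\mathrm{f}} < m_{\mathrm{gf}}$, the conjecture could still hold with $\ed(G;p)$ sitting at the top of the sandwich, the $p$-faithful lower bound simply failing to be sharp — so a proof could just as well proceed by strengthening the lower bound of Theorem~\ref{thm:lowerbound} (e.g.\ by an argument sensitive to generic freeness, as you suggest at the end) rather than by showing the two minima coincide. Second, your ``surgery'' sketch has the structure of a proof that a \emph{minimal} $p$-faithful representation is automatically $p$-generically free; even granting the step you flag as problematic, this would only establish the sufficient condition $m_{\mathrm{f}} = m_{\mathrm{gf}}$, and it is exactly the step of shrinking $V$ while preserving faithfulness on $C(G)$ (equivalently, $p$-faithfulness, by Corollary~\ref{cor.C(G)}) that has no known implementation. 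These are quibbles about the exposition of an avowedly incomplete argument, not errors in a claimed proof.
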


The rest of the paper is structured as follows.
Theorem~\ref{thm:lowerbound} is proved in
Section~\ref{sect.lowerbound}. Section~\ref{sec:primeToPClosure} is
devoted to preliminary material on the $p$-closure of a field.  
Theorem~\ref{thm:MainTheorem1}(a) is proved in Sections~\ref{sect.C(G)}
and~\ref{sect.MainTheorem1a}. 
In Section~\ref{sect.isogeny} we will show that if $G \to Q$ is a
$p$-isogeny then $\ed_k(G; p) = \ed_k(Q; p)$. This result
playes a key role in the proof of Theorem~\ref{thm:MainTheorem1}(b) 
in Section~\ref{sect.MainTheorem1b}. At the end of  
Section~\ref{sect.MainTheorem1b} we prove
a formula for the essential $p$-dimension of any finite 
group $G$ by passing to a Sylow $p$-subgroup defined over $k$;
see Corollory \ref{cor:Sylow}. In Section~\ref{sect.additivity} 
we prove the following Additivity 
Theorem~\ref{thm.additive}:
If $G_1$ and $G_2$ are direct products of tori and $p$-groups, then
\[ \ed_k(G_1 \times G_2; p) = \ed_k(G_1; p) + \ed_k(G_2; p) \, . \]
In Section~\ref{sect.modules+lattices}
we restate and amplify Theorem~\ref{thm:MainTheorem1}(b) (with $G$ abelian)
in terms of $\Gal(\sep{k}/k)$-modules; in particular, 
Theorem~\ref{thm:MainTheorem2} stated above is a special case 
of Corollary~\ref{cor.lattice} which is proved there. 
In Section~\ref{sect.MainTheorem1c} we prove 
Theorem~\ref{thm:MainTheorem1}(c) by 
using Theorem~\ref{thm:MainTheorem1}(b), 
additivity, and the lattice perspective 
from Section~\ref{sect.modules+lattices}. 
The last two sections are intended to illustrate our 
results by computing essential dimensions of specific 
algebraic tori.  In Section~\ref{sect.ed<=1} we classify 
algebraic tori $T$ of essential $p$-dimension 
$0$ and $1$; see Theorems~\ref{thm.ed=0} and
\ref{thm.ed=1}.  In Section~\ref{sect.splitsquare} we compute 
the essential $p$-dimension of all tori $T$ over a $p$-closed
field $k$, which are split by a cyclic 
extension $l/k$ of degree dividing $p^2$.

\section{Proof of Theorem~\ref{thm:lowerbound}}
\label{sect.lowerbound}


Denote by $C^\ast\colonequals\Hom(C,\mu_p)$ the character group of
$C$. Let $E \to \Spec K$ be a versal $Q$-torsor \cite[Example
5.4]{gms}, where $K/k$ is some field extension, and let $\beta
\colon C^\ast \to \Br_p(K)$ denote the homomorphism that sends $\chi
\in C^\ast$ to the image of $E\in H^1(K,Q)$ in $\Br_p(K)$ under the
map
\[ H^1(K,Q)\to H^2(K,C)\stackrel{\chi_\ast}\to H^2(K,\mu_p) = \Br_p(K) \]
given by composing the connecting map with $\chi_{\ast}$. Then
there exists a basis $\chi_1,\dotsc,\chi_r$ of $C^\ast$  such that
\begin{equation}
\label{eq:lowerBound}
\ed_k(G;p) \geq \sum_{i=1}^r \ind \beta(\chi_i) - \dim G,
\end{equation}
 see \cite[Theorem 4.8, Example 3.7]{Me1}.
Moreover, by \cite[Theorem 4.4, Remark 4.5]{KM}
\[ \ind \beta(\chi_i)= \gcd \dim (\rho) \, ,  \]
where the greatest common divisor is taken
over all (finite-dimensional) representations $\rho $ of $G$
such that $\rho_{| \, C}$ is scalar multiplication
by $\chi_i$.
By our assumption, $\gcd$ can be replaced by $\min$. Hence,
for each $i \in \{1,\dotsc,r\}$ we can choose
a representation $\rho_i$ of $G$ with
\[\ind \beta(\chi_i) = \dim (\rho_i)\]
such that $(\rho_i)_{| \, C}$ is scalar multiplication by $\chi_i$.

Set $\rho\colonequals \rho_1\oplus \dotsb \oplus \rho_r$.
The inequality~\eqref{eq:lowerBound} can be written as
\begin{equation}
\label{eq:lowerBoundRep}
\ed_k(G;p)\geq \dim(\rho)- \dim G.
\end{equation}
Since $\chi_1,\dotsc,\chi_r$ forms a basis of $C^\ast$
the restriction of $\rho$ to $C$ is faithful. This proves the theorem.
\qed

\section{The $p$-closure of a field}
\label{sec:primeToPClosure}

Let $K$ be a field extension of $k$ and $\alg{K}$ an algebraic closure.
We will construct a field $\pcl{K}/K$ in $\alg{K}$ with all finite subextensions of $\pcl{K}/K$ of degree prime to $p$ and all finite subextensions of $\alg{K}/\pcl{K}$ of degree a power of $p$.

Fix a separable closure $\sep{K}\subset \alg{K}$ of $K$ and denote
$\Gamma=\Gal(\sep{K}/K)$. Recall that $\Gamma$ is profinite and has
Sylow-$p$ subgroups which enjoy similar properties as in the finite
case, see for example \cite{RZ} or \cite{Wi}. Let $\Phi$ be a
Sylow-$p$ subgroup of $\Gamma$ and $\sep{K}^{\Phi}$ its fixed field.

\begin{defn} \label{def.p-closure}
We call the field
\[
\pcl{K}=\{a\in \alg{K}|a \mbox{ is purely inseparable over
}\sep{K}^{\Phi}\}
\]
a {\em $p$-closure} of $K$. A field
$K$ will be called {\em $p$-closed} if $K$=$\pcl{K}$.
\end{defn}
Note that $\pcl{K}$ is unique in $\alg{K}$ only up to the choice of
a Sylow-$p$ subgroup $\Phi$ in $\Gamma$. The notion of being $p$-closed
does not depend on this choice. 


\begin{prop}\label{ptopprop}\hfill
\begin{enumerate}[label=(\alph*), ref=(\alph*)]
\item\label{ptop1} $\pcl{K}$ is a direct limit of finite
extensions $K_i/K$ of degree prime to $p$.
\item\label{ptop2} Every finite extension of $\pcl{K}$
is separable of degree a power of $p$; in particular,
$\pcl{K}$ is perfect.
\item\label{ptop3} The cohomological dimension
of $\Psi=\Gal(\alg{K}/\pcl{K})$ is ${\rm cd}_q(\Psi)=0$ for
any prime $q\ne p$.
\end{enumerate}
\end{prop}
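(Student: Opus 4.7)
The plan is to reduce all three parts to standard facts about the pro-$p$ Sylow subgroup $\Phi$ of $\Gamma = \Gal(\sep{K}/K)$ and the two-layer structure $K \subseteq \sep{K}^{\Phi} \subseteq \pcl{K}$ imposed by Definition~\ref{def.p-closure}.

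For \ref{ptop1}, I would write $\sep{K}^{\Phi}$ as the directed union $\bigcup_{U} \sep{K}^{U}$ over open subgroups $U$ of $\Gamma$ containing $\Phi$. Each such $U$ has finite index in $\Gamma$, and because $\Phi \subseteq U$ remains a pro-$p$ Sylow of $U$, the index $[\Gamma:U]$ is prime to $p$; hence $\sep{K}^{U}/K$ is a finite separable extension of degree prime to $p$. To handle the purely inseparable layer (nontrivial only when $\Char(K) = \ell \ne p$), I would observe that $\pcl{K}$ is the union of finite extensions of $\sep{K}^{\Phi}$ of $\ell$-power degree, obtained by adjoining finitely many elements whose $\ell^{n}$-th powers lie in $\sep{K}^{\Phi}$. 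Composing the two layers exhibits $\pcl{K}$ as a direct limit of finite extensions of $K$ of degree prime to $p$.

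For \ref{ptop2}, I would first verify that $\pcl{K}$ is perfect: in characteristic $\ell \ne p$, given $a \in \pcl{K}$ with $a^{\ell^{n}} \in \sep{K}^{\Phi}$, the unique $\ell$-th root $b \in \alg{K}$ of $a$ satisfies $b^{\ell^{n+1}} \in \sep{K}^{\Phi}$, so $b \in \pcl{K}$. Every finite extension of the perfect field $\pcl{K}$ is therefore separable. Since $\pcl{K}/\sep{K}^{\Phi}$ is purely inseparable while $\sep{K}/\sep{K}^{\Phi}$ is separable, one has $\pcl{K}\cdot \sep{K} = \alg{K}$ and $\pcl{K}\cap \sep{K} = \sep{K}^{\Phi}$ inside $\alg{K}$, so restriction identifies $\Gal(\alg{K}/\pcl{K})$ with $\Gal(\sep{K}/\sep{K}^{\Phi}) = \Phi$. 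Because $\Phi$ is pro-$p$ and any open (hence finite-index) subgroup of a pro-$p$ group has $p$-power index, finite extensions of $\pcl{K}$ have degree a power of $p$. Part \ref{ptop3} is then immediate: $\Psi \cong \Phi$ is pro-$p$, and a standard result in Galois cohomology (cf.\ Serre, \emph{Cohomologie Galoisienne}, Ch.~I) gives ${\rm cd}_{q}(\Phi)=0$ for every prime $q \ne p$. The step requiring the most care is the identification $\Gal(\alg{K}/\pcl{K}) \cong \Phi$; once perfectness and the compositum/intersection computations are in hand, the rest is bookkeeping.
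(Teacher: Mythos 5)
Your proof is correct and follows essentially the same route as the paper: the prime-to-$p$ layer is exhausted by fixed fields of open subgroups containing $\Phi$ (the paper uses the cofinal subfamily $\sep{K}^{N\Phi}$), the purely inseparable layer is handled the same way, and part (b) rests on the identification $\Gal(\alg{K}/\pcl{K})\cong\Phi$ with part (c) quoted from Serre. The only difference is that you supply directly the perfectness of $\pcl{K}$ and the compositum/intersection computation where the paper cites Winter [2.2.20], which is a welcome but inessential elaboration.
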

\begin{proof}
\ref{ptop1} First note that $\sep{K}$ is the limit of the directed
set $\{\sep{K}^N\}$ over all normal subgroups $N \subset \Gamma$ of
finite index. Let
\[
\mathcal{L}=\{\sep{K}^{N \Phi}| N\mbox{ normal with finite index in
$\Gamma$}\}.
\]
This is a directed set, and since $\Phi$ is Sylow, the index of $N
\Phi$ in $\Gamma$ is prime to $p$. Therefore $\mathcal{L}$ consists
of finite separable extensions of $K$ of degree prime to $p$.
Moreover, $\sep{K}^{\Phi}$ is the direct limit of fields $L$ in
$\mathcal{L}$.

If $\Char k=0$, $\pcl{K}=\sep{K}^{\Phi}$ and we are done. Otherwise
suppose $\Char k=q\ne p$. Let
\[
\mathcal{E}=\{E\subset \alg{K}| E/L\mbox{ finite and
purely inseparable for some } L\in\mathcal{L}\}.
\]
$\mathcal{E}$ consists of finite extensions of $K$ of degree prime
to $p$, because a purely inseparable extension has degree a power of
$q$. One can check that $\mathcal{E}$ forms a directed set.

Finally note that if $a$ is purely inseparable over $\sep{K}^{\Phi}$
with minimal polynomial $x^{q^{n}}-l$ (so that $l\in
\sep{K}^{\Phi}$), then $l$ is already in some $L\in \mathcal{L}$
since $\sep{K}^{\Phi}$ is the limit of $\mathcal{L}$. Thus $a\in
E=L(a)$ which is in $\mathcal{E}$ and we conclude that $\pcl{K}$ is
the direct limit of $\mathcal{E}$.

\ref{ptop2} $\pcl{K}$ is the purely inseparable closure of
$\sep{K}^{\Phi}$ in $\alg{K}$ and $\alg{K}/\pcl{K}$ is separable,
see \cite[2.2.20]{Win}. Moreover, $\Gal(\alg{K}/\pcl{K})\simeq
\Gal(\sep{K}/\sep{K}^{\Phi})=\Phi$ is a pro-$p$ group and so every
finite extension of $\pcl{K}$ is separable of degree a power of $p$.

\ref{ptop3} See \cite[Cor. 2, I. 3]{serre-gc}.
\end{proof}
We call a covariant functor $\mathcal{F} \colon 
\Fields/k\to \Sets$ {\em limit-preserving}
if for any directed system of fields $\{K_i\}$,
$\displaystyle{\mathcal{F}(\lim_{\rightarrow}K_i)=
\lim_{\rightarrow}\mathcal{F}(K_i)}$.
For example if $G$ is an algebraic group, the Galois
cohomology functor $H^1(*,G)$ is limit-preserving; see \cite[2.1]{Ma}. 

\begin{lem}
\label{lem:edPrimeToPClosure}
Let $\mathcal{F}$ be limit-preserving and $\alpha \in \mathcal{F}(K)$ an object.
Denote the image of $\alpha$ in $\mathcal{F}(\pcl{K})$ by $\alpha_{\pcl{K}}$.
\begin{enumerate}[label=(\alph*), ref=(\alph*)]
\item\label{edptop1} $\ed_k(\alpha;p)=\ed_k(\alpha_{\pcl{K}};p)=
\ed_k(\alpha_{\pcl{K}})$.
\item\label{edptop2} $\ed_k(\mathcal{F};p)=\ed_{\pcl{k}}(\mathcal{F};p)$.
\end{enumerate}
\end{lem}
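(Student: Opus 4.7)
The plan is to establish (a) first and deduce (b) from it by elementary transcendence-degree bookkeeping. Within (a), the equality $\ed_k(\alpha_{\pcl{K}};p) = \ed_k(\alpha_{\pcl{K}})$ is immediate from Proposition~\ref{ptopprop}\ref{ptop2}: every nontrivial finite extension of $\pcl{K}$ has $p$-power degree, so in the definition of $\ed_k(\alpha_{\pcl{K}};p)$ the only admissible prime-to-$p$ extension $K'/\pcl{K}$ is $\pcl{K}$ itself. For the inequality $\ed_k(\alpha_{\pcl{K}}) \le \ed_k(\alpha;p)$, given any finite prime-to-$p$ extension $L/K$ (which, by Proposition~\ref{ptopprop}\ref{ptop1}, may be embedded in $\pcl{K}$), a descent $K_0 \subseteq L$ of $\alpha_L$ gives a descent of $\alpha_{\pcl{K}}$.

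The crux is the reverse inequality $\ed_k(\alpha;p) \le \ed_k(\alpha_{\pcl{K}})$. Start with a descent $\beta \in \mathcal{F}(K_0)$ of $\alpha_{\pcl{K}}$ with $\trdeg_k(K_0) = \ed_k(\alpha_{\pcl{K}})$. Since $\mathcal{F}$ is limit-preserving, $\beta$ arises from a finitely generated subfield of $K_0$, so one may assume $K_0/k$ is finitely generated. By Proposition~\ref{ptopprop}\ref{ptop1}, $\pcl{K}$ is a directed union of finite prime-to-$p$ extensions $L/K$ inside $\pcl{K}$; choose $L$ large enough to contain the finitely many generators of $K_0$. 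Then $\alpha_L, \beta_L \in \mathcal{F}(L)$ have the same image in $\mathcal{F}(\pcl{K}) = \varinjlim_{L' \supseteq L} \mathcal{F}(L')$, so limit-preservingness yields a single finite extension $L' \supseteq L$ inside $\pcl{K}$ (still of prime-to-$p$ degree over $K$) on which $\alpha_{L'} = \beta_{L'}$. Hence $\alpha_{L'}$ descends to $K_0$, giving $\ed_k(\alpha;p) \le \ed_k(\alpha_{L'}) \le \trdeg_k(K_0)$.

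Part (b) will follow from (a) together with the remark that $\pcl{k}/k$ is algebraic, so replacing any descent field $K_0 \supseteq k$ with $K_0 \cdot \pcl{k}$ does not alter its transcendence degree. For $K/\pcl{k}$ this forces $\ed_k(\alpha;p) = \ed_{\pcl{k}}(\alpha;p)$, which upon taking suprema gives $\ed_k(\mathcal{F};p) \ge \ed_{\pcl{k}}(\mathcal{F};p)$. In the other direction, for any $K/k$, choosing Sylows compatibly so that $\pcl{k} \subseteq \pcl{K}$, part (a) yields $\ed_k(\alpha;p) = \ed_k(\alpha_{\pcl{K}}) = \ed_{\pcl{k}}(\alpha_{\pcl{K}}) = \ed_{\pcl{k}}(\alpha_{\pcl{K}};p) \le \ed_{\pcl{k}}(\mathcal{F};p)$. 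The main obstacle is the reverse inequality in (a), where limit-preservingness must be invoked twice --- first to reduce $K_0$ to a finitely generated field, then to spread the equality $\alpha_{\pcl{K}} = \beta_{\pcl{K}}$ down to a finite extension $L'$ of $K$ --- and one must verify at the end that $L'$ remains prime-to-$p$ over $K$, not just over the auxiliary $L$.
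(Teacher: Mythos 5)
Your argument is correct and follows essentially the same route as the paper: the key inequality $\ed_k(\alpha;p)\le\ed_k(\alpha_{\pcl{K}})$ is obtained in both cases by using limit-preservation to bring the descent datum down to a finite prime-to-$p$ stage $L'$ of the directed system defining $\pcl{K}$ (the paper intersects that system with $K_0$ and lifts $\beta$ to some $L\cap K_0$, whereas you push the finitely generated $K_0$ up into some $L$ --- a cosmetic difference), and your part (b) merely spells out what the paper dismisses in one line. The only points to tighten are citations rather than ideas: the embedding of an \emph{arbitrary} finite prime-to-$p$ extension $L/K$ into $\pcl{K}$ does not follow from Proposition~\ref{ptopprop}\ref{ptop1} alone (which only exhibits $\pcl{K}$ as a union of \emph{some} such extensions) but requires conjugacy of Sylow subgroups of $\Gal(\sep{K}/K)$, i.e.\ \cite[Lemma 6.1]{Me1}, which the paper invokes elsewhere; and the compatible choice $\pcl{k}\subseteq\pcl{K}$ in (b) must be made by fixing the Sylow subgroup over $K$ first and then choosing one over $k$ containing its image --- the reverse order can fail --- which is harmless because $\ed_{\pcl{k}}(\mathcal{F};p)$ is independent of the choice of $p$-closure.
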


\begin{proof}
\ref{edptop1}
The inequalities $\ed(\alpha;p)\ge\ed(\alpha_{\pcl{K}};p)=\ed(\alpha_{\pcl{K}})$ are clear from the definition and Proposition~\ref{ptopprop}(b) since $\pcl{K}$ has no finite extensions of degree prime to $p$.
It remains to prove $\ed(\alpha;p)\le\ed(\alpha_{\pcl{K}})$.
If $L/K$ is finite of degree prime to $p$,
\begin{equation}\label{eq.ptp}
\ed(\alpha;p)=\ed(\alpha_L;p),
\end{equation}
cf. \cite[Proposition 1.5]{Me1} and its proof.
For the $p$-closure $\pcl{K}$ this 
is similar and uses \eqref{eq.ptp} repeatedly:

Suppose there is a subfield $K_0\subset \pcl{K}$ and
$\alpha_{\pcl{K}}$ comes from an element $\beta\in
\mathcal{F}(K_0)$, so that $\beta_{\pcl{K}}=\alpha_{\pcl{K}}$. Write
$\pcl{K}=\lim\mathcal{L}$, where $\mathcal{L}$ is a direct system of
finite prime to $p$ extensions of $K$. Then $K_0=\lim\mathcal{L}_0$
with $\mathcal{L}_0=\{L\cap K_0|L\in\mathcal{L}\}$ and by assumption
on $\mathcal{F}$,
$\displaystyle{\mathcal{F}(K_0)=\lim_{L^\prime\in\mathcal{L}_0}\mathcal{F}(L^\prime)}$.
Thus there is a field $L^\prime=L\cap K_0$ ($L\in \mathcal{L}$) and
$\gamma\in\mathcal{F}(L^\prime)$ such that $\gamma_{K_0}=\beta$.
Since $\alpha_L$ and $\gamma_L$ become equal over $\pcl{K}$, after
possibly passing to a finite extension, we may assume they are equal
over $L$ which is finite of degree prime to $p$ over $K$. Combining
these constructions with \eqref{eq.ptp} we see that
\[
\ed(\alpha;p)=\ed(\alpha_L;p)=\ed(\gamma_L;p)\le\ed(\gamma_L)\le\ed(\alpha_{\pcl{K}}).
\]

\ref{edptop2} This follows immediately from \ref{edptop1}, taking
$\alpha$ of maximal essential $p$-dimension.
\end{proof}

\begin{prop}
\label{prop:bijectionPrimeToPClosure} Let
$\mathcal{F},\mathcal{G}\colon \Fields/k \to \Sets$ be
limit-preserving functors and $\mathcal{F} \to \mathcal{G}$ a
natural transformation. If the map
\[\mathcal{F}(K) \to \mathcal{G}(K)\]
is bijective (resp.~surjective) for any $p$-closed field extension $K/k$
then
\[\ed(\mathcal{F};p)= \ed(\mathcal{G};p) \quad (\mbox{resp. }\ed(\mathcal{F};p)\geq\ed(\mathcal{G};p)).\]
\end{prop}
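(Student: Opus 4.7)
The plan is to reduce both inequalities to essential-dimension comparisons over the $p$-closed field $\pcl K$, where the hypothesis on the natural transformation $f\colon\mathcal F\to\mathcal G$ applies directly. By Lemma~\ref{lem:edPrimeToPClosure}\ref{edptop1}, for every $\alpha\in\mathcal F(K)$ one has $\ed_k(\alpha;p)=\ed_k(\alpha_{\pcl K})$, and likewise for $\mathcal G$; moreover, over the $p$-closed field $\pcl K$ the essential dimension and the essential $p$-dimension of any individual element coincide, since $\pcl K$ admits no nontrivial finite prime-to-$p$ extensions (Proposition~\ref{ptopprop}\ref{ptop2}).

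For the surjective direction $\ed_k(\mathcal F;p)\ge\ed_k(\mathcal G;p)$ the approach is direct. Given $\beta\in\mathcal G(K)$, I would use surjectivity at $\pcl K$ to choose $\alpha\in\mathcal F(\pcl K)$ with $f(\alpha)=\beta_{\pcl K}$. Any descent of $\alpha$ to a subfield $L_0\subset\pcl K$ pushes forward through $f$ to a descent of $\beta_{\pcl K}$, yielding
\[ \ed_k(\beta;p)=\ed_k(\beta_{\pcl K})\le\ed_k(\alpha)=\ed_k(\alpha;p)\le\ed_k(\mathcal F;p). \]

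The reverse inequality in the bijective case is the delicate part. Given $\alpha\in\mathcal F(K)$ and $\beta:=f(\alpha)$, I would fix an optimal descent $\beta_{\pcl K}=(\beta_0)_{\pcl K}$ with $\beta_0\in\mathcal G(L_0)$ and $\trdeg_k L_0=\ed_k(\beta_{\pcl K})$, and then \emph{upgrade} $L_0$ by replacing it with its relative algebraic closure $\widetilde L_0$ inside $\pcl K$. This preserves transcendence degree, and the field $\widetilde L_0$ will be $p$-closed: by Proposition~\ref{ptopprop}\ref{ptop2} the field $\pcl K$ is perfect, so its relatively algebraically closed subfield $\widetilde L_0$ is perfect and therefore separably algebraically closed in $\pcl K$; for any finite extension $M/\widetilde L_0$, either $M\subset\pcl K$ (forcing $M=\widetilde L_0$) or, by linear disjointness, $[M:\widetilde L_0]=[M\cdot\pcl K:\pcl K]$, which is a $p$-power since $\pcl K$ is $p$-closed. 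Surjectivity at the $p$-closed field $\widetilde L_0$ then lifts $(\beta_0)_{\widetilde L_0}$ to some $\widetilde\alpha_0\in\mathcal F(\widetilde L_0)$; both $(\widetilde\alpha_0)_{\pcl K}$ and $\alpha_{\pcl K}$ map to $\beta_{\pcl K}$ under $f$, so injectivity at $\pcl K$ forces $(\widetilde\alpha_0)_{\pcl K}=\alpha_{\pcl K}$. Hence $\alpha_{\pcl K}$ descends to $\widetilde L_0$ and
\[ \ed_k(\alpha;p)=\ed_k(\alpha_{\pcl K})\le\trdeg_k\widetilde L_0=\ed_k(\beta;p)\le\ed_k(\mathcal G;p). \]

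The main obstacle I anticipate is the upgrade step: one needs an optimal descent field for $\beta_{\pcl K}$ that is itself $p$-closed, so the hypothesis on $f$ can be applied at the descent level without first passing to a finite prime-to-$p$ extension of $L_0$ and then matching the resulting lift to $\alpha_{\pcl K}$ through a more intricate limit-preservingness and linear-disjointness chase on a factor of $L_0'\otimes_{L_0}\pcl K$. The perfectness of $\pcl K$ from Proposition~\ref{ptopprop}\ref{ptop2} is precisely what makes the relative-algebraic-closure upgrade go through cleanly in all characteristics $\ne p$.
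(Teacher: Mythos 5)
Your proof is correct and follows essentially the same route as the paper's: reduce to $p$-closed fields via Lemma~\ref{lem:edPrimeToPClosure}, upgrade the descent field of $\beta$ to a $p$-closed field of the same transcendence degree inside $\pcl{K}$, and use bijectivity at both that field and at $\pcl{K}$ to transfer the descent to $\alpha$. The only differences are in the justifications: the paper cites \cite[Prop.~1.5]{Me1} for the surjectivity half and \cite[Lemma~6.1]{Me1} to embed an abstract $p$-closure of the descent field into $K$, whereas you argue both steps directly, and your relative-algebraic-closure argument (perfectness of $\pcl{K}$ plus linear disjointness for the resulting regular extension $\pcl{K}/\widetilde{L}_0$) is a valid substitute for showing the upgraded descent field is $p$-closed.
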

\begin{proof}

Assume the maps are surjective. By Proposition~\ref{ptopprop}\ref{ptop1},
the natural transformation is $p$-surjective, in the terminology of
\cite{Me1}, so we can apply \cite[Prop.\ 1.5]{Me1} to conclude
$\ed(\mathcal{F};p)\geq\ed(\mathcal{G};p)$.

Now assume the maps are bijective. Let $\alpha$ be in
$\mathcal{F}(K)$ for some $K/k$ and $\beta$ its image in
$\mathcal{G}(K)$. We claim that
$\ed(\alpha; p)=\ed(\beta; p)$.
First, by Lemma~\ref{lem:edPrimeToPClosure} we can assume that $K$
is $p$-closed and it is enough to prove that
$\ed(\alpha)=\ed(\beta)$.

Assume that $\beta$ comes from $\beta_0\in \mathcal{G}(K_0)$ for
some field $K_0\subset K$.
Any finite prime to $p$
extension of $K_0$ is isomorphic to a subfield 
of $K$ (cf.~\cite[Lemma 6.1]{Me1})
and so also any $p$-closure of $K_0$ (which has the
same transcendence degree over $k$).
We may therefore assume that $K_0$ is $p$-closed.
By assumption $\mathcal{F}(K_0)\rightarrow
\mathcal{G}(K_0)$ and $\mathcal{F}(K)\rightarrow \mathcal{G}(K)$ are
bijective. The unique element $\alpha_0\in\mathcal{F}(K_0)$ which
maps to $\beta_0$ must therefore map to $\alpha$ under the natural
restriction map. This shows that $\ed(\alpha)\le\ed(\beta)$. The
other inequality always holds and the claim follows.

Taking $\alpha$ maximal with respect to its essential dimension, we
obtain
$\ed(\mathcal{F};p)=\ed(\alpha;p)=\ed(\beta;p)\le\ed(\mathcal{G};p)$.
\end{proof}

\section{The group $C(G)$}
\label{sect.C(G)}

As we indicated in the Introduction, our proof of
Theorem~\ref{thm:MainTheorem1}(a) will rely on
Theorem~\ref{thm:lowerbound}. To apply 
Theorem~\ref{thm:lowerbound}, we need to construct 
a split central subgroup $C$ of $G$. In this section,
we will explain how to construct this subgroup (we will call it $C(G)$)
and discuss some of its properties.

Recall that an algebraic group $G$ over a field $k$ is said to be
{\em of multiplicative type} if $G_{\sep{k}}$ is diagonalizable over
the separable closure $\sep{k}$ of $k$; cf.,
e.g.,~\cite[Section 3.4]{voskresenskii}.
Here, as usual, $G_{k'} \colonequals
G \times_{\Spec{k}} \Spec(k')$ for any field extension $k'/k$.
Connected groups of multiplicative type are precisely the algebraic tori.

We will use the following common conventions in working with
an algebraic group $A$ of multiplicative type over $k$.

\begin{itemize}

\smallskip
\item
We will denote the character group of $A$ by $X(A)$.

\smallskip
\item
Given a field extension $l/k$, $A$ is split over $l$ if and only if
the absolute Galois group $\Gal(\sep{l}/l)$ acts trivially on
$X(A)$.

\smallskip
\item
We will write $A[p]$ for the $p$-torsion
subgroup $\{ a \in A \, | \, a^p = 1 \}$ of $A$.
Clearly $A[p]$ is defined over $k$.
\end{itemize}

Let $T$ be an algebraic torus.  It is well 
known how to construct a maximal split subtorus 
of $T$, see for example \cite[8.15]{Bo} or \cite[7.4]{Wa}.
The following definition is a variant of this.

\begin{defn}
Let $A$ be an algebraic group of multiplicative type over $k$.
Let $\Delta(A)$ be the $\Gamma$-invariant subgroup of $X(A)$
generated by elements of the form $x - \gamma(x)$, as $x$ ranges
over $X(A)$ and $\gamma$ ranges over $\Gamma$. Define
\[ \Split(A)=\Diag( X(A)/\Delta(A) )\,. \]
\end{defn}
Here $\Diag$ denotes the anti-equivalence between 
continuous $\ZZ\Gamma$-modules and algebraic groups 
of multiplicative type, cf. \cite[7.3]{Wa}. \par

\begin{defn} \label{def.C(G)}
Let $G$ be an extension of a finite $p$-group by a torus, defined over 
a field $k$, as in~\eqref{e.basic-sequence}. Then
\[ C(G) \colonequals \Split(Z(G)[p]) \, ,  \]
where $Z(G)$ denotes the centre of $G$.
\end{defn}

\begin{lem} \label{lem3.1}
Let $A$ be an algebraic group of multiplicative type over $k$.

\begin{enumerate}[label=(\alph*), ref=(\alph*)]
\item \label{SplitAa} $\Split(A)$ is split over $k$,
\item \label{SplitAb} $\Split(A)
= A$ if and only if $A$ is split over $k$,
\item \label{SplitAc} If
$B$ is a $k$-subgroup of $A$ then $\Split(B) \subset \Split(A)$.
\item \label{SplitAd} For $A=A_1 \times A_2$,
$\Split(A_1 \times A_2) = \Split(A_1) \times \Split(A_2)$,
\item\label{SplitAe} If $A[p] \ne \{ 1 \}$ and $A$ is
split over a Galois extension $l/k$, such that $\overline{\Gamma}=\Gal(l/k)$ is a
$p$-group, then $\Split(A) \ne \{ 1 \}$.
\end{enumerate}
\end{lem}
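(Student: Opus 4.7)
My starting observation is that $X(A)/\Delta(A)$ is precisely the module of $\Gamma$-coinvariants $X(A)_{\Gamma}$: by definition $\Delta(A)$ is generated by the elements $x-\gamma(x)$, which is the augmentation submodule. Thus $\Split(A)=\Diag(X(A)_{\Gamma})$, and the whole lemma can be translated, via the anti-equivalence $\Diag$ between continuous $\ZZ\Gamma$-modules and $k$-groups of multiplicative type, into statements about coinvariants of finitely generated $\Gamma$-modules. Once this is set up, parts \ref{SplitAa}--\ref{SplitAd} reduce to routine formal properties.

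Concretely, for \ref{SplitAa} I would note that $\Gamma$ acts trivially on any coinvariants module, so $\Split(A)$ is split. Part \ref{SplitAb} is immediate: $\Delta(A)=0$ iff $\Gamma$ acts trivially on $X(A)$, that is, iff $A$ is split. For \ref{SplitAc}, the inclusion $B\hookrightarrow A$ of $k$-groups dualizes to a surjection $X(A)\twoheadrightarrow X(B)$ of $\Gamma$-modules; since $(-)_{\Gamma}$ is right exact I obtain a surjection $X(A)_{\Gamma}\twoheadrightarrow X(B)_{\Gamma}$, and applying $\Diag$ converts this into the desired inclusion $\Split(B)\hookrightarrow\Split(A)$. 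Part \ref{SplitAd} is equally formal: $X(A_1\times A_2)\cong X(A_1)\oplus X(A_2)$ as $\Gamma$-modules, and coinvariants commute with finite direct sums.

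The real content lies in \ref{SplitAe}, which I expect to be the main (if modest) obstacle. After replacing $\Gamma$ by its finite quotient $\overline{\Gamma}=\Gal(l/k)$, the statement $\Split(A)\neq\{1\}$ becomes $X(A)_{\overline{\Gamma}}\neq 0$. The hypothesis $A[p]\neq\{1\}$ translates, under $\Diag$, into $X(A)/pX(A)\neq 0$, a finitely generated nonzero module over $\FF_p[\overline{\Gamma}]$. My plan is to exploit that, because $\overline{\Gamma}$ is a finite $p$-group, the group ring $\FF_p[\overline{\Gamma}]$ is local with maximal ideal the augmentation ideal $\mathfrak{m}$ and residue field $\FF_p$. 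Nakayama's lemma then forces
\[
(X(A)/pX(A))_{\overline{\Gamma}}\;=\;(X(A)/pX(A))\big/\mathfrak{m}\,(X(A)/pX(A))\;\neq\;0.
\]
Finally, the right exactness of $(-)_{\overline{\Gamma}}$ yields a canonical isomorphism $X(A)_{\overline{\Gamma}}/p\,X(A)_{\overline{\Gamma}}\cong(X(A)/pX(A))_{\overline{\Gamma}}$, so the left-hand side is nonzero and hence $X(A)_{\overline{\Gamma}}\neq 0$, completing \ref{SplitAe}.
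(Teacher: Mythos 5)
Your proof is correct, and for part \ref{SplitAe} it takes a genuinely different (though closely related) route from the paper's. The identification $X(A)/\Delta(A)=X(A)_{\Gamma}$ (coinvariants) is exactly right, and parts \ref{SplitAa}--\ref{SplitAd} are indeed the formal consequences you describe; the paper simply declares them to follow easily from the definition. For \ref{SplitAe}, the paper first reduces to $A=A[p]$ using part \ref{SplitAc}, then chooses an $\FF_p$-basis of $X(A)$ in which the $p$-group $\overline{\Gamma}$ acts by upper-triangular unipotent matrices (citing Serre) and observes that every $x-\gamma(x)$ lies in the span of $e_2,\dots,e_n$, so $\Delta(A)\ne X(A)$. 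You instead pass to $X(A)/pX(A)$ via right exactness of coinvariants and invoke that $\FF_p[\overline{\Gamma}]$ is local with maximal ideal the augmentation ideal, so Nakayama gives $(X(A)/pX(A))_{\overline{\Gamma}}\ne 0$. The two arguments rest on the same underlying fact (the trivial module is the unique simple $\FF_p[\overline{\Gamma}]$-module), but your packaging is basis-free and makes the reduction to the mod-$p$ situation an instance of the general isomorphism $X(A)_{\overline{\Gamma}}/pX(A)_{\overline{\Gamma}}\cong(X(A)/pX(A))_{\overline{\Gamma}}$ rather than an appeal to \ref{SplitAc}; the paper's version is more elementary in that it only uses triangularizability. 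One small point worth making explicit in your write-up of \ref{SplitAc}: the surjection $X(A)_{\Gamma}\twoheadrightarrow X(B)_{\Gamma}$ is induced by the $\Gamma$-equivariant surjection $X(A)\twoheadrightarrow X(B)$, so the resulting closed immersion $\Split(B)\hookrightarrow\Split(A)$ is compatible with the inclusions of both groups into $A$, which is what the assertion $\Split(B)\subset\Split(A)$ (as subgroups of $A$) requires.
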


\begin{proof}
Parts \ref{SplitAa}, \ref{SplitAb},
\ref{SplitAc} and \ref{SplitAd} easily follow from the definition.

Proof of \ref{SplitAe}: By part \ref{SplitAc}, it suffices to show
that $\Split(A[p]) \ne \{ 1 \}$. Hence, we may assume that $A =
A[p]$ or equivalently, that $X(A)$ is a finite-dimensional
$\FF_p$-vector space on which the $p$-group $\overline{\Gamma}$ acts. Any such
action is upper-triangular, relative to some $\FF_p$-basis $e_1,
\dots, e_n$ of $X(A)$; see, e.g.,~\cite[Proposition 26,
p.64]{serre-rep}. That is,
\[ \text{$\gamma(e_i) = e_i + $ ($\FF_p$-linear combination of $e_{i+1},
\ldots, e_n$)} \]
for every $i = 1, \dots, n$ and every $\gamma \in \overline{\Gamma}$.
Our goal is to show that $\Delta(A) \ne X(A)$. Indeed,
every element of the form
$x - \gamma(x)$ is contained in the $\Gamma$-invariant
submodule $\Span(e_2, \dots, e_n)$. Hence, these elements
cannot generate all of $X(A)$.
\end{proof}

\begin{prop} \label{prop.C(G)}
Suppose $G$ is an extension of a $p$-group by a torus, 
defined over a $p$-closed field $k$. Suppose $N$ is a normal 
subgroup of $G$ defined over $k$. Then the following conditions
are equivalent:

\smallskip
(i) $N$ is finite of order prime to $p$,

\smallskip
(ii) $N \cap C(G) = \{ 1 \}$,

\smallskip
(iii) $N \cap Z(G)[p] = \{ 1 \}$,
\end{prop}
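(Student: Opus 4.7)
The plan is to handle (i) $\Rightarrow$ (iii) $\iff$ (ii) in a short burst and spend the rest of the effort on (iii) $\Rightarrow$ (i). Implication (i) $\Rightarrow$ (iii) is immediate: $Z(G)[p]$ is $p$-torsion while $|N|$ is prime to $p$. For (iii) $\iff$ (ii): $C(G) = \Split(Z(G)[p])$ is by construction a $k$-subgroup of $Z(G)[p]$, giving (iii) $\Rightarrow$ (ii); conversely, if $H := N \cap Z(G)[p] \neq 1$ then, since $k$ is $p$-closed, any Galois extension splitting $H$ has $p$-power degree (Proposition~\ref{ptopprop}\ref{ptop2}), so Lemma~\ref{lem3.1}\ref{SplitAe} gives $\Split(H) \neq 1$, and by Lemma~\ref{lem3.1}\ref{SplitAc} this sits inside $C(G) \cap N$, contradicting (ii).

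I would attack (iii) $\Rightarrow$ (i) by contrapositive: assuming $N$ is infinite or has nontrivial $p$-torsion, I produce $1 \neq n \in N \cap Z(G)[p]$. Set $M := N \cap T$. A preliminary structural observation: the connected group $T$ acts trivially on $N$ by conjugation, since for each $n$ the assignment $t \mapsto n^{-1} t n t^{-1}$ defines a morphism from the connected scheme $T$ to the finite scheme $M$, hence is constant at the identity. Thus $\overline N := N/M$ is contained in $F_0 := \ker(F \to \Aut(T))$.

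If $M[p] \neq 1$ (which is automatic when $N$ is infinite, since $M$ is then infinite of multiplicative type with character lattice of positive $\ZZ$-rank), then $X(M[p])$ is a nonzero $\FF_p$-vector space on which $F = G/T$ and $\Gamma$ both act as $p$-groups (the latter because $k$ is $p$-closed). The standard fact that a $p$-group acting on a nonzero $\FF_p$-vector space has nonzero coinvariants (dual to the fixed-point statement) yields an $(F \times \Gamma)$-equivariant surjection $X(M[p]) \twoheadrightarrow \FF_p$ onto the trivial module; this dualizes to a $k$-defined, $F$-central embedding $\mu_p \hookrightarrow M[p]$, giving the required nontrivial element of $N \cap Z(G)[p]$.

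The harder case is $M[p] = 1$, $\overline N \neq 1$, where $N$ is finite, $M$ has order prime to $p$, and $\overline N \subseteq F_0$ is a nontrivial $p$-group. Since a nontrivial normal subgroup of a $p$-group meets the center, $\overline Z := \overline N \cap Z(F)$ is a nontrivial $k$-subgroup of $F_0 \cap Z(F)$; by the $\Gamma$-fixed-point argument, pick nontrivial $\bar z \in \overline Z[p](k)$. For any lift $n \in N$ of $\bar z$, the commutator $g \mapsto [n,g]$ factors through $F$ (using $\bar z \in F_0 \cap Z(F)$) and lands in $M$, defining a class in $H^1(F,M)$ independent of the choice of lift. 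This class vanishes because $\gcd(|F|,|M|) = 1$, so some lift is central in $G$; a unique further modification by an element of $M^F$ (solvable because $p$-th powering is bijective on the $p'$-group $M^F$) achieves $n^p = 1$. Uniqueness of such a central $p$-torsion lift --- the ambient torsor is under $M^F[p] = 1$ --- forces $\Gamma$-invariance, producing the required $n \in N(k) \cap Z(G)(k)$. The main obstacles to overcome are assembling the structural fact $\overline N \subseteq F_0$ (which is what funnels commutators into $M$), the coprime vanishing $H^1(F, M) = 0$, and the uniqueness-based descent to $k$ into a coherent argument.
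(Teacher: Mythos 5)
Your proof is correct, and for the hard implication (iii) $\Rightarrow$ (i) it takes a genuinely different route from the paper's. The easy parts agree: like you, the paper deduces the equivalence of (ii) and (iii) from Lemma~\ref{lem3.1}\ref{SplitAc} and \ref{SplitAe}. For (iii) $\Rightarrow$ (i) the paper first proves a Claim by orbit-counting (a nontrivial normal finite $p$-subgroup $M$ of $G$ with $(G^0,M)=1$ satisfies $M\cap Z(G)[p]\neq 1$, because conjugation factors through the $p$-group $G/G^0$), applies it to $(N\cap T)[p]$, shows $(N,T)=1$ by Zariski density of $\bigcup_r T[p^r]$, splits the extension $1\to N\cap T\to N\to \overline N\to 1$ by Schneider's theorem on coprime-order extensions, and applies the Claim a second time to $\overline N$ regarded as a subgroup of $G$. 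Your case (a) is the dual of the paper's Claim (nonzero coinvariants of the character module instead of fixed points of the group of points); your case (b) replaces Schneider's splitting theorem plus the second application of the Claim by an explicit coprimality cocycle computation, $H^1(F(\sep{k}),M(\sep{k}))=0$, followed by a uniqueness argument to descend the central order-$p$ lift to $k$. This makes the argument more self-contained at the price of more bookkeeping. Two points should be repaired in the write-up. First, your ``preliminary structural observation'' ($T$ centralizes $N$) is false as stated when $N$ is infinite, since then $M=N\cap T$ is not a finite scheme and the constancy argument fails (indeed the statement itself fails, e.g.\ for $N=G$); it is only needed, and only valid, in case (b), where $M$ is finite, so it should be placed there. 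Second, lifting $\bar z$ to $N(\sep{k})$ uses surjectivity of $N(\sep{k})\to\overline N(\sep{k})$; this holds because $\sep{k}=\alg{k}$ for the $p$-closed field $k$ (Proposition~\ref{ptopprop}\ref{ptop2}), and is worth saying explicitly since $M$ need not be \'etale in positive characteristic.
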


In particular, taking $N = G$, we see that $C(G) \neq \{ 1 \}$ if $G\neq\{1\}$.

\begin{proof} (i) $\Longrightarrow$ (ii) is obvious, since $C(G)$ 
is a $p$-group.

(ii) $\Longrightarrow$ (iii). Assume the contrary:
$A \colonequals N \cap Z(G)[p] \neq \{ 1 \}$. 
By Lemma~\ref{lem3.1} 
\[ \{ 1 \} \neq C(A) \subset N \cap C(Z(G)[p])  
= N \cap C(G) \, , \]
contradicting (ii).

Our proof of the implication (iii) $\Longrightarrow$ (i), 
will rely on the following 

\smallskip
{\bf Claim:}
Let $M$ be a non-trivial normal finite $p$-subgroup of $G$ such 
that the commutator $(G^0,M)=\{1\}$. Then $M \cap Z(G)[p] \neq \{1\}$.

\smallskip
To prove the claim, note that $M(\sep{k})$ is non-trivial and
the conjugation action of $G(\sep{k})$ on $M(\sep{k})$ factors through an action 
of the $p$-group $(G/G^0)(\sep{k})$. Thus each orbit has $p^n$ elements for 
some $n \ge 0$; consequently, the number of fixed points is
divisible by $p$. The intersection $(M \cap Z(G))(\sep{k})$ is 
precisely the fixed point set for this action; hence,
$M \cap Z(G)[p] \neq \{ 1 \}$. This proves the claim.

We now continue with the proof of the implication (iii) $\Longrightarrow$ (i). 
For notational convenience, set $T \colonequals G^0$.
Assume that $N\triangleleft G$ and $N \cap Z(G)[p] = \{ 1 \}$.
Applying the claim to the normal subgroup 
$M \colonequals (N \cap T)[p]$ of $G$, we see that 
$(N\cap T)[p] = \{ 1 \}$, i.e., $N \cap T$ is a finite group of order
prime to $p$.  The exact sequence 
\begin{equation} \label{extension2} 
1 \to N\cap T \to N \to \overline{N} \to 1 \, ,  
\end{equation}
where $\overline{N}$ is the image of $N$ in $G/T$, 
shows that $N$ is finite. Now observe that
for every $r \ge 1$, the commutator $(N, T[p^r])$ is a $p$-subgroup
of $N \cap T$. Thus $(N, T[p^r]) = \{ 1 \}$ for every $r \ge 1$.
We claim that this implies $(N, T) = \{ 1 \}$ by Zariski density.
If $N$ is smooth, this is straightforward; 
see~\cite[Proposition 2.4, p. 59]{Bo}. If $N$ is not smooth, 
note that the map $c \colon N \times T \to G$ 
sending $(n, t)$ to the commutator $n t n^{-1} t^{-1}$
descends to $\overline{c} \colon 
\overline{N} \times T \to G$ (indeed, $N \cap T$ clearly commutes with $T$).
Since $|\overline{N}|$ is a power of $p$ 
and $\Char(k) \ne p$, $\overline{N}$ is smooth over $k$, and we can 
pass to the separable closure $\sep{k}$ and apply 
the usual Zariski density argument to show that 
the image of $\overline{c}$ is trivial.

We thus conclude that $N \cap T$ is central in $N$.
Since $\gcd(|N\cap T|, \overline{N})=1$, by
\cite[Corollary 5.4]{Sch} the extension~\eqref{extension2}
splits, i.e., $N \simeq (N \cap T) \times \overline{N}$. This turns 
$\overline{N}$ into a subgroup of $G$ satisfying the conditions 
of the claim. Therefore $\overline{N}$ is trivial 
and $N=N\cap T$ is a finite group of order prime to $p$, 
as claimed.
\end{proof}

For future reference, we record the following obvious
consequence of the equivalence of conditions (i) and (ii) in
Proposition~\ref{prop.C(G)}.

\begin{cor} \label{cor.C(G)}
Let $k = \pcl{k}$ be a $p$-closed field and $G$ be 
an extension of a $p$-group by a torus, defined 
over $k$, as in~\eqref{e.basic-sequence}.
A finite-dimensional representation $\rho$ of $G$ defined over $k$
is $p$-faithful if and only $\rho_{| \, C(G)}$ is faithful.
\qed
\end{cor}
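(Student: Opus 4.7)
The statement to prove is that, under the standing hypotheses, a $k$-representation $\rho$ of $G$ is $p$-faithful precisely when its restriction to $C(G)$ is faithful. By definition, $\rho$ is $p$-faithful iff $\ker(\rho)$ is finite of order prime to $p$, while $\rho_{|\,C(G)}$ is faithful iff $\ker(\rho) \cap C(G) = \{1\}$. So the plan is simply to identify these two conditions as special cases of the equivalence (i) $\Longleftrightarrow$ (ii) of Proposition~\ref{prop.C(G)}, applied to the normal subgroup $N \colonequals \ker(\rho)$.

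The only thing to check is that $N = \ker(\rho)$ satisfies the hypotheses of Proposition~\ref{prop.C(G)}: it must be a normal $k$-subgroup of $G$. Normality is automatic for a kernel of a group homomorphism, and because $\rho\colon G \to \GL(V)$ is a morphism of algebraic $k$-groups, its scheme-theoretic kernel is defined over $k$. With $N = \ker(\rho)$, condition (i) of Proposition~\ref{prop.C(G)} reads ``$\ker(\rho)$ is finite of order prime to $p$,'' which is the definition of $p$-faithfulness, while condition (ii) reads ``$\ker(\rho) \cap C(G) = \{1\}$,'' which is equivalent to $\rho_{|\,C(G)}$ being a closed immersion, i.e., faithful. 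Invoking the proposition now gives the corollary in one line.

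Since the substance is entirely in Proposition~\ref{prop.C(G)}, there is no genuine obstacle here; the only thing worth remarking is that this corollary is what bridges the general criterion of Theorem~\ref{thm:lowerbound} (which demands a \emph{faithful} restriction to a split central $p$-subgroup) with the notion of $p$-faithfulness that appears in the statement of Theorem~\ref{thm:MainTheorem1}(a). This is why the corollary is flagged ``for future reference''.
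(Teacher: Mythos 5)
Your proposal is correct and is exactly the paper's intended argument: the paper presents the corollary as an immediate consequence of the equivalence (i) $\Longleftrightarrow$ (ii) of Proposition~\ref{prop.C(G)} applied to $N = \ker(\rho)$, which is precisely what you do. Your added remarks on why $\ker(\rho)$ satisfies the hypotheses of the proposition are routine but accurate.
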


\section{Proof of Theorem~\ref{thm:MainTheorem1}(a)}
\label{sect.MainTheorem1a}

The key step in our proof will be the following proposition.

\begin{prop} \label{prop3.2}
Let $k$ be a $p$-closed field, and $G$ be an extension of
a $p$-group by a torus, as in~\eqref{e.basic-sequence}.
Then the dimension of every irreducible representation
of $G$ over $k$ is a power of $p$.
\end{prop}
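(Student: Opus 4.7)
The plan is to pass to $\sep{k}$, decompose, and then descend. Let $\rho\colon G\to\GL(V)$ be an irreducible $k$-representation. I will first show that every absolutely irreducible summand of $V\otimes_k\sep{k}$ has dimension a power of $p$, and then argue that both the size of the Galois orbit of such a summand and its Schur index (the multiplicity with which it appears in $V\otimes_k\sep{k}$) are also powers of $p$. Combining the three factors gives that $\dim_k V$ is a power of $p$.

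For the absolutely irreducible case, note that over $\sep{k}$ the torus $T$ becomes split and $F$ becomes a constant finite $p$-group, so $G_{\sep{k}}$ is an extension of a $p$-group by a split torus. Let $W$ be an irreducible $\sep{k}[G_{\sep{k}}]$-module. Because $T$ is of multiplicative type, $W\vert_T$ is a sum of characters; the conjugation action of $F$ permutes these transitively (by irreducibility of $W$). Pick one such character $\chi$ and let $G_\chi\subset G$ be its stabilizer. By Clifford theory, $W\cong\op{Ind}_{G_\chi}^{G}(U)$, where $U$ is an irreducible $G_\chi$-module on which $T$ acts $\chi$-isotypically. Hence
\[
\dim W \;=\; [F:F_\chi]\cdot\dim U,
\]
and $[F:F_\chi]$ is a power of $p$ since $F$ is a $p$-group. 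The module $U$ gives a projective representation of the $p$-group $F_\chi = G_\chi/T$; it lifts to an honest representation of a central extension $\widetilde{F_\chi}$ of $F_\chi$ by some finite cyclic subgroup of $\Gm$. Because the Schur multiplier of a finite $p$-group is a $p$-group, the relevant cyclic central subgroup can be taken to be $\mu_{p^s}$, so $\widetilde{F_\chi}$ is itself a finite $p$-group. Ordinary irreducible representations of finite $p$-groups in characteristic $\ne p$ have $p$-power dimension (Theorem~\ref{thm.km}'s classical input, or directly by induction on $|F_\chi|$ using a normal abelian subgroup and Clifford again). Therefore $\dim U$, and hence $\dim W$, is a power of $p$.

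For the descent to $k$, write $V\otimes_k\sep{k}\cong m(W_1\oplus\dotsb\oplus W_s)$, where $W_1,\ldots,W_s$ is a single Galois orbit of absolutely irreducible $\sep{k}[G_{\sep{k}}]$-modules and $m$ is the common multiplicity (this is the standard Wedderburn/Schur decomposition of $V\otimes_k\sep{k}$ coming from $\op{End}_{k[G]}(V)$ being a central simple algebra over a finite field extension of $k$). The orbit size $s$ divides the order of a finite quotient of $\Gal(\sep{k}/k)$; since $k$ is $p$-closed, that Galois group is pro-$p$ by Proposition~\ref{ptopprop}\ref{ptop2}, so $s$ is a power of $p$. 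The multiplicity $m$ equals the index of the endomorphism division algebra $D=\op{End}_{k[G]}(V)$ over its center $Z$; the center $Z$ is a finite extension of $k$, hence of $p$-power degree and itself $p$-closed, so $\op{cd}_q(\Gal(\sep{Z}/Z))=0$ for all $q\ne p$ by Proposition~\ref{ptopprop}\ref{ptop3}. Consequently $\Br(Z)$ is $p$-primary, so $m$ is a power of $p$. Putting the three factors together gives $\dim_k V = m\cdot s\cdot\dim_{\sep{k}}W_1$, a product of $p$-powers.

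The main obstacle is the absolutely irreducible step: one must know that projective representations of a finite $p$-group have $p$-power dimension, which rests on the fact that the Schur multiplier $H^2(F_\chi,\Gm)$ of a $p$-group is a $p$-group. Everything else is bookkeeping with Clifford theory and the $p$-closed hypothesis (via Proposition~\ref{ptopprop}).
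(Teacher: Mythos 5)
Your proof is correct, but it takes a genuinely different route from the paper's. The paper reduces to the case of a finite $p$-group: it constructs a finite $p$-subgroup $F'\subset G$ surjecting onto $F$ (Lemma~\ref{lem.schneider}, which requires a Hochschild-cohomology argument showing $\Ext^1(F,T)$ is torsion), takes the ascending chain $F_i=\langle F', T[p^i]\rangle$ whose union is Zariski dense, shows via a Noetherian argument on Grassmannians (Lemma~\ref{lem3.3}) that an irreducible $G$-representation stays irreducible on $F_i$ for $i\gg 0$, and then invokes Clifford theory over a $p$-power-degree splitting field for the finite case (Lemma~\ref{lem3.2a}). You instead exploit the algebraic-group structure directly: Clifford theory relative to the normal diagonalizable subgroup $T$ over $\sep{k}$ reduces the absolutely irreducible case to projective representations of the $p$-group $F_\chi$, handled by the fact that its Schur multiplier is a $p$-group; the descent to $k$ is then controlled by the pro-$p$ Galois group (orbit size) and by the $p$-primarity of the Brauer group of a $p$-closed field via Proposition~\ref{ptopprop} (Schur index). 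Your route entirely avoids the construction of a finite $p$-subgroup of $G$ --- the most delicate ingredient of the paper's proof --- at the cost of the Schur-multiplier lifting and the endomorphism-algebra bookkeeping (where one should note that $\op{End}_G(V)\otimes_k\sep{k}\cong\op{End}_{G_{\sep{k}}}(V_{\sep{k}})$ and that the center $Z$ is separable over $k$ because $p$-closed fields are perfect); the paper's route, by contrast, yields as a by-product the dense finite $p$-subgroups, and reduces everything to a purely finite-group statement.
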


Assuming Proposition~\ref{prop3.2} we can easily complete the proof of
Theorem~\ref{thm:MainTheorem1}(a). Indeed, by 
Proposition~\ref{prop:bijectionPrimeToPClosure}
we may assume that
$k = \pcl{k}$ is $p$-closed. In particular, since we are assuming
that $\Char(k) \ne p$, this implies that $k$ contains 
a primitive $p$th root of unity.
(Indeed, if $\zeta$ is a $p$-th root of unity in $\sep{k}$ then
$d = [k(\zeta): k]$ is prime to $p$; hence, $d = 1$.)
Proposition~\ref{prop3.2} tells us that 
Theorem~\ref{thm:lowerbound} can be applied to
the exact sequence
\begin{equation} \label{e.soc}
1 \to C(G) \to G \to Q \to 1 \, .
\end{equation}
This yields 
\begin{equation} \label{e.lower-bound2}
\ed(G;p) \ge \min \; \dim(\rho) - \dim(G) \, ,
\end{equation}
where the minimum is taken over all representations
$\rho \colon G \to \GL(V)$ such that $\rho_{|C(G)}$ is faithful.
Corollary~\ref{prop.C(G)} now tells us
that $\rho_{|C(G)}$ is faithful if and only if
$\rho$ is $p$-faithful, and Theorem~\ref{thm:MainTheorem1}(a)
follows.
\qed

\smallskip
The rest of this section will be devoted to the proof of
Proposition~\ref{prop3.2}.  We begin by settling it in the case where
$G$ is a finite $p$-group.

\begin{lem} \label{lem3.2a} 
Proposition~\ref{prop3.2} holds if $G$ is a finite $p$-group.
\end{lem}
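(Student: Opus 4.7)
My plan is to pass to the algebraic closure, decompose an irreducible $k$-representation $V$ of $G$ into its absolutely irreducible constituents, and argue that each of the three classical factors appearing in $\dim V$---the dimension of an absolutely irreducible constituent, the size of its Galois orbit, and the Schur index---is individually a power of $p$.

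Concretely, fix an irreducible $k$-representation $V$ of $G$, choose an absolutely irreducible $\alg{k}$-subrepresentation $V_0 \subset V_{\alg{k}}$, and let $K_0 = k(\chi_{V_0}) \subset \sep{k}$ be the field of character values of $V_0$. Since $\op{char}(k)\neq p$ and $G$ is a finite $p$-group, the group algebra $k[G]$ is semisimple, and standard Wedderburn theory combined with Galois descent yields
\[ \dim V \;=\; m\cdot n\cdot \dim_{\alg{k}} V_0, \]
where $n = [K_0:k]$ is the size of the Galois orbit of $V_0$ under $\Gal(\sep{k}/k)$, and $m$ is the Schur index of $V_0$ over $k$, realized as the index of the Brauer class of the $K_0$-central division algebra $\op{End}_{K_0[G]}(W)$, with $W$ the unique irreducible $K_0$-representation of $G$ satisfying $W_{\alg{k}} \cong V_0^{\,m}$.

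Each of the three factors is a power of $p$: $\dim_{\alg{k}} V_0$ by the classical fact that the dimensions of absolutely irreducible representations of a finite group divide its order; $n$ because the absolute Galois group of the $p$-closed field $k$ is pro-$p$ by Proposition~\ref{ptopprop}\ref{ptop2}, so every finite orbit has $p$-power cardinality; and $m$ because $K_0/k$ has $p$-power degree and $k$ is $p$-closed, whence $K_0$ is itself $p$-closed, so Proposition~\ref{ptopprop}\ref{ptop3} forces $\Br(K_0)$ to be $p$-primary, and in particular every Brauer index over $K_0$---including $m$---is a power of $p$. Multiplying the three yields that $\dim V$ is a power of $p$, as required. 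The only potentially delicate step is the Schur-index factor; the other two factorizations are immediate consequences of the $p$-closed hypothesis through Proposition~\ref{ptopprop}.
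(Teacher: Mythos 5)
Your proof is correct, and it rests on the same three-factor decomposition of $\dim V$ that the paper uses (multiplicity $\times$ number of Galois conjugates $\times$ dimension of an absolutely irreducible constituent); the genuine difference is in how the multiplicity factor is controlled. The paper base-changes only to a finite Galois splitting field $l/k$ and quotes Karpilovsky's Clifford theory, which presents the multiplicity $e$ as a divisor of $[l:k]$ --- a power of $p$ simply because $k$ is $p$-closed --- so no Brauer-group input is needed. You instead identify the multiplicity with the Schur index $m$ and dispose of it by noting that $K_0$ is again $p$-closed, so that $\op{cd}_q(\Gal(\alg{k}/K_0))=0$ for $q\ne p$ makes $\Br(K_0)$ $p$-primary; equivalently, every finite separable splitting field of $\op{End}_{A\otimes K_0}(W)$ has $p$-power degree over $K_0$. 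Both routes work; the paper's is the more elementary at this step, while yours makes explicit where the $p$-closed hypothesis enters through the Brauer group. One point you should phrase more carefully: the lemma is applied to \emph{twisted} (non-constant) finite group schemes, so representations of $G$ are modules over $A=k[G]^\ast$, which is only a $k$-form of a group algebra, and the Galois action on the absolutely irreducible constituents also involves the action on $G(\sep{k})$. Accordingly, $K_0$ should be taken to be the center of the simple component of $A$ supporting $V$ (the fixed field of the stabilizer of $[V_0]$ under this twisted action) rather than literally the field generated by character values. This changes nothing in your three estimates --- $[K_0:k]$ is still the orbit size and a $p$-power because $\Gal(\sep{k}/k)$ is pro-$p$, $\dim_{\alg{k}}V_0$ still divides $|G(\alg{k})|$, and $m$ is still the index of a central division algebra over a $p$-closed field --- so the argument goes through as written.
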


\begin{proof} Choose a finite Galois field extension $l/k$ such that
(i) $G$ is constant over $l$ and
(ii) every irreducible linear representation
of $G$ over $l$ is absolutely irreducible.
Since $k$ is assumed to be $p$-closed, $[l:k]$ is a power of $p$. 

Let $A\colonequals k[G]^\ast$ be the dual Hopf algebra 
of the coordinate algebra of $G$. 
By \cite[Section 8.6]{Ja} a $G$-module structure on 
a $k$-vector space $V$ is equivalent 
to an $A$-module structure on $V$. Now assume that $V$ is 
an irreducible $A$-module and let
$W\subseteq V\otimes_k l$
be an irreducible $A\otimes_k l$-submodule. 
Then by \cite[Theorem 5.22]{Ka} there exists 
a divisor $e$ of $[l:k]$ such that 
\[
V\otimes l \simeq e\left(\bigoplus_{i=1 }^r {}^{\sigma_i}W\right) 
\, , 
\] 
where $\sigma_i \in \Gal(l/k)$ and $\{{}^{\sigma_i}W \mid 1 \leq i \leq r\}$ 
are the pairwise non-isomorphic Galois conjugates of $W$.
By our assumption on $k$, $e$ and $r$ are powers of $p$ and by our choice
of $l$, $\dim_l W = \dim_l({}^{\sigma_1}W) = \ldots = \dim_l({}^{\sigma_r}W)$ 
is also a power of $p$, since it divides the order of $G_l$.
Hence, so is 
$\dim_k(V) = \dim_l V\otimes l = e(\dim_l {}^{\sigma_1}W+ \dotsb +
\dim_l {}^{\sigma_r}W)$.
\end{proof}

Our proof of Proposition~\ref{prop3.2} in full generality 
will based on leveraging Lemma~\ref{lem3.2a} as follows. 

\begin{lem} \label{lem3.3}
Let $G$ be an algebraic group defined over a field $k$ and 
\[ F_1 \subseteq F_2 \subseteq \dots \subset G \]
be an ascending sequence of finite $k$-subgroups whose union
$\cup_{n \ge 1} F_n$ is Zariski dense in $G$.
If $\rho \colon G \to \GL(V)$ is an irreducible representation
of $G$ defined over $k$ then $\rho_{| \, F_i}$ is irreducible for
sufficiently large integers $i$.
\end{lem}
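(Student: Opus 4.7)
The plan is to combine Noetherianity of the Grassmannian with the Zariski density hypothesis. For each integer $d$ with $0 < d < \dim V$, I would consider the projective $k$-scheme $\Gr_d(V)$ of $d$-dimensional subspaces of $V$. For any closed $k$-subgroup scheme $H$ of $G$, the fixed-point locus $\Gr_d(V)^H$ is a closed $k$-subscheme of $\Gr_d(V)$, defined as the equalizer of the action and projection maps $H \times \Gr_d(V) \rightrightarrows \Gr_d(V)$ (well-defined since $\Gr_d(V)$ is separated). Its $k$-points are exactly the $k$-rational $H$-invariant $d$-dimensional subspaces of $V$.

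The inclusions $F_1 \subseteq F_2 \subseteq \cdots$ produce a descending chain of closed subschemes
\[ \Gr_d(V)^{F_1} \supseteq \Gr_d(V)^{F_2} \supseteq \cdots, \]
which must stabilize by Noetherianity of $\Gr_d(V)$: there is some $N_d$ with $\Gr_d(V)^{F_i} = \Gr_d(V)^{F_{N_d}}$ for every $i \geq N_d$. Setting $N \colonequals \max_{0 < d < \dim V} N_d$, it suffices to establish the inclusion $\Gr_d(V)^{F_N}(k) \subseteq \Gr_d(V)^{G}(k)$ for each such $d$: irreducibility of $\rho$ over $k$ makes the right-hand side empty, and emptiness of $\Gr_d(V)^{F_i}(k)$ for all admissible $d$ is exactly the statement that $\rho_{| \, F_i}$ is irreducible over $k$.

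For the key inclusion, take $W \in \Gr_d(V)^{F_N}(k)$. The stabilization yields $W \in \Gr_d(V)^{F_i}(k)$ for every $i \geq N$, while for $i < N$ this is automatic from $F_i \subseteq F_N$. Hence the scheme-theoretic stabilizer $\Stab_G(W)$, a closed $k$-subgroup of $G$, contains the union $\bigcup_i F_i$; by the Zariski density hypothesis, $\Stab_G(W) = G$, so $W \in \Gr_d(V)^{G}(k)$, as required. Taking the maximum $N$ over the finitely many relevant dimensions $d$ gives a single threshold beyond which every $\rho_{| \, F_i}$ is $k$-irreducible.

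The one delicate point is the existence and expected behavior of the fixed-point scheme $\Gr_d(V)^H$ when $H$ is allowed to be non-smooth (the $F_i$ may well have constituents of type $\mu_p$), which is what guarantees that the descending chain really lives inside the Noetherian scheme $\Gr_d(V)$. Once this standard formalism is in place, however, the rest of the argument is a formal Noetherian stabilization paired with Zariski density and presents no further difficulty.
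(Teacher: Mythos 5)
Your argument is correct and is essentially the paper's own proof: both use the descending chain of fixed-point loci $\Gr(d,V)^{F_1}\supseteq \Gr(d,V)^{F_2}\supseteq\cdots$ in the Noetherian Grassmannian, stabilize it, and use Zariski density of $\bigcup_i F_i$ to conclude that a $k$-point of the stabilized locus would be a $G$-invariant subspace, contradicting irreducibility. Your explicit treatment of the scheme-theoretic stabilizer and of the fixed-point scheme for possibly non-smooth $F_i$ is a careful touch the paper leaves implicit, but it is the same proof.
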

 
\begin{proof}
For each $d = 1, ..., \dim(V) - 1$ consider the $G$-action 
on the Grassmannian $\Gr(d, V)$ of $d$-dimensional subspaces of $V$. 
Let $X^{(d)} = \Gr(d, V)^G$ and
$X_i^{(d)} = \Gr(d, V)^{F_i}$
be the subvariety of $d$-dimensional $G$- (resp.~$F_i$-)invariant subspaces of $V$.
Then $X_1^{(d)} \supseteq X_2^{(d)} \supseteq \ldots$ and
since the union of the groups $F_i$ is dense in $G$, 
\[ X^{(d)} = \cap_{i \ge 0} X_i^{(d)} \, . \]
By the Noetherian property of $\Gr(d, V)$, we have
$X^{(d)} = X_{m_d}^{(d)}$ for some $m_d \ge 0$.

Since $V$ does not have any $G$-invariant $d$-dimensional 
$k$-subspaces, we know that $X^{(d)}(k) = \emptyset$.
Thus, $X_{m_d}^{(d)} (k) = \emptyset$, i.e., 
$V$ does not have any $F_{m_d}$-invariant $d$-dimensional 
$k$-subspaces.
Setting $m \colonequals \max \{ m_1, \dots, m_{\dim(V) - 1} \}$, we see that
$\rho_{| \, F_m}$ is irreducible.
\end{proof}

We now proceed with the proof of Proposition~\ref{prop3.2}.
By Lemmas~\ref{lem3.2a} and~\ref{lem3.3}, it suffices to 
construct a sequence of finite $p$-subgroups
\[ F_1 \subseteq F_2 \subseteq \dots \subset G \]
defined over $k$ whose union $\cup_{n \ge 1} F_n$ is Zariski 
dense in $G$. 

In fact, it suffices to construct one $p$-subgroup 
$F' \subset G$, defined over $k$ such that $F'$
surjects onto $F$. Indeed, once $F'$ is constructed,
we can define $F_i \subset G$ as 
the subgroup generated  by $F'$ and $T[p^i]$, for every $i \ge 0$.
Since $\cup_{n \ge 1} F_n$ contains both $F'$ and 
$T[p^i]$, for every $i \ge 0$ it is Zariski dense 
in $G$, as desired.

The following lemma, which establishes the existence of $F'$, 
is thus the final step in our proof of Proposition~\ref{prop3.2}
(and hence, of Theorem~\ref{thm:MainTheorem1}(a)).

\begin{lem} \label{lem.schneider}
Let $1 \to T \to G \xrightarrow{\pi} F \to 1$ be an extension 
of a $p$-group $F$ by a torus $T$ over $k$. 
Then $G$ has a finite $p$-subgroup $F'$ with $\pi(F')= F$. 
\end{lem}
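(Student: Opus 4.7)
The plan is to construct $F'$ via a pushout along the $n$th-power isogeny of $T$, where $n = |F|$ is a power of $p$. Since $T[n] \subset T$ is characteristic in $T$, it is normal in $G$, and the quotient $\bar G \colonequals G/T[n]$ fits into an exact sequence $1 \to T/T[n] \to \bar G \to F \to 1$ of $k$-group schemes. The isogeny $[n] \colon T \twoheadrightarrow T$ has kernel $T[n]$, so $T/T[n]$ is canonically isomorphic to $T$, making $\bar G$ once again an extension of $F$ by $T$.

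The main claim is that $\bar G$ splits over $k$, i.e., that there exists a $k$-group section $\sigma \colon F \hookrightarrow \bar G$. Granting this, I take $F' \subset G$ to be the preimage of $\sigma(F)$ under $G \twoheadrightarrow \bar G$. By construction $F'$ fits into a short exact sequence $1 \to T[n] \to F' \to F \to 1$; both $T[n]$ and $F$ have order a power of $p$, so $F'$ is a finite $p$-subgroup of $G$, and $\pi(F') = F$ as required.

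To establish the splitting of $\bar G$, note that its class in $\op{Ext}^1_k(F,T)$ is the pushforward of the class of $G$ along $[n] \colon T \to T$, hence equal to $n \cdot [G] = |F| \cdot [G]$. In abstract group cohomology, $|F|$ annihilates $H^i(F, -)$ for all $i \ge 1$ (via the standard corestriction--restriction argument), so over $\sep{k}$ the pushout extension splits as an abstract group extension, yielding a section $\sigma_0 \colon F(\sep{k}) \to \bar G(\sep{k})$. The principal obstacle is Galois descent: one must produce a $\Gal(\sep{k}/k)$-equivariant section. I would handle this by analyzing the $Z^1(F,T)$-torsor of sections of $\bar G_{\sep{k}} \to F$ and using that $T$ is smooth and $n$-divisible, together with the fact that $T[n]$ is finite of $p$-power order, to trivialize the relevant descent obstruction---if necessary after replacing $n$ by a larger power of $p$.

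As a fallback, if the descent step proves delicate, one can induct on $|F|$. Using that the center of a non-trivial finite $p$-group is non-trivial, one reduces to the case $F \cong \mathbb{Z}/p$. In that case, one chooses an arbitrary lift $g_0 \in G$ of a generator of $F$ and modifies it by an element $t \in T$, using the divisibility of $T$ to arrange that $(g_0 t)^p$ is $p$-power torsion in $T$; the cyclic subgroup generated by $g_0 t$ is then the desired finite $p$-subgroup, and making this construction Galois-equivariant reduces to a considerably simpler descent problem since one is dealing with a single element.
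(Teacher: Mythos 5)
Your overall strategy --- push the extension forward along a $p$-power isogeny of $T$ so that the resulting extension of $F$ by $T$ splits, then recover $F'$ as the preimage of the splitting, an extension of $F$ by $T[n]$ --- is essentially the paper's strategy, and the reduction in your first two paragraphs is correct. The gap is in the one step that carries all the content: the vanishing of $n\cdot[G]$ in $\Ext^1(F,T)$ \emph{over $k$}. Your restriction--corestriction argument lives in abstract group cohomology and therefore only produces a splitting of $\bar G_{\sep{k}} \to F_{\sep{k}}$; you yourself identify Galois descent of this section as ``the principal obstacle,'' and what you offer in its place is not a proof. The set of homomorphic sections of $\bar G_{\sep{k}}\to F_{\sep{k}}$ is a torsor under the group of crossed homomorphisms $F_{\sep{k}}\to T_{\sep{k}}$, which is the group of $\sep{k}$-points of a $k$-group of multiplicative type, so the descent obstruction lives in an $H^1$ of such a group; this is not killed by smoothness or divisibility of $T$, and it is not clear that enlarging $n$ helps, since pushing forward again changes the extension whose sections you are trying to descend. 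The fallback induction has the same unresolved descent step (``a considerably simpler descent problem'') and in addition assumes $F$ has a $k$-rational generator, which fails for twisted $F$.

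The paper avoids descent entirely by working with extensions of group schemes over $k$ from the start: it uses Schneider's restriction--corestriction theorem for Hochschild cohomology to get $m\cdot\Ex^1(F,T)\simeq m\cdot H^2(F,T)=0$ for $m=|F|$, and then a diagram chase through an embedding $1\to T\to M_1\to M_2\to 1$ of $F$-module schemes to transfer this to the full extension group, obtaining $m^2\cdot\Ext^1(F,T)=0$. The distinction between $\Ext^1$ and $\Ex^1$ (extensions admitting a scheme-theoretic section) is exactly why a naive transfer argument does not apply directly and why the exponent $m^2$ appears. To complete your argument you would need either to import such a $k$-rational annihilation statement or to actually carry out the descent; as written, the lemma is only established over $\sep{k}$.
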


In the case where $F$ is split and $k$ is algebraically closed
this is proved in~\cite[p. 564]{cgr}; cf. also the proof of
\cite[Lemme 5.11]{bs}. 

\begin{proof}
Denote by $\Ext^1(F,T)$ the group of equivalence 
classes of extensions of $F$ by $T$.
We claim that $\Ext^1(F,T)$ is torsion.
Let $\Ex^1(F,T)\subset\Ext^1(F,T)$ be the classes 
of extensions which have a scheme-theoretic 
section (i.e. $G(K)\to F(K)$ is surjective for all $K/k$).
There is a natural isomorphism $\Ex^1(F,T)\simeq H^2(F,T)$, where 
the latter one denotes Hochschild cohomology, 
see~\cite[III. 6.2, Proposition]{DG}.
By~\cite{Sc2} the usual restriction-corestriction 
arguments can be applied in Hochschild cohomology 
and in particular, $m\cdot H^2(F,T)=0$ where $m$ is the order of $F$.
Now recall that $M\mapsto\Ext^i(F,M)$ and $M\mapsto\Ex^i(F,M)$ 
are both derived functors of the crossed homomorphisms 
$M\mapsto\Ex^0(F,M)$, where in the first case $M$ 
is in the category of $F$-module sheaves and in 
the second, $F$-module functors, cf. \cite[III. 6.2]{DG}.
Since $F$ is finite and $T$ an affine scheme, 
by \cite[Satz 1.2 \& Satz 3.3]{Sc1} there is an exact sequence 
of $F$-module schemes $1\to T \to M_1 \to M_2 \to 1$ 
and an exact sequence
$\Ex^0(F,M_1)\to\Ex^0(F,M_2)\to\Ext^1(F,T)\to H^2(F,M_1)\simeq\Ex^1(F,M_1)$.
The $F$-module sequence also induces a long exact 
sequence on $\Ex(F,*)$ and we have a diagram
\[
\xymatrix{
&&\Ext^1(F,T)\ar@{->}[dr]&\\
\Ex^0(F,M_1)\ar@{->}[r]&\Ex^0(F,M_2)\ar@{->}[ur]\ar@{->}[dr]&&\Ex^1(F,M_1)\\
&&\Ex^1(F,T)\ar@{->}[ur]\ar@{^{(}->}[uu]&
}
\]
An element in $\Ext^1(F,T)$ can thus be killed first 
in $\Ex^1(F,M_1)$ so it comes from $\Ex^0(F,M_2)$. 
Then kill its image in $\Ex^1(F,T)\simeq H^2(F,T)$, so it comes from $\Ex^0(F,M_1)$, hence is $0$ in $\Ext^1(F,T)$.
In particular we see that multiplying twice by the order $m$ 
of $F$, $m^2\cdot\Ext^1(F,T)=0$.
This proves the claim.

Now let us consider the exact sequence 
$1\to N\to T\xrightarrow{\times m^2} T\to 1$, 
where $N$ is the kernel of multiplication by $m^2$.
Clearly $N$ is finite and we have an induced exact sequence
\[
\Ext^1(F,N)\to\Ext^1(F,T)\xrightarrow{\times m^2}\Ext^1(F,T)
\]
which shows that the given extension $G$ comes from an extension $F'$ 
of $F$ by $N$.
Then $G$ is the pushout of $F'$ by $N\to T$ and we can 
identify $F'$ with a subgroup of $G$.
\end{proof}

\section{$p$-isogenies}
\label{sect.isogeny}

An isogeny of algebraic groups is a surjective morphism $G\to Q$
with finite kernel. If the kernel is of order prime to
$p$ we say that the isogeny is a $p$-isogeny. 
In this section we will prove
Theorem~\ref{thm:edQuotient} which says that $p$-isogenous
groups have the same essential $p$-dimension. This result will play
a key role in the proof of Theorem~\ref{thm:MainTheorem1}(b)
in Section \ref{sect.MainTheorem1b}.

\begin{thm}\label{thm:edQuotient}
Suppose $G \to Q$ is a $p$-isogeny of algebraic groups over $k$. Then
\begin{enumerate}[label=(\alph*), ref=(\alph*)]
\item\label{edquo1} For any $p$-closed field $K$
containing $k$ the natural map
$H^1(K,G) \to H^1(K,Q)$ is bijective.
\item\label{edquo2} $\ed_k (G;p) = \ed_k (Q;p)$.
\end{enumerate}
\end{thm}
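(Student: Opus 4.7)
The plan is to pivot around the short exact sequence
\[ 1 \to N \to G \to Q \to 1 \]
where $N = \ker(G \to Q)$ is finite of order $n$ coprime to $p$ by hypothesis. Part (b) will follow quickly from part (a) via Proposition~\ref{prop:bijectionPrimeToPClosure}, so the real work is concentrated in (a).

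The central step would be a vanishing lemma: if $K = \pcl{K}$ and $M$ is any finite $K$-group of order prime to $p$, then $H^1(K,M)$ is a singleton and the appropriate obstruction set $H^2(K,M)$ (abelian or nonabelian band version) is trivial. The idea is that by Proposition~\ref{ptopprop}\ref{ptop2} every finite extension of $K$ has $p$-power degree, while any fppf $M$-torsor is trivialized by some finite extension of $K$; a restriction--corestriction argument then kills every cohomology class by $|M|$, which is coprime to $p$, forcing it to vanish. Some care is required when $M$ is not smooth (which can occur when $\Char k$ divides $|M|$), but in that case one verifies the vanishing directly from Kummer theory, e.g.\ $H^1_{\mathrm{fppf}}(K,\mu_\ell) = K^{\ast}/K^{\ast\ell}$, which is trivial for $\ell \ne p$ since the polynomial $x^\ell - a$ necessarily has a root in $K$ (its splitting field has degree dividing $\ell(\ell-1)$, and the prime-to-$p$ part of this must be trivial by $p$-closedness).

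With this vanishing in hand, surjectivity of $H^1(K,G) \to H^1(K,Q)$ follows because the obstruction to lifting a $Q$-torsor $\xi$ lies in $H^2(K,N)$ (or a twisted variant corresponding to the band of $N$), which vanishes. Injectivity uses the standard twisting trick: the fiber over $\xi \in H^1(K,Q)$ is identified with a quotient of $H^1(K,N_\xi)$, where $N_\xi$ is the inner twist of $N$ by any lift of $\xi$; since $N_\xi$ is again a finite $K$-group of order $n$ prime to $p$, the vanishing lemma makes this fiber a singleton. Part (b) is then immediate: the functors $H^1(*,G)$ and $H^1(*,Q)$ are limit-preserving, and part (a) supplies the bijection on all $p$-closed field extensions of $k$, so Proposition~\ref{prop:bijectionPrimeToPClosure} yields $\ed_k(G;p) = \ed_k(Q;p)$.

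The main obstacle is the generality in which $N$ appears: it need not be abelian, and (when $\Char k$ divides $|N|$) need not be smooth, so both the cohomology exact sequence and the twisting have to be handled in the nonabelian flat setting. Once one accepts the cohomological vanishing for finite prime-to-$p$ group schemes over a $p$-closed field---a statement which ultimately reduces to $\Gal(\sep{K}/K)$ being pro-$p$, combined with Kummer theory to handle the infinitesimal case---the remainder of the argument proceeds mechanically from the exact sequence of pointed sets.
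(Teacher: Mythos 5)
Your proposal is correct and follows essentially the same route as the paper: both reduce to vanishing of the cohomology of the (possibly twisted) prime-to-$p$ kernel over a $p$-closed field, use twisting for injectivity and an $H^2$/cohomological-dimension argument for surjectivity, and then deduce (b) from Proposition~\ref{prop:bijectionPrimeToPClosure}. The only difference is cosmetic: where you invoke restriction--corestriction (plus Kummer theory for the infinitesimal part), the paper gets the same vanishing from the fact that $\Gal(\sep{K}/K)$ is pro-$p$ of order coprime to $|N(\sep{K})|$ (its Lemma~\ref{copgrp}) together with $\operatorname{cd}_q(\Gal(\sep{K}/K))=0$ for $q\ne p$, using perfectness of $\pcl{K}$ to pass to $\sep{K}$-points.
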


\begin{exa}
Let $E_6^{sc}, E_7^{sc}$ be simply connected simple groups
of type $E_6, E_7$ respectively.
In \cite[9.4, 9.6]{GR} it is shown that if
$k$ is an algebraically closed field of characteristic $\ne 2$ and
$3$ respectively, then
\[ \text{$\ed_k(E_6^{sc}; 2)=3$ and
$\ed_k(E_7^{sc}; 3)=3$.} \]
For the adjoint groups
$E_6^{ad}=E_6^{sc}/\mu_3$, $E_7^{ad}=E_7^{sc}/\mu_2$ we therefore have
\[ \text{$\ed_k(E_6^{ad}; 2)=3$ and
$\ed_k(E_7^{ad}; 3)=3$.} \]
\end{exa}

We will need two lemmas.

\begin{lem}\label{order}
\label{lem:equivalence} Let $N$ be a finite algebraic group over $k$
($\Char k\ne p$). The following are equivalent:
\begin{enumerate}[label=(\alph*), ref=(\alph*)]
\item\label{order1} $p$ does not divide the order of $N$.
\item\label{order2} $p$ does not divide the order of $N(\alg{k})$.
\end{enumerate}
If $N$ is also assumed to be abelian, denote by $N[p]$ the
$p$-torsion subgroup of $N$. The following are equivalent to the
above conditions.
\begin{enumerate}[label=(\alph*$'$), ref=(\alph*$'$)]
\item\label{order3} $N[p](\alg{k})=\{1\}$.
\item\label{order4} $N[p](\pcl{k})=\{1\}$.
\end{enumerate}
\end{lem}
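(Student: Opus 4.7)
The plan is to handle the equivalences in three steps: (a)$\Leftrightarrow$(b) for a general finite $N$; then, in the abelian case, (b)$\Leftrightarrow$(a$'$) via Cauchy's theorem; and (a$'$)$\Leftrightarrow$(b$'$) via a fixed-point argument for pro-$p$ Galois actions. The only ingredient I would need from earlier in the paper is Proposition~\ref{ptopprop}\ref{ptop2}, which gives that $\Psi=\Gal(\alg{k}/\pcl{k})$ is pro-$p$.

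For (a)$\Leftrightarrow$(b), my approach is to invoke the connected--\'etale exact sequence
\[ 1 \to N^0 \to N \to \pi_0(N) \to 1 \]
of finite $k$-group schemes. Since $\Char k\ne p$, the infinitesimal part $N^0$ has order $1$ in characteristic zero and a power of $\Char k$ otherwise; in either case coprime to $p$. Moreover $N^0(\alg{k})=\{1\}$, so passing to $\alg{k}$-points yields a bijection $N(\alg{k})\simeq\pi_0(N)(\alg{k})$ (easiest to see by decomposing $N_{\alg{k}}$ into its connected components, each contributing a single $\alg{k}$-point), and since $\pi_0(N)$ is \'etale, $|\pi_0(N)(\alg{k})|=|\pi_0(N)|$. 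Thus $|N|/|N(\alg{k})|=|N^0|$ is coprime to $p$, and (a)$\Leftrightarrow$(b) follows.

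Now assume $N$ abelian. Equivalence of (b) and (a$'$) is immediate from Cauchy's theorem applied to the finite abelian group $N(\alg{k})$: $p$ divides its order iff it contains an element of order $p$, and any such element lies in $N(\alg{k})[p]=N[p](\alg{k})$. For (a$'$)$\Leftrightarrow$(b$'$), the group scheme $N[p]$ is $p$-torsion over a field of characteristic $\ne p$, hence \'etale, so $N[p](\alg{k})=N[p](\sep{k})$ is a finite abelian $p$-group carrying a continuous action of $\Psi$, with fixed set $N[p](\pcl{k})$. Since $\Psi$ is pro-$p$, every orbit has $p$-power size, and orbit-counting gives $|N[p](\pcl{k})|\equiv |N[p](\alg{k})|\pmod{p}$. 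If $N[p](\alg{k})\ne\{1\}$ its order is a positive power of $p$, forcing $|N[p](\pcl{k})|$ to be divisible by $p$ and in particular nontrivial; the reverse implication is obvious. The only non-routine ingredient is the connected--\'etale decomposition, which is classical, so I do not anticipate a serious obstacle.
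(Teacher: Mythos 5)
Your proof is correct and follows essentially the same route as the paper's: the connected--\'etale sequence to reduce (a)$\Leftrightarrow$(b) to the \'etale quotient, the triviality of $N^\circ(\alg{k})$, and a pro-$p$ orbit-counting (fixed-point) argument for (a$'$)$\Leftrightarrow$(b$'$). The only cosmetic differences are that you invoke Cauchy's theorem explicitly and phrase the fixed-point step as a congruence mod $p$, where the paper notes that the identity must be accompanied by at least $p-1$ further fixed points.
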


\begin{proof}
\ref{order1}$\iff$\ref{order2}:\; Let $N^\circ$ be the connected
component of $N$ and $N^{et}=N/N^\circ$ the \'etale quotient. Recall
that the order of a finite algebraic group $N$ over $k$ is defined as
$|N|=\dim_kk[N]$ and $|N|=|N^\circ||N^{et}|$, see for example
\cite{Ta}. If $\Char k=0$, $N^\circ$ is trivial, if $\Char k=q\ne p$
is positive, $|N^\circ|$ is a power of $q$. Hence $N$ is of order
prime to $p$ if and only if the \'etale algebraic group $N^{et}$ is.
Since $N^\circ$ is connected and finite, $N^\circ(\alg{k})=\{1\}$ and so $N(\alg{k})$ is of order prime to $p$ if and
only if the group $N^{et}(\alg{k})$ is. Then $|N^{et}|=\dim_k
k[N^{et}]=|N^{et}(\alg{k})|$, cf. \cite[V.29 Corollary]{Bou}.

\ref{order2}$\iff$\ref{order3} $\Rightarrow$ \ref{order4} are clear.

\ref{order3} $\Leftarrow$ \ref{order4}: Suppose $N[p](\alg{k})$ is
nontrivial. The Galois group $\Gamma=\Gal(\alg{k}/\pcl{k})$ is a
pro-$p$ group and acts on the $p$-group $N[p](\alg{k})$. The image
of $\Gamma$ in $\Aut(N[p](\alg{k}))$ is again a (finite) $p$-group
and the size of every $\Gamma$-orbit in $N[p](\alg{k})$ is a power
of $p$. Since $\Gamma$ fixes the identity in $N[p](\alg{k})$, this
is only possible if it also fixes at least $p-1$ more elements. It
follows that $N[p](\pcl{k})$ contains at least $p$ elements, a
contradiction.
\end{proof}
\begin{rem}
Part \ref{order4} could be replaced by the slightly stronger statement that $N[p](\pcl{k}\cap\sep{k})=\{1\}$, but we won't need this in the sequel.
\end{rem}

\begin{lem}\label{copgrp}
Let $\Gamma$ be a profinite group, $G$ an (abstract) finite $\Gamma$-group and $|\Gamma|, |G|$ coprime.
Then $H^1(\Gamma,G)=\{1\}$.
\end{lem}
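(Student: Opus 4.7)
The plan is to reduce to the case of a finite group $\Gamma$ and then invoke the classical coprime-order vanishing statement.

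First I would pick a continuous $1$-cocycle $c\colon\Gamma\to G$ representing a given class in $H^1(\Gamma,G)$. Since $G$ is finite and discrete and the $\Gamma$-action on $G$ is (by convention) continuous, the kernel of $\Gamma\to\Aut(G)$ is an open normal subgroup $\Gamma_0$ of finite index. Because $c$ is continuous with values in a discrete set and $\Gamma$ is compact, $c$ takes only finitely many values and each fibre $c^{-1}(g)$ is open in $\Gamma$. Hence there is an open normal subgroup $N\subseteq\Gamma_0$ through which both the action of $\Gamma$ on $G$ and the function $c$ factor. Setting $\overline{\Gamma}\colonequals\Gamma/N$, the cocycle $c$ descends to a cocycle $\bar c\colon\overline{\Gamma}\to G$ for a finite group $\overline{\Gamma}$ whose order divides the supernatural order $|\Gamma|$, and is therefore coprime to $|G|$.

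Next I would apply the classical fact that $H^1(\overline{\Gamma},G)=\{1\}$ whenever $\overline{\Gamma}$ and $G$ are finite with $\gcd(|\overline{\Gamma}|,|G|)=1$. This is seen most cleanly via the bijection between $H^1(\overline{\Gamma},G)$ and $G$-conjugacy classes of complements to $G$ in the semidirect product $G\rtimes\overline{\Gamma}$: by Schur--Zassenhaus, applied to this semidirect product (whose normal subgroup $G$ has order coprime to the index), all complements are $G$-conjugate, so there is a single cohomology class, necessarily the trivial one. Pulling back, $c$ is a coboundary in $H^1(\Gamma,G)$, as required.

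The main obstacle, such as it is, lies in the nonabelian case. If $G$ were abelian, multiplication by $|\overline{\Gamma}|$ would annihilate $H^1(\overline{\Gamma},G)$, and coprimality with $|G|$ would finish the argument at once. For general finite $G$ one must genuinely rely on (or reprove) Schur--Zassenhaus in the coprime-order case, itself proved by induction via an analysis of characteristic subgroups of $G$. The reduction from profinite to finite, by contrast, is a routine continuity-and-compactness exercise.
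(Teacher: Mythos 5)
Your argument is correct. Note, though, that the paper does not actually prove this lemma: it observes that the finite abelian case is classical and otherwise simply cites Serre, \emph{Galois cohomology}, I.\S5, Exercise~2, which is precisely this statement. What you have written is essentially the intended solution to that exercise: reduce to a finite quotient by continuity and compactness (your reduction is sound --- $c(1)=1$, so $c^{-1}(1)$ is an open set containing an open normal subgroup $N\subseteq\Gamma_0$, and the cocycle identity then forces $c$ to be constant on cosets of $N$), and then identify $H^1(\overline{\Gamma},G)$ with $G$-conjugacy classes of complements of $G$ in $G\rtimes\overline{\Gamma}$ and invoke the conjugacy half of Schur--Zassenhaus. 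Two small points worth making explicit if you write this up: first, textbook statements of Schur--Zassenhaus often give conjugacy of complements under the whole group $E=G\rtimes\overline{\Gamma}$, but writing $e=h'n'$ with $h'\in H_1$, $n'\in G$ shows $H_1^{e}=H_1^{n'}$, so $E$-conjugacy of complements already implies $G$-conjugacy; second, the conjugacy part of Schur--Zassenhaus in the coprime case is not a bare induction on characteristic subgroups --- it requires that one of $G$, $\overline{\Gamma}$ be solvable, which here is guaranteed only by the Feit--Thompson odd order theorem (one of the two coprime orders is odd). That reliance is standard and acceptable, but your parenthetical description of the proof of Schur--Zassenhaus elides it.
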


The case where $\Gamma$ is finite and $G$ abelian is classical. In
the generality we stated, this lemma is also known \cite[I.5, ex. 2]{serre-gc}.

\begin{proof}[Proof of Theorem \ref{thm:edQuotient}]
\ref{edquo1} Let $N$ be the kernel of $G\to Q$ and $K=\pcl{K}$ be a $p$-closed field over $k$.
Since $\sep{K}=\alg{K}$ (see Proposition~\ref{ptopprop}(b)), the sequence of $\sep{K}$-points
$1\to N(\sep{K})\to G(\sep{K})\to Q(\sep{K})\to 1$ is exact.
By Lemma~\ref{order}, the order of $N(\sep{K})$ is not divisible by $p$ and therefore coprime to the order of
$\Psi=\Gal(\sep{K}/K)$.
Thus $H^1(K,N)=\{1\}$ (Lemma~\ref{copgrp}).
Similarly, if $_cN$ is the group $N$ twisted by a cocycle $c:\Psi\to G$, $_cN(\sep{K})=N(\sep{K})$ is of order prime to $p$ and $H^1(K,\,_cN)=\{1\}$.
It follows that $H^1(K,G)\to H^1(K,Q)$ is injective, cf. \cite[I.5.5]{serre-gc}.

Surjectivity is a consequence of \cite[I. Proposition 46]{serre-gc}
and the fact that the $q$-cohomological dimension of $\Psi$
is $0$ for any divisor $q$ of $|N(\sep{K})|$ (Proposition \ref{ptopprop}).

This concludes the proof of part~\ref{edquo1}. Part~\ref{edquo2} immediately follows
from (a) and Proposition~\ref{prop:bijectionPrimeToPClosure}.
\end{proof}

\section{Proof of Theorem~\ref{thm:MainTheorem1}(b)}
\label{sect.MainTheorem1b}

Let $k$ be a closed field and $G = T \times F$,
where $T$ is a torus and $F$ is a finite $p$-group, defined over $k$. 
Our goal is to show that
\begin{equation} \label{e.upper-bound1}
\ed_k(G;p) \le \dim(\rho) - \dim G \, ,
\end{equation}
where $\rho$ is a $p$-faithful representation
of $G$ defined over $k$.

\begin{lem} \label{lem.rigid}
If a representation $\rho \colon G \to \GL (V)$ 
is $p$-faithful, then $G/\ker(\rho) \to \GL(V)$ 
is generically free. In other words, $\rho$ is
$p$-generically free.
\end{lem}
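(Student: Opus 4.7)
The plan is to first reduce to the case where $\rho$ is faithful, and then compute the generic stabilizer directly using the $T$-weight decomposition of $V$. Set $N \colonequals \ker(\rho)$. Since $|N|$ is prime to $p$ while $F$ is a $p$-group, the composition $N \hookrightarrow T \times F \twoheadrightarrow F$ is trivial, so $N \subset T$. Hence $G/N = (T/N) \times F$ is again a direct product of a torus and $F$, and it suffices to show that any faithful representation of such a direct product is generically free. Renaming $T/N$ as $T$, we may assume $\rho\colon G = T\times F \to \GL(V)$ is faithful and aim to show its action on $V$ is generically free.

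Since generic freeness is a geometric property, the plan is to pass to $\alg{k}$ and decompose $V_{\alg{k}} = \bigoplus_{\chi \in X(T)} V_\chi$ into $T$-weight spaces. The key consequence of the direct-product hypothesis is that $F$ commutes with $T$ inside $\GL(V)$, so $F$ preserves each weight space $V_\chi$. For a fixed $f \in F$, the eigenvectors of the operator $f|_{V_\chi} \in \GL(V_\chi)$ either exhaust $V_\chi$ (when $f|_{V_\chi}$ is a scalar) or form a finite union of proper subspaces of $V_\chi$. Consequently, for a generic $v = \sum v_\chi \in V_{\alg{k}}$, each component $v_\chi$ is an eigenvector of $f|_{V_\chi}$ only when $f$ acts on $V_\chi$ by a scalar.

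The stabilizer computation is then immediate: if $(t,f) \in T \times F$ fixes such a generic $v$, the equation $\chi(t)(f\cdot v_\chi) = v_\chi$ for every weight $\chi$ with $V_\chi \ne 0$ forces $f|_{V_\chi} = \chi(t)^{-1}\Id_{V_\chi}$ for each such $\chi$. But this is precisely the statement that $\rho(f) = \rho(t^{-1})$ in $\GL(V)$, i.e.\ $(t,f) \in \ker(\rho) = \{1\}$. Hence the generic stabilizer is trivial and $\rho$ is generically free; the scheme-theoretic version of this conclusion follows from the set-theoretic one since $T$ and $F$ are smooth in characteristic $\ne p$. I do not anticipate any serious obstacle: the only substantive point is the observation $N \subset T$, which crucially uses both the direct-product hypothesis and the coprimality of $|N|$ and $|F|$; after that, the weight-space argument is essentially formal.
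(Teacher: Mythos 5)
Your argument is correct and the reduction to the faithful case (the observation that $\ker(\rho)\subset T$ because $|\ker(\rho)|$ is prime to $p$ while $F$ is a $p$-group) is exactly the paper's first step; after that, however, the two proofs diverge. The paper never decomposes $V$ into weight spaces: it takes $V_1$ to be the complement of the locus where $T$ acts freely (standard for a faithful torus representation), takes $V_2$ to be the union of the fixed-point loci of the nontrivial elements of the \emph{finite} group $T[n]\times F$ with $n=|F|$, and notes that for $v\notin V_1\cup V_2$ a nontrivial $(t,f)\in\Stab_G(v)$ must satisfy $t^n\neq 1$, so that $(t,f)^n=(t^n,1)$ is a nontrivial element of $T$ fixing $v$ --- contradiction. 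Your eigenvector analysis on each $V_\chi$ is more explicit but proves the same statement; in both proofs the direct-product hypothesis is used exactly twice (once to get $\ker(\rho)\subset T$, and once either to make $F$ commute with $T$ so that it preserves the weight spaces, or to make $(t,f)^n$ land in $T$). The paper's version has the mild advantage of quoting generic freeness for tori as a black box; yours has the advantage of reproving it along the way.

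One caveat: your final sentence, that scheme-theoretic triviality of the stabilizer ``follows from the set-theoretic one since $T$ and $F$ are smooth,'' is not a valid inference --- smoothness of the acting group does not imply smoothness of stabilizer subgroup schemes (consider $\Gm$ acting on $\mathbb{A}^1$ by $t\cdot x=t^qx$ in characteristic $q$). The gap is easily closed with the tools you already have: for $v$ with all $v_\chi\neq 0$ the scheme-theoretic stabilizer $\Stab_T(v)$ equals $\bigcap_\chi\ker\chi$ over the characters occurring in $V$, which is trivial as a group scheme because faithfulness of $\rho|_T$ forces these characters to generate $X(T)$; consequently $\Stab_G(v)$ embeds into the \'etale group $F$ and is therefore determined by its geometric points, which your computation shows are trivial.
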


\begin{proof}
Since $\ker(\rho)$ has order prime to $p$, its image under 
the projection map $G = T\times F \to F$ is trivial. 
Hence $\ker(\rho) \subset T$ and $T/N$ is
again a torus. So without loss of generality, we may
assume $\rho$ is faithful.

Let $V_1\subsetneq V$ be a closed subset of $V$ such that $T$ acts freely 
on $V \setminus V_1$.  Let $n = p^r$ be the order of $F$ and $V_2$ be the (finite) union of the fixed point sets of $1 \ne g \in T[n] \times F$. Here as usual, $T[n]$ denotes the $n$-torsion subgroup of $T$. Since $\rho$ is faithful none of these fixed point sets are all of $V$, hence $U\colonequals V \setminus (V_1\cup V_2)$ is a dense open subset of $V$.

We claim that $\Stab_G(v) = \{ 1 \}$ for every $v \in U$.
Indeed, assume $1 \ne g = (t, f) \in \Stab_G(v)$. Since $v \not \in V_2$,
$t^n \ne 1$. Then $1 \ne g^n = (t^n, 1)$ lies in both $T$ and $\Stab_G(v)$.
Since $v \not \in V_1$, this is a contradiction.
\end{proof}

Now suppose $\rho$ is any $p$-faithful representation of $G$. 
Then~\eqref{e.upper-bound} yields
\[ \ed_k (G/N;p)\leq \dim (\rho) - \dim (G/\ker(\rho)) = 
\dim (\rho) - \dim (G) \, . \]
By Theorem~\ref{thm:edQuotient} 
\[ \ed_k(G;p) = \ed(G/N; p) \leq \dim(\rho) - \dim (G) \, , \]
as desired. This completes the proof of~\eqref{e.upper-bound1} and
thus of Theorem~\ref{thm:MainTheorem1}(b).
\qed

\begin{cor}\label{cor:Sylow}
Let $G$ be a finite algebraic
group over a $p$-closed field $k=\pcl{k}$. Then $G$ has a Sylow-$p$
subgroup $G_p$ defined over $k$ and
\[
\ed_k(G;p)=\ed_k(G_p;p)=\ed_k(G_p)=\min \dim (\rho)
\]
where the minimum is taken over all faithful representations of
$G_p$ over $k$.
\end{cor}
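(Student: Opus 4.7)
The plan is to establish the four assertions of the corollary in turn, reducing each to results already proven in the excerpt.

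First, for the existence of a Sylow-$p$ subgroup $G_p \subseteq G$ defined over $k$: since $\Char k \ne p$, the connected component $G^\circ$ has order coprime to $p$, so any Sylow-$p$ of $G$ must inject into the \'etale quotient $G^{\mathrm{et}} = G/G^\circ$. The group $G^{\mathrm{et}}(\sep k)$ carries a continuous action of $\Gamma = \Gal(\sep k / k)$, which is pro-$p$ by Proposition~\ref{ptopprop}\ref{ptop2}. The number of Sylow-$p$ subgroups of $G^{\mathrm{et}}(\sep k)$ is congruent to $1$ modulo $p$, and any action of a pro-$p$ group on a finite set of prime-to-$p$ cardinality has at least one fixed point; hence there is a $\Gamma$-stable Sylow-$p$ subgroup $\bar G_p \subseteq G^{\mathrm{et}}(\sep k)$, which descends to a $k$-subgroup of $G^{\mathrm{et}}$. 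Pulling it back through $G \to G^{\mathrm{et}}$ produces an extension of finite algebraic $k$-groups of coprime orders, which splits by a Schur--Zassenhaus argument (in the spirit of Lemma~\ref{lem.schneider}), and the image of the resulting section is the desired $G_p$.

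Next, I would show $\ed_k(G; p) = \ed_k(G_p; p)$ by applying Proposition~\ref{prop:bijectionPrimeToPClosure} to the natural transformation $H^1(\ast, G_p) \to H^1(\ast, G)$, reducing the task to proving bijectivity of $H^1(K, G_p) \to H^1(K, G)$ for every $p$-closed $K \supseteq k$. Surjectivity is the standard reduction-of-structure argument: a $G$-torsor $\xi$ over $K$ admits a reduction to $G_p$ precisely when ${}_\xi(G/G_p)$ has a $K$-point, and since $({}_\xi(G/G_p))(\sep K)$ has cardinality $[G:G_p]$ prime to $p$ while $\Gal(\sep K/K)$ is pro-$p$, such a fixed point exists by the same pro-$p$ fixed-point principle as above. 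Injectivity follows from an analogous fixed-point analysis of the fibers of the map, using the twisting formalism for non-normal subgroups as in \cite[I.5.5]{serre-gc}.

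Finally, the two remaining equalities $\ed_k(G_p; p) = \ed_k(G_p)$ and $\ed_k(G_p) = \min \dim(\rho)$ are instances of Theorem~\ref{thm:MainTheorem1}(c) and (b) respectively, specialized to $T = \{1\}$ and $F = G_p$: part (c) yields the first equality since $k = \pcl k$, while part (b) identifies the common value with $\min \dim(\rho) - \dim G_p = \min \dim(\rho)$, where $\rho$ ranges over $p$-faithful representations of $G_p$, and for a finite $p$-group $p$-faithful and faithful coincide. The main obstacle is the bijectivity step: while surjectivity follows cleanly from the pro-$p$ fixed-point principle, injectivity requires careful handling of the twisting construction for a non-normal subgroup and of the fibers of a map of nonabelian $H^1$-sets. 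The existence of $G_p$, which also requires a Schur--Zassenhaus splitting, is routine in the finite-algebraic-group setting.
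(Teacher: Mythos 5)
Your construction of $G_p$ and your treatment of the last two equalities are fine (the paper gets $\ed_k(G_p;p)=\ed_k(G_p)$ more directly, from Theorem~\ref{thm:MainTheorem1}(b) together with the observation that the minimal $p$-faithful representation of a $p$-group is faithful, so that $\ed_k(G_p)\le\dim(\rho)$ by \cite[Prop.~4.11]{BF}; your appeal to part~(c) is heavier machinery but not wrong). The genuine gap is in your second step: the map $H^1(K,G_p)\to H^1(K,G)$ is \emph{not} injective for $p$-closed $K$ in general, so Proposition~\ref{prop:bijectionPrimeToPClosure} cannot be applied in the bijective form. Concretely, take $p=3$, $G=S_3$ constant, $G_p=A_3\cong\ZZ/3$, and $K$ a $3$-closed field with $H^1(K,\ZZ/3)\ne 0$ (e.g.\ the $3$-closure of $\FF_7$, whose absolute Galois group is $\ZZ_3$). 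Since $A_3$ is normal, the map $H^1(K,A_3)\to H^1(K,S_3)$ sends a homomorphism $\phi\colon\Gal(\sep{K}/K)\to\ZZ/3$ to its $S_3$-conjugacy class, and conjugation by a transposition inverts $A_3$; hence $\phi$ and $\phi^{-1}$ have the same image while being distinct classes in $H^1(K,A_3)$. The fixed-point principle you invoke shows that the fiber of $H^1(K,G_p)\to H^1(K,G)$ over a class $[\xi]$, which is the orbit set ${}_\xi G(K)\backslash\bigl({}_\xi(G/G_p)\bigr)(K)$, is \emph{nonempty}; it gives no control on the number of orbits, and in the example above that fiber has two elements.

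What survives of your argument is surjectivity, which via Proposition~\ref{prop:bijectionPrimeToPClosure} yields only $\ed_k(G_p;p)\ge\ed_k(G;p)$. The reverse inequality $\ed_k(G;p)\ge\ed_k(G_p;p)$ is the substantive point, and it cannot be obtained by your route; the paper handles it by citing \cite[Lemma~4.1]{mr1}, whose proof is a descent argument for the induced torsor $E\times^{G_p}G$ (one finds, over a field of compression of the induced $G$-torsor, a closed point of prime-to-$p$ degree on the twisted variety ${}_\beta(G/G_p)$ and reduces the structure group there), not a bijectivity statement for $H^1$ over $p$-closed fields. You should either reproduce that argument or cite the result; as written, the injectivity claim is false.
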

\begin{proof}
By assumption,  $\Gamma=\Gal(\sep{k}/k)$ is a pro-$p$ group. It acts
on the set of Sylow-$p$ subgroups of $G(\sep{k})$. Since the number
of such subgroups is prime to $p$, $\Gamma$ fixes at least one of them and by
Galois descent one obtains a subgroup $G_p$ of $G$. By
Lemma~\ref{order}, $G_p$ is a Sylow-$p$ subgroup of $G$. The first
equality $\ed_k(G;p)=\ed_k(G_p;p)$ is shown in \cite[4.1]{mr1} (the reference is for smooth groups but can be generalized to the non-smooth case as well). The
minimal $G_p$-representation $\rho$ from
Theorem~\ref{thm:MainTheorem1}(b) is faithful and thus $\ed_k(G_p)\le
\dim (\rho)$, see for example \cite[Prop. 4.11]{BF}. The Corollary
follows.
\end{proof}
\begin{rem}
\label{rem.nonIsomorphicSylowSubgroups}
Two Sylow-$p$ subgroups of $G$ defined over $k=\pcl{k}$ do not need to be isomorphic over $k$.
\end{rem}

\section{An additivity theorem}
\label{sect.additivity}

The purpose of this section is to prove the following: 

\begin{thm} \label{thm.additive}
Let $G_1$ and $G_2$ be direct products of tori and $p$-groups over a field $k$.
Then $\ed_k(G_1 \times G_2; p) = \ed_k(G_1; p) + \ed_k(G_2; p)$.
\end{thm}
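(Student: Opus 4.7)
The equality splits into subadditivity $\le$ and superadditivity $\ge$. By Lemma~\ref{lem:edPrimeToPClosure}(b) I may assume $k = \pcl{k}$ is $p$-closed.

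Subadditivity is immediate from Theorem~\ref{thm:MainTheorem1}(b). If $\rho_i$ is a $p$-faithful representation of $G_i$ with $\dim\rho_i - \dim G_i = \ed_k(G_i; p)$, then the external direct sum $\rho_1 \boxplus \rho_2$ has kernel $\ker(\rho_1) \times \ker(\rho_2)$, finite of order prime to $p$, hence is $p$-faithful on $G_1 \times G_2$; Theorem~\ref{thm:MainTheorem1}(b) applied to it yields $\ed_k(G_1 \times G_2; p) \le \ed_k(G_1; p) + \ed_k(G_2; p)$.

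For superadditivity I would invoke the Brauer-class lower bound~\eqref{eq:lowerBound} from \cite[Theorem 4.8]{Me1}. For each $i$, let $E_i \to \Spec K_i$ be a versal $(G_i/C(G_i))$-torsor with $K_i = k(V_i)$ the function field of a generically free representation, and set $K = k(V_1 \times V_2)$, so that each extension $K/K_i$ is purely transcendental. By Lemma~\ref{lem3.1}(d), $C(G_1 \times G_2) = C(G_1) \times C(G_2)$, and $E_1 \times E_2$ is a versal torsor over $K$ for the quotient. Functoriality of the connecting map yields $\beta_G(\xi, \eta) = (\beta_{G_1}(\xi))_K + (\beta_{G_2}(\eta))_K$ in $\Br_p(K)$ for $(\xi, \eta) \in C(G_1)^* \oplus C(G_2)^*$. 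For any basis $\{\chi_i = (\chi_i^{(1)}, \chi_i^{(2)})\}_{i=1}^{r_1+r_2}$ of $C(G)^*$, Laplace expansion of the (nonzero) basis-change determinant produces a partition $\{1, \ldots, r_1+r_2\} = S_1 \sqcup S_2$ with $|S_j| = r_j$ such that $\{\chi_i^{(j)} : i \in S_j\}$ is a basis of $C(G_j)^*$ for each $j$. Combining the estimate $\ind \beta_G(\chi_i) \ge \ind \beta_{G_j}(\chi_i^{(j)})$ for $i \in S_j$ with the inequality $\sum_{i \in S_j} \ind \beta_{G_j}(\chi_i^{(j)}) \ge \ed_k(G_j; p) + \dim G_j$ (from Theorem~\ref{thm:MainTheorem1}(b) together with \cite[Theorem 4.4]{KM} and Proposition~\ref{prop3.2}, which identify Brauer indices with minimum dimensions of irreducible representations), one obtains
\[ \sum_{i=1}^{r_1+r_2} \ind \beta_G(\chi_i) \ge \ed_k(G_1; p) + \ed_k(G_2; p) + \dim G \]
for every basis, and applying~\eqref{eq:lowerBound} then yields $\ed_k(G_1 \times G_2; p) \ge \ed_k(G_1; p) + \ed_k(G_2; p)$.

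The main obstacle is justifying the inequality $\ind \beta_G(\xi, \eta) \ge \ind \beta_{G_j}(\chi^{(j)})$. For $j = 2$ (with $\xi = 0$) it is immediate, since $\beta_G(0, \eta) = (\beta_{G_2}(\eta))_K$ and $K/K_2$ is purely transcendental, so Brauer indices are preserved. For $j = 1$ the plan is to specialize at a $K_1$-rational point $p \in V_2$ where, by versality of $E_2$, the induced $(G_2/C(G_2))$-torsor is trivial, so that $\beta_{G_2}(\eta)$ specializes to $0 \in \Br_p(K_1)$ while $\beta_{G_1}(\xi)_K$ specializes to $\beta_{G_1}(\xi)$; since Brauer indices cannot increase under specialization, $\ind \beta_G(\xi, \eta) \ge \ind \beta_{G_1}(\xi)$.
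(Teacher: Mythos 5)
Your proof is correct in outline but takes a genuinely different route from the paper's for the hard direction. The paper isolates a purely representation-theoretic statement (its Lemma~\ref{lem.additive}): writing $f(G,C)$ for the least dimension of a $k$-representation of $G$ faithful on a central $C\simeq\mu_p^r$, one has $f(G_1\times G_2;C_1\times C_2)=f(G_1;C_1)+f(G_2;C_2)$, proved by decomposing a minimal representation into irreducibles, noting that their central characters form an $\FF_p$-basis of $(C_1\times C_2)^*$, and replacing each irreducible by its pullback from one of the two factors. Both inequalities then fall out simultaneously from Theorem~\ref{thm:MainTheorem1}(b) together with Corollary~\ref{cor.C(G)} and $C(G_1\times G_2)=C(G_1)\times C(G_2)$. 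You instead prove superadditivity by returning to Merkurjev's index-sum lower bound~\eqref{eq:lowerBound} and a Laplace-expansion partition of an arbitrary basis of $C(G)^*$; that partition argument is sound, and the bound $\sum_{i\in S_j}\ind\beta_{G_j}(\chi_i^{(j)})\ge \ed_k(G_j;p)+\dim G_j$ is justified as you say. The trade-off is that your route drags in Brauer-class machinery where the paper stays entirely at the level of representations.

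Two remarks on your ``main obstacle.'' First, it dissolves if you use \cite[Theorem 4.4]{KM} on both sides: by that theorem and Proposition~\ref{prop3.2}, $\ind\beta_G(\xi,\eta)$ is the minimal dimension of a $G$-representation with central character $(\xi,\eta)$, and restricting such a representation to $G_1\times\{1\}$ gives a $G_1$-representation of the same dimension with central character $\xi$; hence $\ind\beta_G(\xi,\eta)\ge\ind\beta_{G_1}(\xi)$ with no specialization needed. Second, as written your specialization step has a gap: versality of $E_2$ does \emph{not} imply that the fiber at a chosen $K_1$-point is trivial (versality says every torsor occurs as some specialization, not that a prescribed point yields the trivial one). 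You must instead pick the point in the image of a $K_1$-rational point of the total space of the torsor (which exists since that total space contains a dense open subset of an affine space over the infinite field $K_1$), and then invoke the standard fact that the index of an unramified Brauer class does not increase under specialization. With either repair the argument goes through.
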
 

Let $G$ be an algebraic group defined over $k$ and $C$ be a
$k$-subgroup of $G$. Denote the minimal dimension of a
representation $\rho$ of $G$ (defined over $k$) such that $\rho_{|
\, C}$ is faithful by $f(G, C)$.

\begin{lem} \label{lem.additive}
For $i = 1, 2$ let $G_i$ be an algebraic group defined over $k$ and
$C_i$ be a central $k$-subgroup of $G_i$. Assume that $C_i$ is
isomorphic to $\mu_p^{r_i}$ over $k$ for some $r_1, r_2 \ge 0$. Then
\[ f(G_1 \times G_1; C_1 \times C_2) = f(G_1; C_1) + f(G_2; C_2) \,
. \]
\end{lem}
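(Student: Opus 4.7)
The inequality $\leq$ is elementary: take minimal $(\rho_i, V_i)$ with $(\rho_i)|_{C_i}$ faithful, inflate $V_i$ to a $(G_1\times G_2)$-module by letting the other factor act trivially, and observe that $V_1\oplus V_2$ remains faithful on $C_1\times C_2$. This yields $f(G_1\times G_2; C_1\times C_2)\leq \dim V_1+\dim V_2 = f(G_1;C_1)+f(G_2;C_2)$.

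For the reverse inequality, the plan is to choose $\rho\colon G_1\times G_2\to\GL(V)$ realizing the minimum $\dim V = f(G_1\times G_2; C_1\times C_2)$ and exploit the weight decomposition under $C\colonequals C_1\times C_2$. Since $C \simeq \mu_p^{r_1+r_2}$ is split diagonalizable and central in $G\colonequals G_1\times G_2$, $V$ decomposes over $k$ into $C$-eigenspaces $V=\bigoplus_{\chi\in C^\ast} V_\chi$, each $V_\chi$ being $G$-stable. Faithfulness of $\rho|_C$ is equivalent to the support $S\colonequals \{\chi\in C^\ast : V_\chi\neq 0\}$ spanning $C^\ast = C_1^\ast\oplus C_2^\ast$.

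The heart of the proof will be a combinatorial claim: there exist \emph{disjoint} subsets $T_1, T_2\subseteq S$ with $\pi_i(T_i)$ spanning $C_i^\ast$ for $i = 1,2$, where $\pi_i\colon C_1^\ast\oplus C_2^\ast\to C_i^\ast$ is the projection. Granting this, set $W_i\colonequals\bigoplus_{\chi\in T_i}V_\chi$ and restrict $W_i$ from $G$ to $G_i$ (embedded as $G_i\times\{1\}$ or $\{1\}\times G_i$); then $C_i$ acts on $W_i$ through the characters $\pi_i(T_i)$, which span $C_i^\ast$, so the action is faithful and $\dim W_i\geq f(G_i;C_i)$. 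Disjointness of $T_1$ and $T_2$ then gives $\dim V\geq \dim W_1+\dim W_2 \geq f(G_1;C_1)+f(G_2;C_2)$, as desired.

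The combinatorial claim is the main obstacle; I would deduce it from the matroid union theorem of Edmonds and Nash--Williams applied to the two matroids $M_1, M_2$ on the ground set $S$ in which $A\subseteq S$ is declared independent iff $\pi_i(A)$ is linearly independent in $C_i^\ast$. Each $M_i$ has rank $r_i$ because $\pi_i(S)$ spans $C_i^\ast$. By matroid union, disjoint $M_i$-bases exist provided
\[
\min_{A\subseteq S}\bigl(|S\setminus A|+\rk_{M_1}(A)+\rk_{M_2}(A)\bigr)\;\geq\; r_1+r_2,
\]
and this reduces to the chain
\[
|S\setminus A|+\rk_{M_1}(A)+\rk_{M_2}(A)\;\geq\;|S\setminus A|+\dim\Span A\;\geq\;\dim\Span S=r_1+r_2,
\]
whose two steps come respectively from the injection $\Span A\hookrightarrow \pi_1(\Span A)\oplus\pi_2(\Span A)$ and from the fact that $S\setminus A$ surjects onto a spanning set of $V/\Span A$.
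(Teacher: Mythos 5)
Your proof is correct, but it takes a genuinely different route from the paper's. Both arguments share the easy inequality and the observation that faithfulness of $\rho|_{C_1\times C_2}$ amounts to the characters of $C_1\times C_2$ occurring in $\rho$ spanning $(C_1\times C_2)^\ast$ over $\FF_p$. From there the paper argues differently: it decomposes a \emph{minimal} $\rho$ into irreducible constituents $\rho_1,\dots,\rho_n$, uses minimality to conclude that the associated characters $\chi_1,\dots,\chi_n$ form a \emph{basis} of $(C_1\times C_2)^\ast$ (so $n=r_1+r_2$), and then performs a greedy exchange: since $\chi_j=(\chi_j\circ\epsilon_1\circ\pi_1)+(\chi_j\circ\epsilon_2\circ\pi_2)$, at least one of the two components can replace $\chi_j$ while keeping a basis, and correspondingly $\rho_j$ is replaced by $\rho_j\circ\epsilon_i\circ\pi_i$. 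This produces an explicit $\rho'=\alpha_1\circ\pi_1\oplus\alpha_2\circ\pi_2$ of the same dimension with each $\alpha_i$ faithful on $C_i$. You instead work with the $C$-weight spaces $V_\chi$ (which is fine: $\mu_p^{r_1+r_2}$ is diagonalizable over any field and the $V_\chi$ are $G$-stable by centrality of $C$), and you extract disjoint subsets $T_1,T_2$ of the support via matroid union; your rank verification is correct (modulo the typo $V/\Span A$ for $C^\ast/\Span A$), and the conclusion $\dim V\ge\dim W_1+\dim W_2$ follows since the $T_i$ are disjoint. What your approach buys is that it needs no minimality assumption and handles an arbitrary spanning set of characters directly; what it costs is the appeal to Edmonds--Nash-Williams, which is avoidable --- one can first shrink the support to a subset that is a basis of $(C_1\times C_2)^\ast$ and then run exactly the paper's exchange argument to get the disjoint (in fact partitioning) sets $T_1,T_2$. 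So the two proofs rest on the same $\FF_p$-linear algebra, packaged either as an explicit recursive replacement (the paper) or as a matroid-theoretic existence statement (yours).
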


Our argument is a variant of the proof of~\cite[Theorem 5.1]{KM},
where $G$ is assumed to be a (constant) finite $p$-group and $C$ is
the socle of $G$.

\begin{proof} For $i = 1, 2$ let
$\pi_i \colon G_1 \times G_2 \to G_i$ be the natural projection and
$\epsilon_i \colon G_i \to G_1 \times G_2$ be the natural inclusion.

If $\rho_i$ is a $d_i$-dimensional $k$-representation of $G_i$ whose
restriction to $C_i$ is faithful, then clearly $\rho_1 \circ \pi_1
\oplus \rho_2 \circ \pi_2$ is a $d_1 + d_2$-dimensional
representation of $G_1 \times G_2$ whose restriction to $C_1 \times
C_2$ is faithful.  This shows that
\[ f(G_1 \times G_1; C_1 \times C_2) \le f(G_1; C_1) + f(G_2; C_2) \, . \]
To prove the opposite inequality, let $\rho \colon G_1 \times G_2
\to \GL(V)$ be a $k$-representation such that $\rho_{| \, C_1 \times
C_2}$ is faithful, and of minimal dimension
\[ d = f(G_1 \times G_1; C_1 \times C_2) \]
with this property. Let $\rho_1,\rho_2,\dotsc,\rho_n$ denote the
irreducible decomposition factors in a decomposition series of
$\rho$. Since $C_1 \times C_2$ is central in $G_1 \times G_2$, each
$\rho_i$ restricts to a multiplicative character of $C_1 \times C_2$
which we will denote by $\chi_i$. Moreover since $C_1 \times
C_2\simeq \mu_p^{r_1+r_2}$ is linearly reductive $\rho_{| \, C_1
\times C_2}$ is a direct sum $\chi_1^{\oplus d_1} \oplus \dotsb
\oplus \chi_n^{\oplus d_n}$ where $d_i=\dim V_i$. It is easy to see
that the following conditions are equivalent:

\smallskip
(i) $\rho_{| \, C_1 \times C_2}$ is faithful,

\smallskip
(ii) $\chi_1, \dots, \chi_n$ generate $(C_1 \times C_2)^*$ as an
abelian group.

\smallskip
\noindent In particular we may assume that $\rho=\rho_1\oplus \dotsb
\oplus \rho_n$. Since $C_i$ is isomorphic to $\mu_p^{r_i}$, we will
think of $(C_1 \times C_2)^*$ as a $\FF_p$-vector space of dimension
$r_1$ + $r_2$. Since (i) $\Leftrightarrow$ (ii) above, we know that
$\chi_1, \dots, \chi_n$ span $(C_1 \times C_2)^*$. In fact, they
form a basis of $(C_1 \times C_2)^*$, i.e., $n = r_1 + r_2$. Indeed,
if they were not linearly independent we would be able to drop some
of the terms in the irreducible decomposition $\rho_1 \oplus \dots
\oplus \rho_n$, so that the restriction of the resulting
representation to $C_1 \times C_2$ would still be faithful,
contradicting the minimality of $\dim(\rho)$.

We claim that it is always possible to replace each $\rho_j$ by
$\rho_j'$, where $\rho_j'$ is either $\rho_j \circ \epsilon_1 \circ
\pi_1$ or $\rho_j \circ \epsilon_2 \circ \pi_2$ such that the
restriction of the resulting representation $\rho' = \rho_1' \oplus
\dots \oplus \rho_n'$ to $C_1 \times C_2$ remains faithful.  Since
$\dim(\rho_i) = \dim(\rho_i')$, we see that $\dim(\rho') =
\dim(\rho)$. Moreover, $\rho'$ will then be of the form $\alpha_1
\circ \pi_1 \oplus \alpha_2 \circ \pi_2$, where $\alpha_i$ is a
representation of $G_i$ whose restriction to $C_i$ is faithful.
Thus, if we can prove the above claim, we will have
\begin{align*}
f(G_1 \times G_1; C_1 \times C_2) & = \dim(\rho) = \dim(\rho') =
\dim(\alpha_1) + \dim(\alpha_2)  \\
 &\ge f(G_1, C_1) + f(G_2, C_2) \, ,
\end{align*}
as desired.

To prove the claim, we will define $\rho_j'$ recursively for $j = 1,
\dots, n$. Suppose $\rho_1', \dots, \rho_{j-1}'$ have already be
defined, so that the restriction of
\[ \rho_1' \oplus \dots \oplus \rho_{j-1}' \oplus \rho_j \dots \oplus \rho_n \]
to $C_1 \times C_2$ is faithful. For notational simplicity, we will
assume that $\rho_1 = \rho_1', \dots, \rho_{j-1} = \rho_{j-1}'$.
Note that
\[ \chi_j = (\chi_j \circ \epsilon_1 \circ \pi_1) +
(\chi_j \circ \epsilon_2 \circ \pi_2) \, . \] Since $\chi_1, \dots,
\chi_n$ form a basis $(C_1 \times C_2)^*$ as an $\FF_p$-vector
space, we see that (a) $\chi_j \circ \epsilon_1 \circ \pi_1$
or (b) $\chi_j \circ \epsilon_2 \circ \pi_2$  does not lie in
$\Span_{\FF_p}(\chi_1, \dots, \chi_{j-1}, \chi_{j+1}, \dots,
\chi_n)$. Set
\[ \rho_j' \colonequals \begin{cases} 
\text{$\rho_j \circ \epsilon_1 \circ \pi_1$ in case (a), and} \\
\text{$ \rho_j \circ \epsilon_2 \circ \pi_2$, otherwise.}
\end{cases} \]
Using the equivalence of (i) and (ii) above, we see
that the restriction of
\[ \rho_1 \oplus \dots \oplus \rho_{j-1} \oplus
\rho_j' \oplus \rho_{j+1}, \dots \oplus \rho_n \] to $C$ is
faithful. This completes the proof of the claim and thus of
Lemma~\ref{lem.additive}.
\end{proof}

\begin{proof}[Proof of Theorem~\ref{thm.additive}]
We can pass to a $p$-closure $\pcl{k}$ by
Lemma~\ref{lem:edPrimeToPClosure}. 
Let $C(G)$ be as in Definition~\ref{def.C(G)}.
By Theorem~\ref{thm:MainTheorem1}(b)
\[ \ed(G; p) = f(G, C(G)) - \dim G \, ; \]
cf. Corollary~\ref{cor.C(G)}.
Furthermore, we have $C(G_1 \times G_2) = C(G_1) \times C(G_2)$;
cf.~Lemma~\ref{lem3.1}(d).  Applying Lemma~\ref{lem.additive} 
finishes the proof.
\end{proof}


\section{Modules and lattices} 
\label{sect.modules+lattices}
In this section we rewrite the value of $\ed_k(G;p)$ in terms of the
character module $X(G)$ for an \textit{abelian} group $G$ which is an extension of a $p$-group and a torus. 
Moreover we show that tori with locally isomorphic character lattices have the same essential dimension. 
We need the following preliminaries. 

Let $R$ be a commutative ring
(we use $R=\ZZ$ and $R=\ZZ_{(p)}$ mostly) and $A$ an $R$-algebra. An
$A$-module is called an {\em $A$-lattice} if it is finitely
generated and projective as an $R$-module. For $A=\ZZ\Gamma$
($\Gamma$ a group) this is as usual a free abelian group of finite
rank with an action of $\Gamma$. Particular cases of
$R\Gamma$-lattices are {\em permutation lattices} $L=R[\Lambda]$
where $\Lambda$ is a $\Gamma$-set. \par For $\Gamma=\Gal(\sep{k}/k)$
the absolute Galois group of $k$ we tacitly assume that our
$R\Gamma$-lattices are continuous, i.e.~$\Gamma$ acts through a
finite quotient $\overline{\Gamma}$. Under the anti-equivalence
$\Diag$ a $\ZZ\Gamma$-lattice corresponds to an algebraic $k$-torus.
A torus $S$ is called {\em quasi split} if it corresponds to a
permutation lattice. Equivalently $S\simeq R_{E/k}(\Gm)$ where $E/k$
is \'etale and $R_{E/k}$ denotes Weil restriction. \par \smallskip
Recall that $\ZZ_{(p)}$ denotes the localization of the ring $\ZZ$
at the prime ideal $(p)$. For a $\ZZ$-module $M$ we also write
$M_{(p)}\colonequals \ZZ_{(p)}\otimes M$. \par

When $\Gamma=\Gal(\sep{k}/k)$ we will often pass from
$\ZZ\Gamma$-lattices to $\ZZ_{(p)}\Gamma$-lattices. This corresponds
to identifying $p$-isogeneous tori:
\begin{lem}
\label{lem.pisogeny}
Let $\Gamma=\Gal(\sep{k}/k)$ and let $M,L$ be $\ZZ\Gamma$-lattices. Then the following statements are equivalent:
\begin{enumerate}[label=(\alph*), ref=(\alph*)]
\item \label{equivA} $L_{(p)}\simeq M_{(p)}$.
\item \label{equivB} There exists an injective map $\phi\colon L \to M$ of $\ZZ\Gamma$-modules with cokernel $Q$ finite of order prime to $p$.
\item \label{equivC} There exists a $p$-isogeny $\Diag(M) \to \Diag(L)$.
\end{enumerate}
\end{lem}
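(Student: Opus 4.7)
The plan is to verify the cycle (c) $\Leftrightarrow$ (b) $\Leftrightarrow$ (a). For (b) $\Leftrightarrow$ (c), I would appeal directly to the anti-equivalence $\Diag$ between finitely generated continuous $\ZZ\Gamma$-modules and algebraic $k$-groups of multiplicative type. An injection of lattices $\phi \colon L \hookrightarrow M$ with finite cokernel $Q$ corresponds under $\Diag$ to a short exact sequence $1 \to \Diag(Q) \to \Diag(M) \to \Diag(L) \to 1$, in which $\Diag(Q)$ is a finite commutative group scheme with $|\Diag(Q)| = |Q|$. Consequently, $|Q|$ being prime to $p$ is precisely the condition that $\Diag(M) \to \Diag(L)$ be a $p$-isogeny.

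For (b) $\Rightarrow$ (a), I would tensor $0 \to L \to M \to Q \to 0$ with the flat $\ZZ$-module $\ZZ_{(p)}$; exactness is preserved, and $Q_{(p)} = 0$ because $|Q|$ is prime to $p$, yielding $L_{(p)} \simeq M_{(p)}$.

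The main step is (a) $\Rightarrow$ (b). Starting from a $\ZZ_{(p)}\Gamma$-linear isomorphism $\psi \colon L_{(p)} \xrightarrow{\sim} M_{(p)}$, I would clear denominators: pick a finite $\ZZ$-generating set $\ell_1, \dots, \ell_r$ of $L$, write each $\psi(\ell_i) = m_i / n_i$ with $m_i \in M$ and $n_i$ an integer prime to $p$, and set $N \colonequals n_1 \cdots n_r$. Then $\phi \colonequals N \cdot \psi|_L$ takes values in $M$ and defines a $\ZZ\Gamma$-linear map $\phi \colon L \to M$. Injectivity will follow from $L$ being torsion-free (so $L \hookrightarrow L_{(p)}$) together with $\psi$ being an isomorphism and $N$ a unit in $\ZZ_{(p)}$. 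To control the cokernel I would tensor $\phi$ with $\QQ$ to see it becomes an isomorphism, forcing the cokernel to be finite, and then tensor with $\ZZ_{(p)}$, where $\phi \otimes \ZZ_{(p)} = N\psi$ is again an isomorphism, to see the cokernel has trivial $p$-primary part and hence order prime to $p$.

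The only real obstacle is the bookkeeping in this denominator-clearing step; once one keeps careful track of which maps become isomorphisms after tensoring with $\QQ$ and with $\ZZ_{(p)}$, the rest is formal.
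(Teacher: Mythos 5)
Your proposal is correct and follows essentially the same route as the paper: the equivalence of (b) and (c) via the anti-equivalence $\Diag$, the implication (b) $\Rightarrow$ (a) by exactness of localization and $Q_{(p)}=0$, and (a) $\Rightarrow$ (b) by clearing denominators (the paper rescales the target to $\frac{1}{m}M\simeq M$ rather than rescaling the map by $N$, but this is the same bookkeeping). No gaps.
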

\begin{proof}
The equivalence \ref{equivB} $\Leftrightarrow$ \ref{equivC} is clear
from the anti-equivalence of $\Diag$.

The implication \ref{equivB} $\Rightarrow$ \ref{equivA} follows from
$Q_{(p)}=0$ and that tensoring with $\ZZ_{(p)}$ is exact.

For the implication \ref{equivA} $\Rightarrow$ \ref{equivB} we use that $L$ and $M$ can be considered as subsets of $L_{(p)}$ (resp.~$M_{(p)}$). 
The image of $L$ under a map $\alpha \colon L_{(p)} \to M_{(p)}$ of $\ZZ_{(p)}\Gamma$-modules lands in $\frac{1}{m} M$ for some $m\in \NN$ (prime to $p$) and the index of $\alpha(L)$ in $\frac{1}{m}M$ is finite and prime to $p$ if $\alpha$ is surjective. 
Since $\frac{1}{m}M\simeq M$ as $\ZZ\Gamma$-modules the claim follows.
\end{proof}
\begin{cor} \label{cor.lattice}
Let $G$ be an abelian group which is an extension of a $p$-group by a torus over $k$
and $\Gamma \colonequals \Gal(\sep{k}/k)$ be the absolute Galois group of $k=\pcl{k}$. Let $\Gamma$ act through a finite quotient $\overline{\Gamma}$ on $X(G)$.
Then
 \[ \ed_k(G;p) = \min \rk L - \dim G \, , \]
where the minimum is taken over all permutation $\ZZ\overline{\Gamma}$-lattices
$L$ which admit a map of $\ZZ\overline{\Gamma}$-modules to $X(G)$ with cokernel
finite of order prime to $p$. \par \smallskip If $G$ is a torus,
then the minimum can also be taken over all
$\ZZ_{(p)}\overline{\Gamma}$-lattices $L$ which admit a surjective map of
$\ZZ_{(p)}\overline{\Gamma}$-modules to $X(G)_{(p)}$.
\end{cor}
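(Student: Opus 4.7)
My plan is to combine Theorem~\ref{thm:MainTheorem1}(a) with a matching upper bound specific to abelian groups of multiplicative type, and then translate the resulting statement about representations into the language of (permutation) lattices.

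First I would record the standard dictionary: for $G$ of multiplicative type over $k$, a $k$-linear representation $\rho \colon G \to \GL(V)$ corresponds bijectively to a pair $(L, \phi)$, where $L$ is a permutation $\ZZ\overline{\Gamma}$-lattice with basis the multiset of $\sep{k}$-weights of $\rho$, and $\phi \colon L \to X(G)$ is the $\overline{\Gamma}$-equivariant map sending each basis element to the corresponding character. Under this correspondence $\dim \rho = \rk L$ and $\ker(\rho) = \Diag(\op{coker}\phi)$, so $\rho$ is $p$-faithful if and only if $\op{coker}\phi$ is finite of order prime to $p$. I would then prove the matching upper bound $\ed_k(G;p) \le \dim \rho - \dim G$ for every $p$-faithful representation $\rho$. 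The key point is that such a $\rho$ is automatically $p$-generically free in this abelian setting: $G/\ker(\rho)$ is again of multiplicative type and embeds in $\GL(V)$, and diagonalizing this embedding over $\sep{k}$ one sees that the scheme-theoretic stabilizer of a generic vector equals $\bigcap_i \ker(\chi_i)$, which is trivial because the weights $\chi_1,\dots,\chi_n$ generate $X(G/\ker(\rho))$. Then~\eqref{e.upper-bound} applied to $G/\ker(\rho)$, together with Theorem~\ref{thm:edQuotient}, yields the bound. Combined with the lower bound from Theorem~\ref{thm:MainTheorem1}(a) and the dictionary above, this proves the first assertion.

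For the second assertion (the case $G = T$ a torus) I would argue that the localization functor $-\otimes_{\ZZ}\ZZ_{(p)}$ sets up a rank-preserving bijection between (i) permutation $\ZZ\overline{\Gamma}$-lattices $L$ equipped with a map $L \to X(T)$ whose cokernel is finite of order prime to $p$, and (ii) permutation $\ZZ_{(p)}\overline{\Gamma}$-lattices surjecting onto $X(T)_{(p)}$. Localization carries (i) into (ii); conversely, every permutation $\ZZ_{(p)}\overline{\Gamma}$-lattice is the $\ZZ_{(p)}$-localization of a permutation $\ZZ\overline{\Gamma}$-lattice of the same rank, and any $\ZZ_{(p)}\overline{\Gamma}$-surjection onto $X(T)_{(p)}$ comes, after clearing a prime-to-$p$ denominator, from a $\ZZ\overline{\Gamma}$-map $L \to X(T)$ whose cokernel is killed by an integer prime to $p$. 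This identifies the two minima.

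The main obstacle is establishing the upper bound $\ed_k(G;p) \le \dim \rho - \dim G$ for a $p$-faithful $\rho$ in the general abelian case: the proof of Theorem~\ref{thm:MainTheorem1}(b) in the excerpt uses Lemma~\ref{lem.rigid}, which relies on the direct-product structure $G = T \times F$. Extending the upper bound to an arbitrary abelian extension of a $p$-group by a torus requires the generic-freeness argument specific to multiplicative-type groups sketched above. Once that point is in place, the translation between representations and permutation lattices and the passage from $\ZZ$- to $\ZZ_{(p)}$-coefficients are both formal.
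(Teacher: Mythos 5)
Your proposal follows essentially the same route as the paper's proof: the lower bound comes from Theorem~\ref{thm:MainTheorem1}(a), the dictionary between representations of a group of multiplicative type and $\overline{\Gamma}$-equivariant maps from permutation lattices to $X(G)$ identifies $\min\dim\rho$ (over $p$-faithful $\rho$) with $\min\rk L$, and the $\ZZ_{(p)}$-version for tori is obtained by clearing a prime-to-$p$ denominator exactly as in the paper. The one point where you go beyond the written argument is the upper bound: the paper's proof opens with ``in view of Theorem~\ref{thm:MainTheorem1}(a) it suffices to show\dots'' even though part (a) is only a lower bound, and the inequality $\ed_k(G;p)\le\rk L-\dim G$ is left implicit in the construction of the map from $G$ onto a closed subgroup of the quasi-split torus $\Diag(L)$ with kernel of order prime to $p$ (a closed subgroup of a generically free action is generically free, so \eqref{e.upper-bound} and Theorem~\ref{thm:edQuotient} apply). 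You correctly note that Lemma~\ref{lem.rigid} does not cover this case, since it uses the product structure $G=T\times F$ and an abelian extension of a $p$-group by a torus need not split; your replacement argument — a faithful representation of a group of multiplicative type is generically free because the scheme-theoretic stabilizer of a generic vector is $\bigcap_i\ker\chi_i$, which is trivial when the weights generate the character group — is the right way to close that gap and is equivalent to what the paper tacitly uses.
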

\begin{proof}
Let us prove the first claim. In view of Theorem
\ref{thm:MainTheorem1}(a) it suffices to show that the least dimension
of a $p$-faithful representation of $G_{\pcl{k}}$ over $\pcl{k}$ is
equal to the least rank of a permutation $\ZZ\overline{\Gamma}$-module $L$
which admits a map to $X(G)$ with cokernel finite of order prime to
$p$.

Assume we have such a map $L\to X(G)$. Using the anti-equivalence
$\Diag$ we obtain a $p$-isogeny $G \to \Diag(L)$. We can embed the
quasi-split torus $\Diag(L)$ in $\GL_n$ where $n=\rk L$
\cite[Section 6.1]{voskresenskii}. This yields a $p$-faithful
representation of $G$ of dimension $\rk L$.

Conversely let $\rho \colon G \to \GL(V)$ be a $p$-faithful
representation of $G$. Since $\sep{G}$ is diagonalizable, there
exist characters $\chi_1,\dotsc,\chi_n \in X(G)$ such that $G$ acts
on $\sep{V}$ via diagonal matrices with entries
$\chi_1(g),\dotsc,\chi_n(g)$ (for $g \in G$) with respect to a
suitable basis of $\sep{V}$. Moreover $\overline{\Gamma}$ permutes the set
$\Lambda \colonequals \{\chi_1,\dotsc,\chi_n\}$. Define a map $\phi
\colon \ZZ[\Lambda] \to X(G)$ of $\ZZ\overline{\Gamma}$-modules by sending the
basis element $\chi_i\in \Lambda$ of $L\colonequals \ZZ[\Lambda]$ to
itself. Then the $p$-faithfulness of $\rho$ implies that the
cokernel of $\phi$ is finite and of order prime to $p$. Moreover
$\rk L = |\Lambda| \leq n= \dim V$.

Now consider the case
where $G$ is a torus. Assume we have a surjective map $\alpha \colon
L \to X(G)_{(p)}$ of $\ZZ_{(p)}\overline{\Gamma}$-modules where
$L=\ZZ_{(p)}[\Lambda]$ is permutation, $\Lambda$ a $\overline{\Gamma}$-set.
Then $\alpha(\Lambda)\subseteq \frac{1}{m}X(G)$ for some $m \in \NN$
prime to $p$ (note that $\frac{1}{m}X(G)$ can be considered as a
subset of $X(G)_{(p)}$ since $X(G)$ is torsion free). By
construction the induced map $\ZZ[\Lambda] \to \frac{1}{m}X(G)
\simeq X(G)$ becomes surjective after localization at $p$, hence its
cokernel is finite of order prime to $p$.
\end{proof}
\begin{cor} \label{cor.cyclic}
Let $A$ be a finite (twisted) cyclic $p$-group over $k$. Let $l/k$
be a minimal Galois splitting field of $A$, and $\Gamma \colonequals
\Gal(l/k)$. Then \[ \ed(A;p)=|\Gamma|.\]
\end{cor}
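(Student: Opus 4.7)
The plan is to apply Corollary~\ref{cor.lattice} to $G = A$. First I would pass to the $p$-closure via Lemma~\ref{lem:edPrimeToPClosure}: since $\ed_k(A;p) = \ed_{\pcl{k}}(A;p)$, we may assume $k$ is $p$-closed, in which case $\Gal(\sep{k}/k)$ is pro-$p$, so the minimal Galois splitting field $l/k$ of $A$ is a $p$-extension and $\Gamma = \Gal(l/k)$ is a finite $p$-group embedded faithfully in $\Aut(X(A))$, where $X(A) \simeq \ZZ/p^n\ZZ$ is the character module of $A$. Corollary~\ref{cor.lattice} (with $\dim A = 0$) then gives
\[ \ed_k(A;p) \;=\; \min \rk L, \]
where $L$ ranges over permutation $\ZZ\Gamma$-lattices admitting a $\ZZ\Gamma$-homomorphism $\phi\colon L \to X(A)$ with cokernel of order prime to $p$. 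Since $X(A)$ is a cyclic $p$-group, every quotient of $X(A)$ has $p$-power order, so the cokernel is forced to be trivial and $\phi$ must be surjective.

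For the upper bound, I would take $L = \ZZ[\Gamma]$, of rank $|\Gamma|$: because $X(A)$ is cyclic as an abelian group, it is a cyclic $\ZZ\Gamma$-module, so sending $1 \in \ZZ[\Gamma]$ to a $\ZZ$-generator of $X(A)$ yields a $\ZZ\Gamma$-surjection. This shows $\ed_k(A;p) \leq |\Gamma|$.

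For the lower bound, write a general candidate as $L = \bigoplus_i \ZZ[\Gamma/H_i]$, and let $y_i \in X(A)^{H_i}$ be the image of the coset $eH_i$. The key observation is that every subgroup of the cyclic group $X(A) \simeq \ZZ/p^n\ZZ$ is characteristic, hence stable under $\Gamma \hookrightarrow \Aut(X(A))$; in particular $X(A)^{H_i}$ is itself $\Gamma$-stable. Consequently the image of the $i$-th summand $\ZZ[\Gamma/H_i]$, which is the $\Gamma$-span of $y_i$, is contained in $X(A)^{H_i}$. For $\sum_i X(A)^{H_i}$ to exhaust $X(A)$, some $X(A)^{H_i}$ must already equal $X(A)$, and by faithfulness of the $\Gamma$-action this forces $H_i = \{1\}$. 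That summand contributes rank $|\Gamma|$, yielding $\rk L \geq |\Gamma|$.

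The main obstacle I expect is the lower bound, specifically the step confining the image of each permutation summand inside the proper subgroup $X(A)^{H_i}$. This uses in an essential way the cyclicity of $X(A)$ (so that its subgroups are characteristic and hence automatically $\Gamma$-stable); in a non-cyclic setting this estimate fails and a more delicate lattice-theoretic argument would be required.
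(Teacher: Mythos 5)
Your proof is correct and follows essentially the same route as the paper: reduce to a $p$-closed base via Lemma~\ref{lem:edPrimeToPClosure} (the paper notes explicitly that $\pcl{l}/\pcl{k}$ has the same Galois group as $l/k$, a point you gloss over), apply Corollary~\ref{cor.lattice}, observe that the cokernel must vanish because $X(A)$ is a $p$-group, and use the regular/orbit lattice of a generator for the upper bound. Your lower bound, via $\Gamma$-stability of the characteristic subgroups $X(A)^{H_i}$, is just a slightly more elaborate packaging of the paper's one-line observation that a surjection must send some basis element onto a generator $a$ of $X(A)$, whose stabilizer is trivial by faithfulness, so its orbit has length $|\Gamma|$.
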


\begin{proof} Since $[l:k]$ is a power of $p$, $\pcl{l}/\pcl{k}$ is
a Galois extension of the same degree and the same Galois group as
$l/k$. So we can assume $k=\pcl{k}$.

By Corollary~\ref{cor.lattice} $\ed(A;p)$ is equal to the least
cardinality of a $\Gamma$-set $\Lambda$ such that there exists a map
$\phi \colon \ZZ[\Lambda]\to X(A)$ of $\ZZ\Gamma$-modules with
cokernel finite of order prime to $p$. The group $X(A)$ is a
(cyclic) $p$-group, hence $\phi$ must be surjective. Moreover
$\Gamma$ acts faithfully on $X(A)$. Surjectivity of $\phi$ implies
that some element $\lambda \in \Lambda$ maps to a generator $a$ of
$X(A)$. Hence $|\Lambda|\geq|\Gamma \lambda|\geq |\Gamma a| =
|\Gamma|$. Conversely we have a surjective homomorphism $\ZZ[\Gamma
a] \to X(A)$ that sends $a$ to itself. Hence the claim follows.

\end{proof}

\begin{rem} \label{rem.cyclic}
In the case of twisted cyclic
groups of order $4$ Corollary~\ref{cor.cyclic} is due to Rost~\cite{Ro}
(see also~\cite[Theorem 7.6]{BF}),
and in the case of cyclic groups of order $8$ to Bayarmagnai~\cite{Ba}.
The case of constant groups of arbitrary prime power order
is due to Florence~\cite{Fl}; it is now a special case of
the Karpenko-Merkurjev Theorem~\ref{thm.km}.
\end{rem}

\section{Proof of Theorem~\ref{thm:MainTheorem1}(c)}
\label{sect.MainTheorem1c}

We will prove Theorem~\ref{thm:MainTheorem1}(c) by using 
the lattice point of view from Section~\ref{sect.modules+lattices} 
and the additivity theorem from Section~\ref{sect.additivity}.

Let $\overline{\Gamma}$ be a finite group. Two $\ZZ\overline{\Gamma}$-lattices $M,N$ are said to be in the same {\em genus} if $M_{(p)} \simeq N_{(p)}$ for all primes $p$, cf. \cite[31A]{CR}. 
It is sufficient to check this condition for divisors $p$ of the order of $\overline{\Gamma}$. 
By a theorem of A.V.~Ro\v{\i}ter \cite[Theorem 31.28]{CR} $M$ and $N$ are in the same genus if and only if there exists a $\ZZ\overline{\Gamma}$-lattice $L$ in the genus of the free $\ZZ \overline{\Gamma}$-lattice of rank one such that $M \oplus \ZZ \overline{\Gamma} \simeq N \oplus L$. 
This has the following consequence for essential dimension:
\begin{prop}
\label{prop.equalED}
Let $T,T'$ be $k$-tori. If the lattices $X(T), X(T')$ belong to the same genus then \[\ed_k(T)=\ed_k(T) \text{ and } \ed_k(T;\ell)=\ed_k(T;\ell) \text{ for all primes } \ell.\] 
\end{prop}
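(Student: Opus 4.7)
The plan is to promote the lattice-theoretic hypothesis to a functorial isomorphism $H^1(*,T)\simeq H^1(*,T')$, at which point both essential dimensions agree on the nose. Fix a finite quotient $\overline{\Gamma}$ of $\Gal(\sep{k}/k)$ through which the action on $X(T)$ and $X(T')$ factors (for instance $\overline{\Gamma}=\Gal(l/k)$ for a common splitting field). The Ro\u{\i}ter theorem recalled just before the proposition, applied to the hypothesis that $X(T)$ and $X(T')$ lie in the same genus, yields a $\ZZ\overline{\Gamma}$-lattice $L$ in the genus of the regular representation $\ZZ\overline{\Gamma}$ with
\[ X(T)\oplus \ZZ\overline{\Gamma}\;\simeq\; X(T')\oplus L. \]
Applying the anti-equivalence $\Diag$ converts this into an isomorphism of $k$-tori $T\times P\simeq T'\times P'$, where $P\colonequals \Diag(\ZZ\overline{\Gamma})$ is quasi-split and $P'\colonequals \Diag(L)$.

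The next step is to show that both $P$ and $P'$ have trivial Galois cohomology over every extension $K/k$. For $P$ this is Hilbert~90: quasi-split tori are special. For $P'$, the fact that $L_{(q)}\simeq (\ZZ\overline{\Gamma})_{(q)}$ at every prime $q$ forces $L$ to be a projective $\ZZ\overline{\Gamma}$-module, hence a direct summand of a free $\ZZ\overline{\Gamma}$-lattice $(\ZZ\overline{\Gamma})^{\oplus n}$. Dualizing through $\Diag$, which turns direct sums into direct products, exhibits $P'$ as a direct factor of the quasi-split torus $P^n$. Consequently $H^1(K,P')$ is a direct factor of $H^1(K,P^n)=0$ and vanishes for every $K/k$.

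Chaining together the resulting natural isomorphisms of Galois cohomology functors,
\[ H^1(*,T)\;\simeq\; H^1(*,T\times P)\;\simeq\; H^1(*,T'\times P')\;\simeq\; H^1(*,T'), \]
shows that $F_T$ and $F_{T'}$ are isomorphic as functors $\Fields_k\to\Sets$. Since $\ed_k(\cdot)$ and $\ed_k(\cdot;\ell)$ depend only on the isomorphism class of the underlying functor, the proposition follows.

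The step demanding the most care is the verification that a $\ZZ\overline{\Gamma}$-lattice in the genus of $\ZZ\overline{\Gamma}$ is automatically $\ZZ\overline{\Gamma}$-projective, as this is what produces the crucial vanishing $H^1(K,P')=0$ and hence the isomorphism of functors. Without it one is only entitled to say that $P$ and $P'$ are $\ell$-isogenous for every prime $\ell$, which combined with Theorem~\ref{thm:edQuotient} and the additivity theorem would deliver the $\ell$-dimension half of the statement but leaves the absolute essential dimension untouched; it is the upgrade from local freeness to genuine projectivity that lets the whole proposition be handled uniformly.
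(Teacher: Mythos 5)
Your proposal is correct, and it reaches the conclusion by a genuinely different (and in fact stronger) route than the paper. The paper starts from the same place — Ro\v{\i}ter's theorem gives $X(T)\oplus\ZZ\overline{\Gamma}\simeq X(T')\oplus L$ with $L$ in the genus of $\ZZ\overline{\Gamma}$, hence $T\times S\simeq T'\times S'$ with $S=\Diag(\ZZ\overline{\Gamma})$ and $S'=\Diag(L)$ — but then it only establishes $\ed_k(S)=\ed_k(S')=0$ (for $S$ via a generically free representation of dimension $\dim S$, for $S'$ via the direct-factor inequality of \cite[Remark 1.16(b)]{BF} applied to $L$ as a summand of $\ZZ\overline{\Gamma}\oplus\ZZ\overline{\Gamma}$) and concludes by sandwiching: $\ed_k(T)\le\ed_k(T\times S)=\ed_k(T'\times S')\le\ed_k(T')+\ed_k(S')=\ed_k(T')$, with the $\ell$-local statement obtained by ``a similar argument.'' You instead upgrade $\ed=0$ to the vanishing $H^1(K,P)=H^1(K,P')=\{1\}$ for every $K/k$, which yields a natural isomorphism of functors $H^1(*,T)\simeq H^1(*,T')$; this is a stronger conclusion, and it disposes of $\ed_k$ and all the $\ed_k(\cdot;\ell)$ uniformly rather than by repeating the inequality chain. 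Note that both arguments ultimately rest on the same input, namely that a lattice in the genus of $\ZZ\overline{\Gamma}$ is a direct summand of a free lattice; you justify this by ``locally free at every prime implies projective'' (a standard fact for lattices over orders, cf.\ \cite[\S 8]{CR}), while the paper asserts the specific statement $L\oplus L''\simeq\ZZ\overline{\Gamma}\oplus\ZZ\overline{\Gamma}$ without proof — either justification is acceptable, but you should cite one. Your closing diagnosis of what survives without projectivity (only the $\ell$-isogeny, hence only the $\ed(\cdot;\ell)$ half via Theorem~\ref{thm:edQuotient}) is accurate.
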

\begin{proof}
Let $\Gal(\sep{k}/k)$ act through a finite quotient $\overline{\Gamma}$ on $X(T)$ and $X(T')$. 
By assumption there exists a $\ZZ \overline{\Gamma}$-lattice $L$ in the genus of $\ZZ \overline{\Gamma}$ such that $X(T)\oplus \ZZ \overline{\Gamma} \simeq X(T')\oplus L$. 
The torus $S=\Diag(\ZZ \overline{\Gamma})$ has a generically free representation of dimension $\dim S$, hence $\ed_k(S)=0$. 
Since $L$ is a direct summand of $\ZZ\overline{\Gamma} \oplus \ZZ\overline{\Gamma}$ the torus $S'\colonequals \Diag(L)$ has $\ed_k(S')\leq \ed_k(S\times S)\leq 0$ as well, where the first inequality follows from \cite[Remarks 1.16 (b)]{BF}. 
Therefore \[\ed_k(T)\leq \ed_k(T\times S)=\ed_k(T'\times S')\leq \ed_k(T')+\ed_k(S') = \ed_k(T')\] and similarly $\ed_k(T')\leq \ed_k(T)$. Hence $\ed_k(T)=\ed_k(T')$. \par
A similar argument shows that $\ed_k(T;\ell)=\ed_k(T';\ell)$ for any prime $\ell$. 
This concludes the proof.
\end{proof}

\begin{cor}
\label{cor.absoluteED}
Let $k=\pcl{k}$ be a $p$-closed field and $T$ a $k$-torus. Then
\[\ed_k(T)=\ed_k(T;p)=\min \dim(\rho) - \dim T,\] where the minimum is taken over all $p$-faithful representations of $T$.
\end{cor}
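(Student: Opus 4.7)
The plan is to combine Theorem~\ref{thm:MainTheorem1}(b) (with trivial finite $p$-group factor) with Proposition~\ref{prop.equalED}. Theorem~\ref{thm:MainTheorem1}(b) applied to $G=T$ yields immediately
\[
\ed_k(T;p) = \min \dim(\rho) - \dim T,
\]
the minimum taken over $p$-faithful representations $\rho$ of $T$. Since $\ed_k(T;p)\le \ed_k(T)$ is automatic, the task reduces to proving the reverse inequality $\ed_k(T)\le \ed_k(T;p)$.

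To this end, I would pick a $p$-faithful representation $\rho\colon T\to \GL(V)$ realizing the minimum above and set $N\colonequals \ker\rho$. By definition of $p$-faithful, $N$ is finite of order prime to $p$, so $T'\colonequals T/N$ is again a torus of dimension $\dim T$, and $\rho$ descends to a faithful representation $\bar\rho\colon T'\to \GL(V)$. A faithful representation of a torus is automatically generically free: after passing to $\sep{k}$ and decomposing $V$ into weight spaces, a general point has stabilizer $\bigcap_i \ker\chi_i=\{1\}$, since the weights $\chi_i$ generate $X(T')$ by faithfulness. Hence the upper bound~\eqref{e.upper-bound} gives
\[
\ed_k(T')\le \dim\rho - \dim T' = \ed_k(T;p).
\]

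It then remains to show $\ed_k(T)=\ed_k(T')$, for which I would verify that the character lattices $X(T)$ and $X(T')$ lie in the same genus and invoke Proposition~\ref{prop.equalED}. By Proposition~\ref{ptopprop}\ref{ptop2}, $\Gamma=\Gal(\sep{k}/k)$ is pro-$p$, so it acts on both lattices through a finite $p$-group $\overline{\Gamma}$. The $p$-isogeny $T\to T'$ dualizes to an injection $X(T')\hookrightarrow X(T)$ of $\ZZ\overline{\Gamma}$-modules with cokernel finite of order prime to $p$; in particular $X(T)_{(p)}\simeq X(T')_{(p)}$ and $X(T)\otimes\QQ \simeq X(T')\otimes\QQ$. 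For any prime $\ell\ne p$, $|\overline{\Gamma}|$ is a unit in $\ZZ_{(\ell)}$, so $\ZZ_{(\ell)}[\overline{\Gamma}]$ is a maximal order in the semisimple algebra $\QQ[\overline{\Gamma}]$; since $\ZZ_{(\ell)}$ is a DVR (with trivial ideal class group), lattices over this maximal order are determined up to isomorphism by their $\QQ$-rationalizations, giving $X(T)_{(\ell)}\simeq X(T')_{(\ell)}$. This completes the genus check, and Proposition~\ref{prop.equalED} yields $\ed_k(T)=\ed_k(T')\le \ed_k(T;p)$, as required.

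The main subtle point is the last step: the integral-representation-theoretic fact that, over a maximal order over a DVR, lattices are classified by their rationalization. The hypothesis $k=\pcl{k}$ is essential here, since it is what makes $\overline{\Gamma}$ a $p$-group and hence ensures that $\ell\nmid |\overline{\Gamma}|$ for every $\ell\ne p$. The remaining ingredients are straightforward applications of results already established.
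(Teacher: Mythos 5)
Your proposal is correct and follows essentially the same route as the paper: Theorem~\ref{thm:MainTheorem1}(b) (equivalently, the lower bound of (a) plus the upper bound from Section~7) gives the second equality, and the reverse inequality $\ed_k(T)\le\ed_k(T;p)$ is obtained exactly as in the paper by passing to $T'=T/\ker\rho$, noting that $X(T)$ and $X(T')$ lie in the same genus because $\overline{\Gamma}$ is a $p$-group, and invoking Proposition~\ref{prop.equalED}. The only differences are expository: you spell out the genus check at primes $\ell\ne p$ via the maximal-order argument (which the paper leaves implicit in its remark that genus need only be checked at primes dividing $|\overline{\Gamma}|$) and give a direct weight-space argument for generic freeness where the paper appeals to Lemma~\ref{lem.rigid}.
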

\begin{proof}
The second equality follows from Theorem \ref{thm:MainTheorem1}(a) and the inequality $\ed_k(T;p)\leq \ed_k(T)$ is clear. Hence it suffices to show $\ed_k(T)\leq \ed_k(T;p)$. 
Let $\rho \colon T \to \GL(V)$ be a $p$-faithful representation of minimal dimension so that $\ed_k(T;p)=\dim \rho - \dim T$. 
The representation $\rho$ can be considered as a faithful representation of the torus $T'= T/N$ where $N\colonequals \ker \rho$ is finite of order prime to $p$. 
By construction the character lattices $X(T)$ and $X(T')$ are isomorphic after localization at $p$. 
Since $\Gal(\sep{k}/k)$ is a (profinite) $p$-group it follows that $X(T)$ and $X(T')$ belong to the same genus. Hence by Proposition \ref{prop.equalED} we have $\ed_k(T')=\ed_k(T)$. 
Moreover $\ed_k(T')\leq \dim \rho - \dim T'$, since $\rho$ is a generically free representation of $T'$. 
This finishes the proof.
\end{proof}

\begin{proof}[Proof of Theorem~\ref{thm:MainTheorem1}(b)]
The equality $\ed_{\pcl{k}}(G_{\pcl{k}};p)=\ed_k(G;p)$ follows from Lemma \ref{lem:edPrimeToPClosure}. 
Now we are assuming $G=T \times F$ for a torus $T$ and a $p$-group $F$ over $k$, which is $p$-closed. 
Notice that a minimal $p$-faithful representation of $F$ from Theorem~\ref{thm:MainTheorem1}(a) is also faithful, and therefore $\ed_k(F;p)=\ed_k(F)$. 
Combining this with Corollary~\ref{cor.absoluteED} and the additivity Theorem~\ref{thm.additive}, we see 
\[ \ed(T\times F) \leq \ed(T) + \ed(F) = \ed(T;p) + \ed(F;p) = \ed(T\times F;p) \leq \ed(T \times F). \]
This completes the proof.
\end{proof}

\begin{rem} \label{rem.p-faithful}
The following example shows that ``$p$-faithful" cannot be replaced
by ``faithful" in the statement of
Theorem~\ref{thm:MainTheorem1}(a) (and Corollary~\ref{cor.absoluteED}), even in the case where $G$ is a
torus.

Let $p$ be a prime number such that the ideal class group of
$\QQ(\zeta_p)$ is non-trivial (this applies to all but finitely many
primes, e.g.~to $p=23$). This means that the subring $R=\ZZ[\zeta_p]
\subseteq \QQ(\zeta_p)$ of algebraic integers has non-principal
ideals. Let $k$ be a field which admits a Galois extension $l$ of
degree $p$ and let $\Gamma\colonequals \Gal(\sep{k}/k)$,
$\overline{\Gamma}\colonequals \Gal(l/k)\simeq \Gamma/\Gamma_l
\simeq C_p$ where $\Gamma_l = \Gal(\sep{k}/l)$ and $C_p$ denotes the cyclic group of order $p$. \par We endow the
ring $R$ with a $\ZZ \Gamma$-module structure through the quotient
map $\Gamma \to \overline{\Gamma}$ by letting a generator of
$\overline{\Gamma}$ act on $R$ via multiplication by $\zeta_p$. The
$k$-torus $Q\colonequals \Diag(R)$ is isomorphic to the Weil
restriction $R_{l/k}(\Gm)$ and has a $p$-dimensional faithful
representation. We will construct a $k$-torus $G$ with a $p$-isogeny
$G\to Q$, such that $G$ does not have a $p$-dimensional faithful
representation. \par Let $I$ be a non-principal ideal of $R$. We may
consider $I$ as a $\ZZ \Gamma$-module and set $G\colonequals
\Diag(I)$. We first show that $I$ and $R$ become isomorphic as
$\ZZ\Gamma$-modules after localization at $p$. For this purpose let
$I^\ast=\{x \in \QQ(\zeta_p) \mid xI\subseteq R\}$ denote the
inverse fractional ideal. We have $I\oplus I^\ast \simeq R \oplus R$
by \cite[Theorem 34.31]{CR}. The Krull-Schmidt Theorem \cite[Theorem
36.1]{CR} for $\ZZ_{(p)} C_p$-lattices implies $I_{(p)} \simeq
R_{(p)}$, hence the claim. Therefore by Lemma \ref{lem.pisogeny}
there exists a $p$-isogeny $G\to Q$, which shows in particular that
$G$ has a $p$-faithful representation of dimension $p$. \par Assume
that $G$ has a $p$-dimensional faithful representation. Similarly as
in the proof of Corollary \ref{cor.lattice} this would imply the
existence of a surjective map of $\ZZ\Gamma$-lattices
$\ZZ\overline{\Gamma}\to I$. However such a map cannot exist since
$I$ is non-principal, hence non-cyclic as a $\ZZ\Gamma$-module.
\end{rem}

\section{Tori of essential dimension $\le 1$}
\label{sect.ed<=1}

\label{subsec:SpecialTori}

\begin{thm} \label{thm.ed=0}
Let $T$ be a torus over $k$, $\pcl{k}$ a $p$-closure and $\Gamma=\Gal(\alg{k}/\pcl{k})$.
The following are equivalent:
\begin{enumerate}[label=(\alph*), ref=(\alph*)]
\label{prop:pSpecialTori}
\item\label{pSpecial1} $\ed_k(T;p)=0$.
\item\label{pSpecial5} $\ed_{\pcl{k}}(T;p)=0$.
\item\label{pSpecial8} $\ed_{\pcl{k}}(T)=0$
\item\label{pSpecial6} $H^1(K,T)=\{1\}$ for any $p$-closed field $K$ 
containing $k$.
\item\label{pSpecial3} $X(T)_{(p)}$ is a $\ZZ_{(p)}\Gamma$-permutation module.
\item\label{pSpecial7} $X(T)$ is an invertible $\ZZ\Gamma$-lattice 
(i.e a direct summand of a permutation lattice).
\item\label{pSpecial4} There is a torus $S$ over $\pcl{k}$ and an isomorphism
\[ T_{\pcl{k}}\times S\simeq \R_{E/\pcl{k}}(\Gm), \]
for some \'etale algebra $E$ over $\pcl{k}$.
\end{enumerate}
\end{thm}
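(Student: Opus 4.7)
The plan is to establish a ring of implications linking the seven conditions.

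\emph{Formal equivalences.} I would first dispatch three pairs that follow formally from preceding results. The equivalence (a) $\Leftrightarrow$ (b) is Lemma~\ref{lem:edPrimeToPClosure}(b) applied to the limit-preserving functor $H^1(\ast,T)$. The equivalence (b) $\Leftrightarrow$ (c) is Corollary~\ref{cor.absoluteED} specialized to the $p$-closed base $\pcl{k}$. The equivalence (f) $\Leftrightarrow$ (g) is a direct translation under the anti-equivalence $\Diag$: a splitting $X(T)\oplus X(S)\simeq \ZZ[\Lambda]$ of $\ZZ\overline{\Gamma}$-lattices corresponds to an isomorphism $T_{\pcl{k}}\times S\simeq \R_{E/\pcl{k}}(\Gm)$ with $E$ the \'etale algebra attached to the $\overline{\Gamma}$-set $\Lambda$.

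\emph{Essential dimension and lattices.} For (c) $\Leftrightarrow$ (e), I would invoke the torus form of Corollary~\ref{cor.lattice}. If $\ed_{\pcl{k}}(T)=0$, there exists a permutation $\ZZ\overline{\Gamma}$-lattice $L$ of rank $\dim T$ admitting a map $L\to X(T)$ of finite prime-to-$p$ cokernel; since the ranks agree the map is injective, and $p$-localizing yields $X(T)_{(p)}\simeq L_{(p)}$, a permutation $\ZZ_{(p)}\overline{\Gamma}$-module. Conversely, if $X(T)_{(p)}\simeq \ZZ_{(p)}[\Lambda]$ is permutation, then setting $L=\ZZ[\Lambda]$, Lemma~\ref{lem.pisogeny} produces a $\ZZ\overline{\Gamma}$-map $L\to X(T)$ of prime-to-$p$ cokernel, so Corollary~\ref{cor.lattice} gives $\ed_{\pcl{k}}(T)\leq \rk L - \dim T = 0$.

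\emph{Cohomological side.} For (f) $\Rightarrow$ (d): if $X(T)$ is a summand of $\ZZ[\Lambda]$, then $T$ is a direct factor of the quasi-split torus $\R_{E/\pcl{k}}(\Gm)$, so $H^1(K,T)$ is a factor of $H^1(K,\R_{E/\pcl{k}}(\Gm))=H^1(K\otimes_{\pcl{k}}E,\Gm)=0$ by Shapiro's lemma and Hilbert~90. For general $p$-closed $K\supseteq k$, the algebraic closure of $k$ in $K$ is a $p$-closed algebraic extension of $k$ and can be identified with $\pcl{k}$, reducing to the previous case. For (d) $\Rightarrow$ (a): given any $\alpha\in H^1(K,T)$, the class $\alpha_{\pcl{K}}$ is trivial by (d), so $\ed_k(\alpha_{\pcl{K}})=0$, and Lemma~\ref{lem:edPrimeToPClosure}(a) then gives $\ed_k(\alpha;p)=0$.

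\emph{The main obstacle.} The remaining implication (e) $\Rightarrow$ (f) is where the argument is most delicate: passing from a permutation structure on $X(T)_{(p)}$ to global invertibility of $X(T)$ requires a lifting argument. I would exploit that $\overline{\Gamma}$ is a finite $p$-group (since $\Gamma=\Gal(\alg{k}/\pcl{k})$ is pro-$p$). Krull--Schmidt holds for $\ZZ_{(p)}\overline{\Gamma}$-lattices because each transitive permutation $\ZZ_{(p)}[\overline{\Gamma}/H]$ has local endomorphism ring (its reduction $\FF_p[\overline{\Gamma}/H]$ is local). Combined with Ro\u{\i}ter's theorem (\cite[Theorem 31.28]{CR}) this allows one to conclude $X(T)\oplus \ZZ\overline{\Gamma}^n\simeq P\oplus L$, where $P=\ZZ[\Lambda]$ is a permutation $\ZZ\overline{\Gamma}$-lattice lifting $X(T)_{(p)}$ and $L$ is stably permutation, realizing $X(T)$ as a direct summand of a permutation lattice. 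The subtle point is verifying that $X(T)$ and $P$ lie in the same genus at primes $q\neq p$, which holds because $\ZZ_{(q)}\overline{\Gamma}$ is a maximal order (by Maschke, as $q\nmid|\overline{\Gamma}|$) and the two lattices share the same $\QQ\overline{\Gamma}$-character; a Steinitz-class argument then gives the needed isomorphism at each $q$.
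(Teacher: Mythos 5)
Your proof is correct, and although its lattice-theoretic core coincides with the paper's, you route several implications differently. The paper handles condition (d) by citing \cite[Proposition 4.4]{Me1} for (a) $\Leftrightarrow$ (d); you instead prove (f) $\Rightarrow$ (d) via Shapiro's lemma and Hilbert 90 and (d) $\Rightarrow$ (a) via Lemma~\ref{lem:edPrimeToPClosure}, which is self-contained and closes the cycle without the external reference. One small correction there: the algebraic closure of $k$ in a $p$-closed $K$ need not itself be a $p$-closure of $k$ (it can be strictly larger); what you actually need, and what the Sylow argument supplies, is only that every $p$-closed $K \supseteq k$ contains \emph{some} $p$-closure of $k$, after which base change does the rest. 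You also get (b) $\Leftrightarrow$ (c) from Corollary~\ref{cor.absoluteED}, whereas the paper uses only the trivial direction (c) $\Rightarrow$ (b) and recovers (c) through (g) $\Rightarrow$ (c) (a quasi-split torus has a faithful representation of dimension equal to its own dimension, hence essential dimension $0$, and a direct factor inherits this). Finally, for (e) $\Rightarrow$ (f) the paper simply cites \cite[Corollary 31.7]{CR}, while you reconstruct the argument: agreement of genera at primes $q \ne p$ because $\ZZ_{(q)}\overline{\Gamma}$ is a maximal order for the $p$-group $\overline{\Gamma}$, then Ro\v{\i}ter's theorem to write $X(T)\oplus\ZZ\overline{\Gamma}\simeq P\oplus L$ with $L$ locally free, hence projective, hence a summand of a free module. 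This makes explicit why $p$-local data suffice, at the cost of more machinery; the appeal to Krull--Schmidt in that step is not actually needed. Both routes are sound.
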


\begin{rem}
A prime $p$ for which any of these statements fails is 
called a {\em torsion prime} of $T$.

\end{rem}

\begin{proof}
\ref{pSpecial1} $\Leftrightarrow$ \ref{pSpecial5} is Lemma~\ref{lem:edPrimeToPClosure}.

\ref{pSpecial1} $\Leftrightarrow$ \ref{pSpecial6} follows from \cite[Proposition 4.4]{Me1}.

\ref{pSpecial8} $\Rightarrow$ \ref{pSpecial5} is clear.

\ref{pSpecial5} $\Rightarrow$ \ref{pSpecial3}: This follows
from Corollary~\ref{cor.lattice}.
Indeed, $\ed_k(T;p)=0$ implies the existence of a $\ZZ_{(p)}\Gamma$-permutation lattice $L$ together with a surjective homomorphism $\alpha:L\to X(T)_{(p)}$ and $\rk L = \rk X(T)_{(p)}$.
It follows that $\alpha$ is injective and $X(T)_{(p)}\simeq L$.

\ref{pSpecial3} $\Rightarrow$ \ref{pSpecial7}: Let $L$ be a $\ZZ\Gamma$-permutation lattice such that $L_{(p)}\simeq X(T)_{(p)}$.
Then by \cite[Corollary 31.7]{CR} there is 
a $\ZZ\Gamma$-lattice $L^\prime$ such that 
$L\oplus L\simeq X(T)\oplus L^\prime$.

\ref{pSpecial4} $\Rightarrow$ \ref{pSpecial8}:
The torus $R=\R_{E/\pcl{k}}(\Gm)$ has a faithful representation 
of dimension $\dim R$ (over $\pcl{k}$) and hence $\ed_{\pcl{k}}(R) = 0$.
Since $T_{\pcl{k}}$ is a direct factor of $R$ we must have 
$\ed_{\pcl{k}}(T)\le 0$ by \cite[Remarks 1.16 b)]{BF}.

\ref{pSpecial7} $\Leftrightarrow$ \ref{pSpecial4}: A permutation 
lattice $P$ can be written as
\[ P=\bigoplus_{i+1}^m\ZZ[\Gamma/\Gamma_{L_i}],\]
for some (separable) extensions $L_i/\pcl{k}$ and
$\Gamma_{L_i}=\Gal(\alg{k}/L_i)$. Set $E=L_1\times\cdots\times L_m$.
The torus corresponding to $P$ is exactly $\R_{E/\pcl{k}}(\Gm)$, cf.
\cite[3. Example 19]{voskresenskii}.
\end{proof}

\begin{exa}
Let $T$ be a torus over $k$ of rank $<p-1$. 
Then $\ed_k(T;p)=0$.
This follows from the fact that there is no non-trivial integral representation of dimension $<p-1$ of any $p$-group, see for example \cite[Satz]{AP}.
Thus any finite quotient of $\Gamma=\Gal(\alg{k}/\pcl{k})$ acts trivially on $X(T)$ and so does $\Gamma$.
\end{exa}

\begin{rem}
The equivalence of parts \ref{pSpecial6} 
and \ref{pSpecial7} in Theorem~\ref{thm.ed=0}
can also be deduced from~\cite[Proposition 7.4]{CTS}.
\end{rem}

\begin{thm}\label{thm.ed=1}

Let $p$ be an odd prime, $T$ an algebraic torus over $k$, and $\Gamma = \Gal(\alg{k}/k^{(p)})$. \hfill
\begin{enumerate}[label=(\alph*), ref=(\alph*)]
\item \label{ed1a} $\ed(T;p) \leq 1$ iff there exists a $\Gamma$-set $\Lambda$ and an $m\in \ZZ[\Lambda]$ fixed by $\Gamma$ such that $X(T)_{(p)} \cong \ZZ_{(p)}[\Lambda] / \langle m \rangle $ as $\ZZ_{(p)} \Gamma$-lattices.
\item \label{ed1b} $\ed(T;p)=1$ iff $m=\sum a_{\lambda} \lambda$ from part \ref{ed1a} is not $0$ and for any $\lambda \in \Lambda$ fixed by $\Gamma$, $a_{\lambda}=0 \mod p$.
\item \label{ed1c} If $\ed(T;p)=1$ then $T_{\pcl{k}}\cong T' \times S$ where $\ed_{\pcl{k}}(S;p)=0$ and $X(T')_{(p)}$ is an indecomposable $\ZZ_{(p)} \Gamma$-lattice, and $\ed_{\pcl{k}}(T';p)=1$.
\end{enumerate}

\end{thm}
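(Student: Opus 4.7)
The plan is to leverage the lattice description of essential $p$-dimension from Corollary~\ref{cor.lattice}. Throughout, let $\overline{\Gamma}$ denote the finite image of $\Gamma$ acting on $X(T)$.

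For part (a), the second assertion of Corollary~\ref{cor.lattice} gives $\ed_k(T;p)\le 1$ precisely when there is a surjection $\alpha\colon\ZZ_{(p)}[\Lambda]\twoheadrightarrow X(T)_{(p)}$ of $\ZZ_{(p)}\overline{\Gamma}$-modules with $|\Lambda|-\dim T\le 1$, equivalently with $\ker\alpha$ of $\ZZ_{(p)}$-rank $0$ or $1$. The rank-$0$ case corresponds to $\ed(T;p)=0$ and is realized by $m=0$. In the rank-$1$ case, the kernel is a rank-one $\ZZ_{(p)}\overline{\Gamma}$-lattice; since $\overline{\Gamma}$ is a finite $p$-group, $p$ is odd, and any action on $\ZZ_{(p)}$ factors through the torsion subgroup of $\ZZ_{(p)}^\times$ (which has order prime to $p$), this action must be trivial. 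Hence $\ker\alpha=\ZZ_{(p)}\cdot m$ for some $\overline{\Gamma}$-fixed $m$, and after clearing a prime-to-$p$ denominator we may take $m\in\ZZ[\Lambda]^{\Gamma}$. The reverse implication is immediate.

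For part (b), combining (a) with Theorem~\ref{thm.ed=0} reduces the claim to: $\ZZ_{(p)}[\Lambda]/\langle m\rangle$ is a permutation $\ZZ_{(p)}\overline{\Gamma}$-lattice iff $m=0$ or some $\Gamma$-fixed $\lambda^*$ has $a_{\lambda^*}\in\ZZ_{(p)}^\times$. If $a_{\lambda^*}$ is a unit, solving the relation $a_{\lambda^*}\lambda^*\equiv-\sum_{\lambda\ne\lambda^*}a_\lambda\lambda\pmod{\langle m\rangle}$ identifies the quotient with the permutation module $\ZZ_{(p)}[\Lambda\setminus\{\lambda^*\}]$. Conversely, if the quotient is permutation then, by Shapiro's lemma, $\op{Ext}^1_{\ZZ_{(p)}\overline{\Gamma}}(\ZZ_{(p)}[\overline{\Gamma}/H],\ZZ_{(p)})=H^1(H,\ZZ_{(p)})=\Hom(H,\ZZ_{(p)})=0$, so the extension
\[
0\to\ZZ_{(p)}\xrightarrow{\cdot m}\ZZ_{(p)}[\Lambda]\to X(T)_{(p)}\to 0
\]
splits. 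Any $\ZZ_{(p)}\overline{\Gamma}$-equivariant retraction $r\colon\ZZ_{(p)}[\Lambda]\to\ZZ_{(p)}$ is constant on $\overline{\Gamma}$-orbits, say $r(\lambda)=r_O$ for $\lambda\in O$; writing $m=\sum_O a_O N_O$ with $N_O$ the orbit sum, the identity $r(m)=1$ becomes $\sum_O a_O|O|r_O=1$ in $\ZZ_{(p)}$. For non-fixed orbits $|O|$ is a nontrivial power of $p$, so $a_O|O|\in p\ZZ_{(p)}$; only fixed orbits $\{\lambda^*\}$ can yield a unit, forcing $a_{\lambda^*}\in\ZZ_{(p)}^\times$, which proves the contrapositive.

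For part (c), apply Krull--Schmidt for $\ZZ_{(p)}\overline{\Gamma}$-lattices to decompose $X(T_{\pcl{k}})_{(p)}=L_1\oplus\cdots\oplus L_r$ into indecomposable summands. Using additivity (Theorem~\ref{thm.additive}) together with Theorem~\ref{thm.ed=0}, exactly one $L_i$, say $L_1$, fails to be permutation: if all were permutation then $\ed=0$, while two non-permutation summands would force $\ed\ge 2$. I would then lift this to a $\ZZ\overline{\Gamma}$-splitting $X(T_{\pcl{k}})=M\oplus N$ with $M_{(p)}\cong L_1$ and $N_{(p)}\cong L_2\oplus\cdots\oplus L_r$, using that $\overline{\Gamma}$ is a $p$-group: at every prime $q\ne p$ the ring $\ZZ_{(q)}\overline{\Gamma}$ is semisimple, so any rational decomposition integralizes away from $p$, and matching it with the $\ZZ_{(p)}$-decomposition gives an integral one. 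Setting $T'=\Diag(M)$ and $S=\Diag(N)$ yields $T_{\pcl{k}}\cong T'\times S$ with $X(T')_{(p)}=L_1$ indecomposable. Theorem~\ref{thm.ed=0} gives $\ed_{\pcl{k}}(S;p)=0$, and additivity then gives $\ed_{\pcl{k}}(T';p)=1$.

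The main technical difficulty is the converse direction in (b), where the interaction of the coefficients $a_\lambda$ with the orbit sizes modulo $p$ must be traced precisely through the splitting criterion. Part (c) also requires the integral lifting of the $\ZZ_{(p)}$-decomposition, which is where the $p$-group hypothesis on $\overline{\Gamma}$ is essential: it is what guarantees that the only prime where $\ZZ\overline{\Gamma}$ fails to be semisimple is $p$ itself.
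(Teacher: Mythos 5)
Your proposal is correct and follows essentially the same route as the paper: Corollary~\ref{cor.lattice} plus the oddness of $p$ (and the pro-$p$ structure of $\Gamma$) for (a), the splitting of $0\to\langle m\rangle\to\ZZ_{(p)}[\Lambda]\to X(T)_{(p)}\to 0$ and a mod-$p$ count over $\Gamma$-orbits for (b), and Krull--Schmidt, additivity, and the lifting of $\ZZ_{(p)}$-decompositions to integral ones for (c). The only differences are cosmetic: you prove the vanishing of $\op{Ext}^1$ via Shapiro's lemma where the paper cites Colliot-Th\'el\`ene--Sansuc, and you sketch the lifting argument that the paper delegates to \cite[31.12]{CR}.
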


\begin{proof}
\ref{ed1a} If $\ed(T;p)=1$, then by Corollary~\ref{cor.lattice}
there is a map of $\ZZ \Gamma$-lattices
from $\ZZ[\Lambda]$ to $X(T)$ which becomes surjective after 
localization at $p$ and whose kernel
is generated by one element. Since the kernel is stable
under $\Gamma$, any element of $\Gamma$ sends
a generator $m$ to either itself or its negative.
Since $p$ is odd, $m$ must be fixed by $\Gamma$.

The $\ed(T;p)=0$ case and the converse follows
from Theorem~\ref{thm:MainTheorem2} or Corollary~\ref{cor.lattice}.

\ref{ed1b} Assume we are in the situation of \ref{ed1a},
and say $\lambda_0 \in \Lambda$ is fixed by $\Gamma$
and $a_{\lambda_0}$ is not $0 \mod p$. Then
$X(T)_{(p)} \cong \ZZ_{(p)} [\Lambda - \{ \lambda_0 \}]$,
so by Theorem \ref{thm.ed=0} we have $\ed(T;p)=0$.

Conversely, assume $\ed(T;p)=0$. 
Then by Theorem \ref{thm.ed=0}, we have an exact sequence
$ 0 \to \langle m \rangle \to \ZZ_{(p)}[\Lambda] \to 
\ZZ_{(p)}[\Lambda'] \to 0 $  
for some $\Gamma$-set $\Lambda'$ with one fewer element than $\Lambda$.
We have 
\[ \operatorname{Ext}^1_{\Gamma}(\ZZ_{(p)}[\Lambda'], \ZZ_{(p)}) = (0) \]
by \cite[Key Lemma 2.1(i)]{CTS} together with the Change of Rings Theorem \cite[8.16]{CR}; therefore this sequence splits. In other words, 
there exists a $\ZZ_{(p)}\Gamma$-module homomorphism
$ f \colon \ZZ_{(p)}[\Lambda] \to \ZZ_{(p)}[\Lambda]$ 
such that the image of $f$
is $\langle m \rangle$ and $f(m) = m$. Then we can define $c_{\lambda} \in \ZZ_{(p)}$ by
$f(\lambda) = c_{\lambda} m$.
Note that $f(\gamma(\lambda)) = f(\lambda)$ and thus  
\begin{equation} \label{e.invariance}
c_{\gamma(\lambda)} = c_{\lambda}
\end{equation}
for every $\lambda \in \Lambda$ and $\gamma \in \Gamma$.
If $m = \sum_{\lambda \in \Lambda} a_{\lambda} \lambda$, as 
in the statement of the theorem, then $f(m) = m$ translates into
\[ \sum_{\lambda \in \Lambda} c_{\lambda} a_{\lambda} = 1 \, . \]
Since every $\Gamma$-orbit in $\Lambda$ has a power of $p$ elements,
reducing modulo $p$, we obtain
\[ \sum_{\lambda \in \Lambda^{\Gamma}} c_{\lambda} a_{\lambda} = 1 \pmod{p}
 \, . \]
This shows that $a_{\lambda} \ne 0$ modulo $p$,
for some $\lambda \in \Lambda^{\Gamma}$, as claimed.

\ref{ed1c} Decompose $X(T)_{(p)}$ uniquely into a direct sum of
indecomposable $\ZZ_{(p)} \Gamma$-lattices by the Krull-Schmidt
theorem \cite[Theorem 36.1]{CR}. Since $\ed(T;p)=1$, and the
essential $p$-dimension of tori is additive (Thm.\
\ref{thm.additive}), all but one of these summands are permutation
$\ZZ_{(p)} \Gamma$-lattices. Now by \cite[31.12]{CR}, we can lift
this decomposition to $X(T) \cong X(T') \oplus X(S)$, where
$\ed(T';p)=1$ and $\ed(S;p)=0$.
\end{proof}

\begin{exa}
Let $E$ be an \'etale algebra over $k$. It can be written as
$E=L_1\times\cdots\times L_m$ with some separable field extensions
$L_i/k$. The kernel of the norm $\R_{E/k}(\Gm)\to \Gm$ is denoted by
$\R_{E/k}^{(1)}(\Gm)$. It is a torus with lattice
\[ \bigoplus_{i=1}^m\ZZ[\Gamma/\Gamma_{L_i}]\;/\;\langle 1, \cdots , 1\rangle,\]
where $\Gamma=\Gal(\sep{k}/k)$ and $\Gamma_{L_i}=\Gal(\sep{k}/L_i)$.
Let $\Lambda$ be the disjoint union of the cosets $\Gamma/\Gamma_{L_i}$
Passing to a $p$-closure $\pcl{k}$ of $k$, $\Gamma_{\pcl{k}}$ 
fixes a $\lambda$ in $\Lambda$ iff $[L_i:k]$ is prime to $p$ for some $i$.
We thus have
\[ \ed_{k}(\R_{E/k}^{(1)}(\Gm);p)=\left\{
\begin{array}{ll}
1, &\mbox{$[L_i:k]$ is divisible by $p$ for all $i=1,...,m$}\\
0, &\mbox{$[L_i:k]$ is prime to $p$ for some $i$.}
\end{array}\right. \]
\end{exa}

\section{Tori split by cyclic extensions of degree dividing $p^2$}
\label{sect.splitsquare}
In this section we assume $k=\pcl{k}$ is $p$-closed.
Over $k=\pcl{k}$ every torus is split by a Galois extension of $p$-power order. We wish to compute the essential dimension of all tori split by a Galois extension with a (small) fixed Galois group $G$. 
The following theorem tells us for which $G$ this is feasible:
\begin{thm}[A.~Jones~\cite{Jo}]
\label{thm.Jones}
For a $p$-group $G$ there are only finitely many genera of indecomposable $\ZZ G$-lattices if and only if $G$ is cyclic of order dividing $p^2$. 
\end{thm}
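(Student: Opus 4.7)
The plan is to reduce the question to one over the completed group ring $\ZZ_p[G]$. Since $G$ is a $p$-group, for every prime $q\ne p$ the algebra $\ZZ_q[G]$ is a maximal order in $\QQ_q[G]$, so all $\ZZ_q G$-lattices of a given rational type are isomorphic. Hence two $\ZZ G$-lattices lie in the same genus if and only if their $p$-adic completions are $\ZZ_p G$-isomorphic, and the theorem is equivalent to the statement that $\ZZ_p[G]$ has \emph{finite lattice type} (only finitely many isomorphism classes of indecomposable lattices) if and only if $G$ is cyclic of order $1$, $p$, or $p^2$.

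For the ``if'' direction I would treat the three cases separately. The trivial case is immediate. For $G=C_p$, Reiner's classical theorem identifies exactly three indecomposable $\ZZ_p[C_p]$-lattices: the trivial lattice $\ZZ_p$, the ring of integers $\ZZ_p[\zeta_p]$ in $\QQ_p(\zeta_p)$, and the group ring $\ZZ_p[C_p]$ itself; the proof uses the pullback square
\[
\xymatrix{
\ZZ_p[C_p] \ar[r] \ar[d] & \ZZ_p[\zeta_p] \ar[d] \\
\ZZ_p \ar[r] & \FF_p
}
\]
and a standard Mayer--Vietoris style classification. For $G=C_{p^2}$, I would invoke (and sketch) the Heller--Reiner classification, which iterates this pullback argument along the cyclotomic tower $\ZZ_p\subset\ZZ_p[\zeta_p]\subset\ZZ_p[\zeta_{p^2}]$ and shows that every indecomposable lattice appears in an explicit finite list.

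For the ``only if'' direction, I would produce infinite families of pairwise non-isomorphic indecomposable lattices for $G=C_{p^3}$ and $G=C_p\times C_p$. For $C_{p^3}$, the extra layer in the cyclotomic tower admits a one-parameter family of pullback lattices indexed by $\ZZ_p$-valued gluing data, and a direct endomorphism computation shows that distinct parameters give non-isomorphic indecomposables. For $C_p\times C_p$ a similar two-parameter construction works. Once these two cases are in hand, one observes that any $p$-group not on the list contains $C_{p^3}$ or $C_p\times C_p$ as a subquotient, and infinite lattice type is inherited by overgroups via induction: if $\ZZ_p[H]$ has infinitely many indecomposables and $H\le G$, then the induced modules, after decomposing into indecomposables over $\ZZ_p[G]$, still yield infinitely many isomorphism classes by a Mackey/Krull--Schmidt count.

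The main obstacle is the negative direction: constructing a genuinely infinite family of \emph{non-isomorphic} indecomposables (rather than just infinitely many non-trivial lattices, many of which could a priori collapse into finitely many isomorphism classes). This forces a careful analysis of endomorphism rings of the family members in order to separate parameters, and it is precisely the reason why the critical cases $C_{p^3}$ and $C_p\times C_p$ mark the boundary between tame and wild integral representation type. The reduction step to a general $p$-group, by contrast, is essentially formal once the two base cases are established.
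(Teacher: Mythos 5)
The paper does not prove this statement at all: it is quoted as a theorem of A.~Jones \cite{Jo} and used as a black box (Remark~\ref{rem.Jones} even records that the critical cases $C_{p^3}$ and $C_p\times C_p$ are exactly where classification breaks down), so there is no in-paper argument to compare yours against. On its own terms, your outline reproduces the standard route in the literature: reduce genera to isomorphism classes of $p$-adic lattices, classify in the finite cases via the pullback squares along the cyclotomic tower (Diederichsen--Reiner for $C_p$, Heller--Reiner \cite{HR} for $C_{p^2}$ --- the $4p+1$ lattices appearing in the paper's list $\LL$), exhibit infinite families for $C_{p^3}$ and $C_p\times C_p$, and propagate infinite type to every remaining $p$-group. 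That is the correct architecture.

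Two points need repair or honesty. First, your heredity step is stated only for subgroups ($H\le G$, via induction, Mackey and Krull--Schmidt), but you genuinely need quotients as well: for $p=2$ the generalized quaternion groups are non-cyclic yet have a unique involution, hence contain no subgroup isomorphic to $C_2\times C_2$; it occurs only as the quotient $G/\Phi(G)$. You must therefore add the (easy) complementary observation that inflation along $G\to G/N$ is fully faithful on lattice categories, so it preserves indecomposability and non-isomorphism, and infinite type for a quotient forces infinite type for $G$. (For odd $p$ every non-cyclic $p$-group does contain a normal $C_p\times C_p$, so there induction alone suffices.) Second, the actual mathematical content of the theorem --- the explicit finite lists for $C_p$ and $C_{p^2}$, and above all the construction of infinitely many pairwise non-isomorphic indecomposables for $C_{p^3}$ and for $C_p\times C_p$ together with the endomorphism-ring computations that separate the parameters --- is named in your plan but not carried out; you correctly identify it as the main obstacle, and as written you are deferring it to Heller--Reiner and Nazarova \cite{Na} rather than proving it. For a result the paper itself only cites, that is a reasonable division of labour, but it should be presented as a citation-backed roadmap, not as a self-contained proof.
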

\begin{rem}
\label{rem.Jones}
For $G=C_2\times C_2$ a classification of the (infinitely many) different genera of $\ZZ G$-lattices has been worked out by \cite{Na}. In contrast for $G=C_{p^3}$ or $G=C_p\times C_p$ and $p$ odd (in the latter case) no classification is known.
\end{rem}
Hence in this section we consider tori $T$ whose minimal splitting field is cyclic of degree dividing $p^2$. 
Its character lattice $X(T)$ is then a $\ZZ G$-lattice where
$G=\langle g|g^{p^2}=1\rangle$ denotes the cylic group of order $p^2$. 
Heller and Reiner \cite{HR}, (see also \cite[34.32]{CR}) classified all indecomposable $\ZZ G$-lattices. Our goal consists in computing the essential dimension of $T$. 
By Corollary \ref{cor.absoluteED} we have $\ed_k(T)=\ed_k(T;p)$, hence by the additivity Theorem~\ref{thm.additive} it will be enough to find the essential $p$-dimension of the tori corresponding to indecomposable $\ZZ G$-lattices.
Recall that two lattices are in the same genus if their $p$-localization (or equivalently $p$-adic completion) are isomorphic.
By Proposition~\ref{prop.equalED} tori with character lattices in the same genus have the same essential $p$-dimension, which reduces the task to calculating the essential $p$-dimension of tori corresponding to the $4p+1$ cases in the list \cite[34.32]{CR}.

Denote by $H=\langle\nolinebreak h|h^p=1\nolinebreak\rangle$ the group of order $p$.
We can consider $\ZZ H$ as a $G$-lattice with the action $g\cdot h^i=h^{i+1}$.
Let 
\[
\delta_G=1+g+\ldots+g^{p^2-1}\quad\delta_H=1+h+\ldots+h^{p-1}
\] 
be the ``diagonals" in $\ZZ G$ and $\ZZ H$ and 
\[
\epsilon=1+g^p+\ldots+g^{p^2-p}.
\] 

The following $\ZZ G$-lattices represent all genera of indecomposable $\ZZ G$-lattices
(by $\langle * \rangle$ we mean the $\ZZ G$-sublattice generated by $*$):
\[
\everymath{\displaystyle}
\begin{array}{lcll}
M_1&=&\ZZ&\\
M_2&=&\ZZ H&\\ 
M_3&=&\ZZ H/\langle\delta_H\rangle\\ 
M_4&=&\ZZ G\\
M_5&=&\ZZ G /\langle\delta_G\rangle\\ 
M_6&=&\ZZ G\oplus \ZZ / \langle\delta_G-p\rangle&\\
M_7&=&\ZZ G/\langle\epsilon\rangle\\ 
M_8&=&\ZZ G/\langle\epsilon-g\epsilon\rangle&\\
M_{9,r}&=&\ZZ G\oplus \ZZ H /\langle\epsilon-(1-h)^r\rangle&1\le r\le p-1\\
M_{10,r}&=&\ZZ G\oplus \ZZ H /\langle\epsilon(1-g)-(1-h)^{r+1}\rangle&1\le r\le p-2\\
M_{11,r}&=&\ZZ G\oplus \ZZ H /\langle\epsilon-(1-h)^r,\delta_H\rangle&1\le r\le p-2\\
M_{12,r}&=&\ZZ G\oplus \ZZ H / \langle\epsilon(1-g)-(1-h)^{r+1},\delta_H\rangle&1\le r\le p-2\\
\end{array}
\]
In the sequel we will refer to the above list as $\LL$. \par
In $\LL$ we describe $\ZZ G$-lattices as quotients of permutation lattices of minimal possible rank, whereas \cite[34.32]{CR} describes these lattices as certain extensions $1 \to L \to M \to N \to 1$ of $\ZZ[\zeta_{p^2}]$-lattices by $\ZZ H$-lattices. 
Therefore these two lists look differently. Nevertheless they represent the same $\ZZ G$-lattices.
We show in the example of the lattice $M_{10,r}$ how one can translate from one list to the other.

Let $\ZZ x$ be a $\ZZ G$-module of rank $1$ with trivial $G$-action. We have an isomorphism
\[M_{10,r}=\ZZ G \oplus \ZZ H/\langle \epsilon (1-g) - (1-h)^{r+1} \rangle \simeq \ZZ G\oplus \ZZ H \oplus \ZZ x /\langle \epsilon-(1-h)^r-x\rangle
\] induced by the inclusion $\ZZ G \oplus \ZZ H \hookrightarrow \ZZ G \oplus \ZZ H \oplus \ZZ x$.

This allows us to write $M_{10,r}$ as the pushout
\[
\xymatrix{
\ZZ H\ar@{->}[r]^{h\mapsto \epsilon}\ar@{->}[d]_{h\mapsto (1-h)^r+x}&\ZZ G\ar@{->}[d]\\
\ZZ H\oplus \ZZ x\ar@{->}[r]& M_{10,r}
}
\]
Completing both lines on the right we see that $M_{10,r}$ is an extension
\[
0\to \ZZ H\oplus \ZZ x\to M_{10,r} \to \ZZ G/\ZZ H \to 0
\]
with extension class determined by the vertical 
map $h\mapsto (1-h)^r+x$ cf. \cite[8.12]{CR} and 
we identify (the $p$-adic completion of) $M_{10,r}$ with 
one of the indecomposable lattices in the list \cite[34.32]{CR}. \par
Similarly, $M_1,\ldots,M_{12,r}$ are representatives 
of the genera of indecomposable $\ZZ G$-lattices. 
\begin{thm}
\label{prop.splitsquare}
Every indecomposable torus $T$ over $k$ split by $G$ has character 
lattice isomorphic to one of the $\ZZ G$-lattices $M$ in 
the list $\LL$ after $p$-localization and $\ed(T)=\ed(T;p)=\ed(\Diag(M);p)$.
Their essential dimensions are given in the tables below.
\[
\begin{array}{c|c|c}
M &\rk M&\ed(T)\\
\hline
M_1&1&0\\
M_2&p&0\\
M_3&p-1&1\\
M_4&p^2&0\\
M_5&p^2-1&1\\
M_6&p^2&1
\end{array}
\quad\quad
\begin{array}{c|c|c}
M &\rk M&\ed(T)\\
\hline
M_7&p^2-p&p\\
M_8&p^2-p+1&p-1\\
M_{9,r}&p^2&p\\
M_{10,r}&p^2+1&p-1\\
M_{11,r}&p^2-1&p+1\\
M_{12,r}&p^2&p
\end{array}
\]
\end{thm}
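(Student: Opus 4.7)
\smallskip

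\textbf{Approach and reduction.} The plan is to reduce the theorem to a per-lattice computation of $\ed(\Diag(M);p)$ for each $M \in \LL$, and then establish matching upper and lower bounds via Corollary~\ref{cor.lattice}. Since $k = \pcl{k}$, Corollary~\ref{cor.absoluteED} gives $\ed_k(T) = \ed_k(T;p)$. By the Krull--Schmidt theorem for $\ZZ_{(p)}G$-lattices \cite[30.6]{CR}, combined with the Heller--Reiner classification \cite[34.32]{CR}, $X(T)_{(p)}$ decomposes uniquely into indecomposable summands, each lying in the genus of a unique $M \in \LL$. Proposition~\ref{prop.equalED} ensures that essential $p$-dimension is a genus invariant, so each summand may be replaced by the corresponding $M \in \LL$; additivity (Theorem~\ref{thm.additive}) then reduces the problem to computing $\ed(\Diag(M);p)$ for every individual $M \in \LL$.

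\smallskip

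\textbf{Upper bounds.} By Corollary~\ref{cor.lattice}, $\ed(\Diag(M);p) + \rk M$ equals the minimum rank of a permutation $\ZZ_{(p)}G$-lattice admitting a surjection onto $M_{(p)}$. The indecomposable permutation $\ZZ_{(p)}G$-lattices are $\ZZ_{(p)}$, $\ZZ_{(p)}H$, $\ZZ_{(p)}G$, of ranks $1$, $p$, $p^2$. Each row of $\LL$ already displays $M$ explicitly as a quotient of one of $\ZZ$, $\ZZ H$, $\ZZ G$, $\ZZ G \oplus \ZZ$, $\ZZ G \oplus \ZZ H$, and subtracting $\rk M$ from the rank of this cover reproduces the corresponding entry of the table; this gives the upper bound by inspection in every case.

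\smallskip

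\textbf{Lower bounds and the main obstacle.} Writing a candidate permutation cover as $P = \ZZ_{(p)}^a \oplus (\ZZ_{(p)}H)^b \oplus (\ZZ_{(p)}G)^c$, the three $\QQ G$-simples $\QQ$, $\QQ(\zeta_p)$, $\QQ(\zeta_{p^2})$ occur in $\ZZ$, $\ZZ H$, $\ZZ G$ with multiplicities $(1,0,0)$, $(1,1,0)$, $(1,1,1)$ respectively. Computing $M \otimes \QQ$ from each presentation and comparing multiplicities yields linear constraints on $a,b,c$; these rational constraints already produce the desired lower bound for $M_1, \dotsc, M_5, M_7, M_8$. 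The main obstacle is the remaining cases $M_6$, $M_{9,r}$, $M_{10,r}$, $M_{11,r}$, $M_{12,r}$, where the rationally-minimal cover has the same rank as $M$, so any hypothetical surjection would be an isomorphism of $\ZZ_{(p)}G$-lattices, which must be ruled out by an integral or mod-$p$ invariant. The natural candidates are the $G$-fixed submodule together with its image under multiplication by $\delta_G$ (which should detect the relation $\delta_G - p$ defining $M_6$), and the $\FF_p G$-module structure of $M/pM$, or of its $H$-fixed points as an $\FF_p[G/H]$-module (which should detect the extra $\delta_H$ or $(1-h)^r$ terms in the $M_9$--$M_{12}$ families). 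Verifying that these invariants distinguish $M_{(p)}$ from every permutation $\ZZ_{(p)}G$-lattice of equal rank, uniformly in the parameter $r$, is the technical heart of the proof.
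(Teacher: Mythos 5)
Your reduction (Krull--Schmidt for $\ZZ_{(p)}G$-lattices, genus invariance via Proposition~\ref{prop.equalED}, additivity) and your upper bounds agree with the paper's, and your rational-multiplicity lower bound does dispose of $M_1,\dots,M_5,M_7,M_8$. But the remaining cases are where the content of the theorem lies, and there your proposal has a genuine gap on two counts. First, you explicitly leave the decisive step unverified (``the technical heart of the proof''), so nothing is actually proved for $M_6$ and $M_{9,r},\dots,M_{12,r}$. Second, and more seriously, you have mis-stated what must be ruled out. For $M_{9,r}$ the claimed answer is $\ed=p$, i.e.\ a minimal cover of rank $p^2+p$; the permutation lattices $\ZZ_{(p)}G\oplus\ZZ_{(p)}^a$ realize every rank $p^2+a$ with $0\le a\le p-1$ and have the correct rational isotypic multiplicities, so you must exclude surjections from covers of all these intermediate ranks, not merely an isomorphism from a cover of equal rank. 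The same issue arises for $M_{10,r}$ and $M_{12,r}$, and for $M_{11,r}$ (of rank $p^2-1$) the rationally minimal cover already has rank $p^2>\rk M$ because any cover must contain a $\ZZ_{(p)}G$ summand to hit $\QQ(\zeta_{p^2})$ -- so your ``would be an isomorphism'' dichotomy does not even apply there. An invariant that merely distinguishes $M_{(p)}$ from permutation lattices of equal rank cannot close this gap.

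The paper's route through these cases is different and worth comparing. It invokes the structure theory behind Theorem~\ref{thm:lowerbound}: setting $C=\Split(T[p])$, a permutation cover of minimal rank decomposes into exactly $\rk C$ transitive summands, each of rank a power of $p$, because a minimal representation whose restriction to $C\simeq\mu_p^{\rk C}$ is faithful is a direct sum of exactly $\rk C$ irreducibles, each of $p$-power dimension by Proposition~\ref{prop3.2}. For $M_{9,r},\dots,M_{12,r}$ one computes $\rk C=2$, so together with the upper bound $p^2+p$ the only candidate ranks are $p^2+1$ and $p^2+p$; the value $p^2+1$ would force a rank-one (hence $G$-fixed) summand whose character is nontrivial on $C$, and Lemma~\ref{lem:fixedCharacters} (using Hiller's computation of $M_{i,r}^G$) shows no such character exists. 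The case $M_6$ is settled by Theorem~\ref{thm.ed=1}. If you want to salvage your approach, you need at minimum an argument excluding surjections $\ZZ_{(p)}G\oplus\ZZ_{(p)}^a\to M_{(p)}$ for all $0\le a\le p-1$, uniformly in $r$; the paper's fixed-character lemma is exactly such an argument in disguise, since the image of each $\ZZ_{(p)}$ summand lands in $M^G$.
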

\begin{proof}[{Proof of Proposition \ref{prop.splitsquare}}]
We will assume $p>2$ in the sequel.
For $p=2$ the Theoerem is still true but some easy additional arguments are needed which we leave out here.

The essential $p$-dimension of tori corresponding to $M_1\ldots,M_6$ easily follows from the discussion in 
section~\ref{sect.ed<=1}.
Let $M$ be one of the lattices $M_7,\ldots,M_{12,r}$ and $T=\Diag M$ the corresponding torus.
We will determine the minimal rank of a permutation $\ZZ G$-lattice $P$ admitting a homomorphism $P \to M$ which becomes surjective after localization at $p$.
Then we conclude $\ed(T;p)=\rk P-\rk M$ with Corollary~\ref{cor.lattice}.

We have the bounds
\begin{equation}
\rk M\le \rk P\le p^2 \;(\mbox{or }p^2+p),
\end{equation}
where the upper bound holds since every $M$ is given as a quotient of $\ZZ G$ (or $\ZZ G \oplus \ZZ H$).
Let $C=\Split (T[p])$ the finite constant group used in the proof of Theorem~\ref{thm:MainTheorem1}. 
The rank of $C$ determines exactly the number of direct summands into which $P$ decomposes.
Moreover each indecomposable summand has rank a power of $p$.

As an example, we show how to find $C$ for $M=M_{11,r}$:
The relations $g^j\cdot(\epsilon-(1-h)^r);\delta_H$ are written out as
\[
\sum_{i=0}^{p-1}g^{pi+j}-\sum_{\ell=0}^r\binom{r}{\ell}(-1)^\ell h^{\ell+j},\; 0\leq j \leq p-1;\quad \sum_{i=0}^{p-1}h^i
\]
and the $\sep{k}$-point of the torus are
\[
\everymath{\displaystyle}
\begin{array}{ll}
T(\sep{k})=&{\Big\{} (t_0,\dotsc,t_{p^2-1},s_0,\dotsc,s_{p-1})\mid\\
&\prod_{i=0}^{p-1} t_{pi+j}=\prod_{\ell=0}^r s_{\ell+j}^{(-1)^\ell \binom{r}{\ell}},\; 0\leq j \leq p-1;\quad\prod_{i=0}^{p-1} s_i=1 {\Big\}} 
\end{array}
\]
and $C$ is the constant group of fixed points of the $p$-torsion $T[p]$:
\[
C(k)=\left\{\left(\zeta_p^i,\dotsc,\zeta_p^i,\zeta_p^j,\dotsc,\zeta_p^j\right)\mid\ 0\leq i,j \leq p-1\right\} \simeq \mu_p^2.
\]
(Note that the primitive $p$th root of unity $\zeta_p$ is in $k$ by our assumption that $k$ is $p$-closed).
For other lattices this is similar: $C$ is equal to $\Split(\Diag(P)[p])\simeq \mu_p^r$ where $M$ is presented as a quotient $P/N$ of a permutation lattice $P$ (of minimal rank) as in $\LL$ and where $r$ denotes the number of summands in a decomposition of $P$. 
\[
\begin{array}{c|c|c|c}
M &\mbox{rank $C$}&\mbox{rank $M$}&\mbox{possible $\rk P$}\\
\hline
M_7&1&p^2-p&p^2\\
M_8&1&p^2-p+1&p^2\\
M_{9,r}&2&p^2&p^2+1\mbox{ or }p^2+p\\
M_{10,r}&2&p^2+1&p^2+1\mbox{ or }p^2+p\\
M_{11,r}&2&p^2-1&p^2+1\mbox{ or }p^2+p\\
M_{12,r}&2&p^2&p^2+1\mbox{ or }p^2+p
\end{array}
\]
We need to exclude the possibility $\rk P=p^2+1$ for the lattices $M=M_{9,r},\ldots,M_{12,r}$.
We can only have the value $p^2+1$ if there exists a character in $M$ which is fixed under the Galois group and nontrivial on $C$. 
The following Lemma \ref{lem:fixedCharacters} tells us, that such characters do not exist in either 
case. 
Hence the minimal dimension of a $p$-faithful representation of all these tori is $p^2+p$.
\end{proof}

\begin{lem}
\label{lem:fixedCharacters}
For $i=9,\ldots,12$ and $r\geq 1$ every character $\chi \in M_{i,r}$ fixed under $G$ has trivial restriction to $C$.
\end{lem}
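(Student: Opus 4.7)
The plan is to reformulate the claim lattice-theoretically and then analyze the short exact sequence defining each $M_{i,r}$. Since $G=\langle g\rangle$ is cyclic, $\Delta(M)=(g-1)M$ for every $\ZZ G$-lattice $M$, so $X(C)\cong M/(pM+(g-1)M)$. The claim therefore becomes the inclusion $M^G\subseteq pM+(g-1)M$ for $M=M_{i,r}$ with $i\in\{9,10,11,12\}$.

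I will first observe that $P^G=\ZZ\delta_G\oplus\ZZ\delta_H\subseteq pP+(g-1)P$ already, where $P=\ZZ G\oplus\ZZ H$ is the permutation cover from $\LL$. Indeed, writing $u=g-1$, Frobenius in $\FF_p[u]$ gives $\delta_G=\sum_{i=0}^{p^2-1}(1+u)^i\equiv u^{p^2-1}\pmod p$, and similarly $\delta_H\equiv(h-1)^{p-1}\pmod p$ in $\ZZ H$, both clearly in $(g-1)P$. Next I will apply the long exact cohomology sequence of $0\to N\to P\to M\to 0$. Since $\ZZ G$ is free and $\ZZ H$ is a $\ZZ G$-permutation module, $H^1(G,P)=0$, and the cokernel of $P^G\to M^G$ is identified with $H^1(G,N)$. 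The task then reduces to producing, for each generator of $H^1(G,N)$, a representative in $P$ that is $G$-fixed modulo $N$ and lies in $pP+(g-1)P$.

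For $M_{9,r}$ and $M_{11,r}$ a short annihilator computation will give $N\cong\ZZ H$ and $N\cong\ZZ H\oplus\ZZ$ respectively, both permutation (up to a trivial summand), so $H^1(G,N)=0$ and no additional lift is needed. For $M_{10,r}$ and $M_{12,r}$ the generator $v=(\epsilon(1-g),-(1-h)^{r+1})$ has annihilator $\delta_H\ZZ G$, so the cyclic summand of $N$ is isomorphic to $\ZZ[\zeta_p]$ (with $g$ acting as $\zeta_p$), and $H^1(G,N)$ picks up a summand $\ZZ[\zeta_p]/(\zeta_p-1)\cong\FF_p$ generated by the class of $v$ itself. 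The hard part will be here, and the key will be the candidate lift $(a,b)=(-\epsilon,(1-h)^r)\in P$: one verifies $(g-1)(a,b)=v\in N$, so its image in $M$ is $G$-fixed, and $(a,b)\in pP+(g-1)P$ because $\epsilon\equiv(g-1)^{p^2-p}\pmod p$ in $\ZZ G$ and $(1-h)^r\in(g-1)\ZZ H$ for $r\ge 1$ --- which is precisely where the hypothesis on $r$ enters. Combining this with the first step then yields $M^G\subseteq pM+(g-1)M$ in all four cases.
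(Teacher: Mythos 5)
Your argument is correct, and it reaches the conclusion by a genuinely more self-contained route than the paper. The paper simply quotes Hiller's computation of $H^0(G,M_{i,r})$ together with explicit representatives of the fixed characters ($\ZZ\delta_H$ in $M_{9,r}$, $\ZZ(\epsilon-(1-h)^r)$ in $M_{12,r}$, both in $M_{10,r}$, nothing in $M_{11,r}$) and then observes that these are killed by restriction to $C$. You instead derive the structure of $M_{i,r}^G$ directly from the presentation $0\to N\to P\to M_{i,r}\to 0$. I checked the individual steps: the reformulation $X(C)\cong M/(pM+(g-1)M)$ is the right one for the way the lemma is used in the theorem; $H^1(G,P)=0$ since $P$ is permutation, so the cokernel of $P^G\to M_{i,r}^G$ is indeed $H^1(G,N)$; the annihilator computations give $N\cong\ZZ H$, $\ZZ H\oplus\ZZ$, $\ZZ[\zeta_p]$, $\ZZ[\zeta_p]\oplus\ZZ$ in the cases $i=9,11,10,12$ respectively (using that $\operatorname{Ann}_{\ZZ G}(\epsilon)=(g^p-1)$ and $\operatorname{Ann}_{\ZZ G}(\epsilon(1-g))=(1+g+\dots+g^{p-1})\ZZ G$); and the lift $(-\epsilon,(1-h)^r)$ does satisfy $(g-1)(-\epsilon,(1-h)^r)=v$, with $[v]$ generating $H^1(G,N)\cong\ZZ[\zeta_p]/(\zeta_p-1)\cong\FF_p$. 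Note that $-(-\epsilon,(1-h)^r)=(\epsilon,-(1-h)^r)$ is exactly the paper's representative $\epsilon-(1-h)^r$, and your cokernel computation recovers Hiller's ranks $1,2,0,1$ for $i=9,10,11,12$. The concluding step --- $\delta_G\equiv(g-1)^{p^2-1}$, $\epsilon\equiv(g-1)^{p^2-p}$, $\delta_H\equiv(h-1)^{p-1}$ modulo $p$, and $(1-h)^r\in(g-1)\ZZ H$ precisely because $r\geq 1$ --- is the same mod-$p$ Frobenius observation that underlies the paper's one-line verification. What your version buys is independence from the external reference and an explicit identification of the generators of $M_{i,r}^G$; what it costs is length.
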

\begin{proof}
By \cite{Hi} the cohomology group $H^0(G,M_{i,r})=M_{i,r}^G$ of $G$-fixed points in $M_{i,r}$ is trivial for $i=11$, has rank $1$ for $i=9,12$ and rank $2$ for $i=10$, respectively. 
They are represented by $\ZZ \delta_H$ in $M_{9,r}$, by $\ZZ (\epsilon - (1-h)^r)$ in $M_{12,r}$ and by $\ZZ (\epsilon - (1-h)^r)\oplus \ZZ\delta_H$ in $M_{10,r}$, respectively. 
Since all these characters are trivial on \[
C=\Split(\Diag(\ZZ G\oplus \ZZ H)[p]),
\]
the claim follows.
\end{proof}

\section*{Acknowledgments}
The authors are grateful to A. Auel, A. Merkurjev and A. Vistoli for helpful comments and conversations.

\end{document}